\theoremstyle{plain}
    \newtheorem{theorem}{Theorem}
    \newtheorem{lemma}{Lemma}
    \newtheorem{proposition}{Proposition}
\theoremstyle{definition} 
    \newtheorem{remark}{Remark}
\numberwithin{equation}{section}
\newcommand{\eps}{\varepsilon}
\newcommand{\half}{\frac{1}{2}}
\newcommand{\RR}{\mathbb{R}}
\newcommand{\rd}{\mathrm{d}}
\newcommand{\equilibrium}{\mathcal{M}}
\newcommand{\Lop}{\mathcal{L}}
\newcommand{\Amat}{\mathsf{A}}
\newcommand{\Dmat}{\mathsf{D}}
\newcommand{\Mmat}{\mathsf{M}}
\newcommand{\Kmat}{\mathsf{K}}
\newcommand{\Cmat}{\mathsf{C}}
\newcommand{\Lmat}{\mathsf{L}}
\newcommand{\Pmat}{\mathsf{P}}
\newcommand{\Imat}{\mathsf{I}}
\newcommand{\vecf}{\mathsf{f}}
\newcommand{\FT}{\mathsf{F}}
\newcommand{\IFT}{\mathsf{F}^{-1}}
\newcommand{\FL}{\mathsf{L}_s}
\newcommand{\tr}{\tilde{\rho}}
\newcommand{\feven}{\tilde{f}}
\newcommand{\average}[1]{\left\langle#1\right\rangle}
\title{An asymptotic preserving scheme for L\'{e}vy-Fokker-Planck equation with fractional diffusion limit}
\date{}
\author{Wuzhe Xu and Li Wang \thanks{School of Mathematics, University of Minnesota, Twin cities, MN 55455. W.Xu (xu000355@umn.edu), L.Wang (wang8818@umn.edu).}}
\begin{document}
\maketitle
\begin{abstract}
    In this paper, we develop a numerical method for the L\'evy-Fokker-Planck equation with the fractional diffusive scaling. There are two main challenges. One comes from a two-fold nonlocality, that is, the need to apply the fractional Laplacian operator to a power law decay distribution. The other arises from   long-time/small mean-free-path scaling, which introduces stiffness to the equation. To resolve the first difficulty, we use a change of variable to convert the unbounded domain into a bounded one and then apply the Chebyshev polynomial based pseudo-spectral method. To treat the multiple scales, we propose an asymptotic preserving scheme based on a novel micro-macro decomposition that uses the structure of the test function in proving the fractional diffusion limit analytically. Finally, the efficiency and accuracy of our scheme are illustrated by a suite of numerical examples.
\end{abstract}

\section{Introduction}
We consider the L\'{e}vy-Fokker-Planck (LFP) equation 
\begin{equation} \label{eqn:VLFP}
\begin{cases}{}
\partial_t f + v \cdot \nabla_x f = \nabla_v \cdot (v f) - (-\Delta_v)^s f :=\Lop^s (f)\,, \qquad s\in (0,1)\,,
\\ f(0,x,v) = f_{in}(x,v)\,,
\end{cases}
\end{equation}
where $f(t,x,v): (0, \infty) \times \RR^d \times \RR^d  \mapsto  \RR^+$ is the distribution function of a cloud of particles in plasma, which undergoes a free transport describing by the convection on the left hand side, and an interaction with the background, described by the LFP operator on the right. Here $(-\Delta_v)^s$ is the fractional Lapacian operator that models the L\'evy processes at the microscopic level. Among various  equivalent ways of defining the fractional Laplace operator \cite{kwasnicki2017ten}, we only mention the one that will be used throughout the paper:   
\begin{equation}
    (-\Delta_v)^s f(v) : = C_{s,d} \text{P.V.} \int_{\mathbb{R}^d} \frac{f(v)-f(w)}{|v-w|^{d + 2s}} \rd w,
\label{def_fl}
\end{equation}
where $P.V.$ denotes Cauchy principal value and $C_{s,d} = \frac{4^s \Gamma(d/2+s)}{\pi^{d/2}|\Gamma(-s)|}$.

When $s=1$, the right hand side of \eqref{eqn:VLFP} reduces to the to classical Fokker-Planck operator that models the Brownian motion of the microscopic particles. In contrast, $s\in (0,1)$ allows particles to make long jumps at the microscopic scale, and hence leads to the nonlocal effect at the mesoscopic scale. Consequently, as opposed to the Gaussian Maxwellian to the Fokker-Planck operator, the L\'{e}vy-Fokker-Planck operator admits an equilibrium that has a fat tail. More precisely, there is a unique normalized distribution $\equilibrium(v)$, such that \cite{gentil2008levy}:
\begin{equation} \label{eqn:equilibrium}
\Lop^s(\equilibrium) = 0, \quad \int_{\RR^d} \equilibrium (v) \rd v = 1, \quad \equilibrium(v) \sim \frac{C}{|v|^{d+2s}} ~\text{as}~ |v| \rightarrow \infty\,.
\end{equation}
Moreover, the convergence toward the equilibrium is shown to be exponential in the sense of relative entropy. In particular, let $\Phi$ be a convex smooth function, and define the relative entropy to the equilibrium as 
\begin{equation*}
   H^\Phi(f|\equilibrium) :=\int \Phi(f) \equilibrium  \rd v -\Phi\left(\int f \equilibrium \rd v \right)\,,
\end{equation*}
then from the Theorem 2 in \cite{gentil2008levy}, one has 
\begin{equation*}
H^\Phi (f|\equilibrium)\left(\frac{f(t)}{\equilibrium}\right) \leq e^{-\frac{t}{C}} H^\Phi (f_{in}|\equilibrium)\left(\frac{f(t)}{\equilibrium}\right), \quad t \geq 0\,,
\end{equation*}
for some constant $C$.

In the long time and small mean free path scaling, that is, rescaling the time and space as 
\begin{equation} \label{scaling}
x  \mapsto \eps x, \quad t\mapsto \eps^{2s} t,
\end{equation}
equation \eqref{eqn:VLFP} can be rewritten into the following form:
\begin{equation} \label{eqn:111}
\begin{cases}{}
\eps^{2s}\partial_t f^\eps + \eps v \cdot \nabla_x f^\eps = \mathcal L^s (f^\eps) ,
\\ f^\eps(0,x,v) = f_{in}(x,v)\,.
\end{cases}
\end{equation}
Sending $\eps$ to zero will give rise to a fractional diffusion equation for the density of particles. More precisely, we cite the following theorem from \cite{cesbron2012anomalous}.
\begin{theorem} 
Assume that $f_0 \in L^2 (\RR^N, \equilibrium (v)^{-1}\rd v \rd x)$, where $\equilibrium(v)$ is the unique normalized equilibrium distribution that satisfies \eqref{eqn:equilibrium}.
Then, up to a subsequence, the solution $f^\eps$ of \eqref{eqn:111} converges weakly in $L^\infty(0, T; L^2(\RR^{2d}, \equilibrium(v)^{-1} \rd v \rd x))$ to $\rho(t,x) \equilibrium (v)$ as $\eps \rightarrow 0$, where $\rho(t,x)$ solves
\begin{equation}\label{eqn: limit_system}
\begin{cases}{}
\partial_t \rho + (-\Delta_x)^s \rho= 0\,,
\\ \rho(0,x) = \rho_{in}(x) := \int_{\RR^d} f_{in}(x,v)\rd v\,.
\end{cases}
\end{equation}
\end{theorem}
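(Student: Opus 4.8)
The plan is to combine uniform dissipation estimates with a weak formulation tested against a carefully designed, $\eps$-dependent profile, so that the anomalous symbol $|k|^{2s}$ emerges from the fat tail of $\equilibrium$. First I would derive uniform-in-$\eps$ bounds in the weighted space $L^2(\equilibrium^{-1}\rd v\rd x)$. Multiplying \eqref{eqn:111} by $f^\eps\equilibrium^{-1}$ and integrating in $(x,v)$, the transport term vanishes by its skew-symmetry in $x$, leaving $\frac{\eps^{2s}}{2}\frac{\rd}{\rd t}\|f^\eps\|^2_{L^2(\equilibrium^{-1})}=\iint \Lop^s(f^\eps)\,f^\eps\equilibrium^{-1}\,\rd v\rd x\le 0$ by dissipativity of $\Lop^s$. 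This yields a uniform bound on $\|f^\eps\|_{L^\infty(0,T;L^2(\equilibrium^{-1}))}$ together with a bound on $\eps^{-2s}$ times the time-integrated dissipation. Since the dissipation controls the distance of $f^\eps$ to the equilibrium manifold $\{\rho\equilibrium\}$, weak-$*$ compactness gives a subsequence with $f^\eps\rightharpoonup f$, and the dissipation bound forces $f(t,x,v)=\rho(t,x)\equilibrium(v)$. It then remains to identify the equation solved by $\rho$.

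Next I would pass to the limit in the weak form
\[
\iiint f^\eps\big(-\eps^{2s}\partial_t\psi^\eps-\eps\,v\cdot\nabla_x\psi^\eps-\Lop^{s,*}\psi^\eps\big)\,\rd v\rd x\rd t=\eps^{2s}\iint f_{in}\,\psi^\eps(0)\,\rd v\rd x,
\]
where $\Lop^{s,*}\psi=-v\cdot\nabla_v\psi-(-\Delta_v)^s\psi$ is the formal adjoint. The decisive ingredient is the choice of $\psi^\eps$. After a Fourier transform in $x$, for each frequency $k$ I would construct a test profile $\hat\psi^\eps(k,v)$ as the principal eigenfunction branching from the constant, solving $\Lop^{s,*}\hat\psi^\eps-i\eps\,(v\cdot k)\hat\psi^\eps=-\lambda^\eps(k)\hat\psi^\eps$ with the normalization $\int\hat\psi^\eps\equilibrium\,\rd v=1$, so that $\hat\psi^\eps\to1$ and $\lambda^\eps(k)\to0$ as $\eps\to0$. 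Testing the Fourier-transformed equation against $\hat\psi^\eps$ and setting $m^\eps(t,k)=\int\hat f^\eps\hat\psi^\eps\,\rd v$ then collapses the kinetic equation to the exact scalar ODE $\eps^{2s}\partial_t m^\eps+\lambda^\eps(k)\,m^\eps=0$, whose initial datum satisfies $m^\eps(0,k)\to\hat\rho_{in}(k)$.

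The heart of the matter is to show $\eps^{-2s}\lambda^\eps(k)\to c\,|k|^{2s}$ for a universal $c>0$, which turns the ODE into $\partial_t\hat\rho+c|k|^{2s}\hat\rho=0$, the Fourier form of \eqref{eqn: limit_system}. Integrating the eigenvalue equation against $\equilibrium$ and using $\int\Lop^{s,*}\hat\psi^\eps\,\equilibrium\,\rd v=\int\hat\psi^\eps\,\Lop^s\equilibrium\,\rd v=0$ gives the solvability identity $\lambda^\eps(k)=i\eps\int(v\cdot k)\hat\psi^\eps\,\equilibrium\,\rd v$. A naive expansion $\hat\psi^\eps=1+\eps\hat\psi_1+\cdots$ would require inverting $\Lop^{s,*}$ against $v\cdot k$, but $v\cdot k\notin L^2(\equilibrium\,\rd v)$ exactly because $\int|v|^2\equilibrium\,\rd v=\infty$ for $s\in(0,1)$; this failure of the classical Chapman--Enskog closure is the origin of the anomalous scaling. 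Instead I would estimate the singular moment by a self-similar rescaling $v=u/\eps$: the measure contributes $\eps^{-d}$, the tail $\equilibrium(u/\eps)\sim C\eps^{d+2s}|u|^{-d-2s}$, while the rescaled profile stays $O(1)$ since $(-\Delta_v)^s$ picks up $\eps^{2s}$ and $i\eps(v\cdot k)\mapsto i(u\cdot k)$ is scale invariant. This converts the prefactor $\eps$ and the tail exponent $d+2s$ into $\lambda^\eps(k)=i\,\eps^{2s}\,I(k)+o(\eps^{2s})$ with $\mathrm{Re}\,I(k)=c|k|^{2s}$, the imaginary part cancelling by the $v\mapsto-v$ symmetry of $\equilibrium$.

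I expect the main obstacle to be precisely this last step: constructing the eigenpair $(\lambda^\eps,\hat\psi^\eps)$ with uniform-in-$k$ estimates, proving the quantitative asymptotics $\eps^{-2s}\lambda^\eps(k)=c|k|^{2s}+o(1)$ locally uniformly in $k$, and upgrading the formal limit of $m^\eps$ to a rigorous one using the weighted $L^2$ bounds (dominated convergence and uniform integrability against $\equilibrium^{-1}$ to justify $\int\hat f^\eps\hat\psi^\eps\,\rd v\to\hat\rho$ and the passage to the limit in the ODE). Once $m^\eps(t,k)=m^\eps(0,k)\exp(-\eps^{-2s}\lambda^\eps(k)\,t)\to\hat\rho_{in}(k)\exp(-c|k|^{2s}t)$, inverting the Fourier transform identifies $\rho$ as the solution of \eqref{eqn: limit_system}. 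Finally, since \eqref{eqn: limit_system} is uniquely solvable, the limit is independent of the subsequence and the whole family $f^\eps$ converges, completing the argument.
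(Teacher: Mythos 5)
Your overall frame --- weighted $L^2(\equilibrium^{-1})$ energy estimate, weak compactness, reduction to a scalar ODE per Fourier mode via adjoint test functions --- is coherent, but the proof has a genuine gap exactly where you yourself locate it, and the tools you propose cannot close it as stated. The existence of the eigenpair $(\lambda^\eps(k),\hat\psi^\eps(k,\cdot))$ ``branching from the constant'' is not obtainable by perturbation theory: the perturbation $i\eps(v\cdot k)$ is multiplication by a function with no control in any space built on $\equilibrium$, since $\int |v|^2\equilibrium\,\rd v=\infty$ for all $s\in(0,1)$ (and even $\int |v|\equilibrium\,\rd v=\infty$ for $s\le 1/2$, so your solvability integral $\int (v\cdot k)\hat\psi^\eps\equilibrium\,\rd v$ is not absolutely convergent and only makes sense through the oscillation of $\hat\psi^\eps$). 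The self-similar rescaling you then invoke is a formal scaling argument: it can identify the exponent $2s$ but not the constant, and the theorem requires the coefficient in \eqref{eqn: limit_system} to be exactly $1$; pinning that down along your route would additionally require the precise tail constant of $\equilibrium$ (determined by \eqref{eqn:equilibrium}) and a Tauberian argument relating it to the cusp of $\hat\equilibrium$ at the origin, none of which is in the proposal. So the ``main obstacle'' you flag is not a technical afterthought --- it is the entire proof, and it is left open.

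What you are missing is that for the L\'evy--Fokker--Planck operator your eigenproblem has an exact, explicit solution, so no perturbation expansion or tail asymptotics are needed. Take $\hat\psi^\eps(k,v)=e^{-i\eps k\cdot v}$. Then $-v\cdot\nabla_v\hat\psi^\eps=i\eps(v\cdot k)\hat\psi^\eps$, so the drift cancels the transport perturbation identically, while the plane wave diagonalizes the fractional Laplacian, $(-\Delta_v)^s\hat\psi^\eps=\eps^{2s}|k|^{2s}\hat\psi^\eps$. Hence
\begin{equation*}
\Lop^{s,*}\hat\psi^\eps-i\eps(v\cdot k)\hat\psi^\eps=-\eps^{2s}|k|^{2s}\hat\psi^\eps\,,
\end{equation*}
i.e.\ $\lambda^\eps(k)=\eps^{2s}|k|^{2s}$ exactly: real, with coefficient $1$, for every $\eps$ and $k$. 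Your ODE then reads $\partial_t m^\eps+|k|^{2s}m^\eps=0$ with no expansion at all, and the limit follows from $m^\eps(t,k)=\int \hat f^\eps e^{-i\eps k\cdot v}\,\rd v\to\hat\rho(t,k)$ using your a priori bounds. Once this is observed, your argument ceases to be a different proof: the modulated exponential $e^{-i\eps k\cdot v}$ is precisely the Fourier transform in $x$ (up to sign conventions) of the test function $\phi(x+\eps v)$, which is exactly the device used in the proof of this theorem in \cite{cesbron2012anomalous} that the paper points to, and the same structural identities for functions of $x+\eps v$ --- $\eps\partial_x\eta=\partial_v\eta$ and $(-\Delta_v)^s\eta=\eps^{2s}(-\Delta_x)^s\eta$, cf.\ \eqref{eqn032} --- are what the paper's own micro--macro decomposition is built on. Two smaller points: your compactness step still needs a fractional Poincar\'e-type inequality to force the weak limit onto $\{\rho\equilibrium\}$ (not automatic for a fat-tailed equilibrium), and the theorem only asserts subsequential convergence, so your final uniqueness upgrade, while correct, is not required.
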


In the classical case (i.e., $s$=1) when $\equilibrium$ is a fast decaying function such as Gaussian, one rescales $t$ as $t \mapsto \eps^2 t$ and the resulting macroscopic equation is the diffusion equation \cite{poupaud2000parabolic}:
\[
\partial_t \rho + \nabla_x \cdot ( D \nabla_x \rho)  = 0, 
\]
where $D$ is the diffusion matrix 
\[
D = \int v \otimes  v \equilibrium \rd v\,.  
\]
Clearly the fat tail equilibrium \eqref{eqn:equilibrium} renders the above integral unbounded and therefore invalids the classical diffusion limit. Conversely, the anomalous scaling \eqref{scaling} is necessary. Similar scaling has also been investigated in the framework of linear Boltzmann equation, see \cite{mellet2011fractional, abdallah2011fractional} for a reference.

Numerically computing \eqref{eqn:111} has two major challenges. One comes from necessity to apply the fractional Laplacian operator to a slow decay function, in which case one needs to consider {\it infinite} computational domain, since any truncation would lose the important information carried by the tail and leads to erroneous result. To this end, two kinds of numerical methods have been developed to approximate the fractional Laplacian operator. One hinges upon the integral form of $(-\Delta_v)^s$ in \eqref{def_fl} and splits the infinite domain into a computable body part and a compensate tail part that can be integrated exactly, see especially \cite{huang2014numerical, huang2016finite}. This method heavily relies on analytic expression of the tail, which is not known for our case except when the solution has reached equilibrium. But since our goal is to simulate the dynamics, this method cannot be adopted without a major modification. The other uses the spectral method with {\it nonlocal} basis. For instance, the Hermite spectral method is developed in \cite{mao2017hermite} which takes advantage of the fact that Hermite polynomials are invariant under Fourier transform and therefore can be efficiently computed when taking the fractional Laplacian. In \cite{sheng2020fast}, mapped Chebyshev polynomials are used as basis, then the fractional Laplacian is calculated via the celebrated Dunford-Taylor formula, and therefore is very efficient in higher dimensions. Another approach, proposed in \cite{cayama2019pseudospectral}, also starts from the Chebyshev polynomial basis, but it then uses a change of variable and even extension of the function, and therefore boils down the problem to computing the Fractional Laplacian of the Fourier basis on a finite domain, which can be approximated numerically with high accuracy. All these spectral methods were developed to address the nonlocality of the fractional Lapalcian operator, but when they apply to a slow decay function, additional nonlocality is introduced and a large number of basis are expected in order to meet a certain accuracy. See also \cite{lischke2020fractional, bonito2018numerical} for a review on numerical issues related to fractional diffusion. For our problem, we find that the approach in \cite{cayama2019pseudospectral} gives the best result within the computational budget when some tuning parameters are chosen appropriately. 

Another challenge arises from the diffusive scaling, which introduces stiffness to the system. Our goal is to develop a numerical solver with uniform performance across different regimes, i.e., $\eps$ varies in magnitude by several orders. In particular, the scheme for \eqref{eqn:111} is expected to  reduce to the solver for \eqref{eqn: limit_system} automatically with unresolved mesh. This is the so-called asymptotic preserving (AP) scheme. There has been an extensive study on the AP scheme for kinetic equations with various scalings, see \cite{jin2010asymptotic, hu2017asymptotic} for a review. When it comes to anomalous diffusive scaling, we cite specially \cite{wang2016asymptotic, crouseilles2016numerical, CHL16, wang2019asymptotic}, which all deal with the linear Boltzmann type equation. These works and the current paper share the same equilibrium \eqref{eqn:equilibrium} and diffusion limit \eqref{eqn: limit_system}, but the different nature between Fokker Planck type and Boltzmann type operator lead to very difference convergence mechanism and therefore hinders the application of the methods developed there. In fact, when the collision is of the Boltzmann type, a strong convergence toward the anomalous diffusion is obtained \cite{mellet2011fractional, abdallah2011fractional}, which plays a significant role in designing the AP method. On the contrary, with the L\'evy-Fokker-Planck operator in our case, only a weak convergence is available, which gives very limited knowledge on how the scheme can be constructed. Nevertheless, the special choice of the test function that aids the proof of Theorem 1 in \cite{cesbron2012anomalous} inspires our macro-micro decomposition, which sets the base of our AP scheme.

The rest of the paper is organized as follows. In the next section, we detail the computation of the collision operator $\Lop^s$ and combine it with backward Euler scheme to solve the spatially homogeneous case. Section 3 is devoted to the design of AP scheme, along with a rigorous proof of the AP property and a detailed guide in implementation. In section 4, extensive numerical examples are given to test the performance of our AP scheme. Finally the paper is concluded in section 5.

\section{Computation of the collision operator $\Lop^s$}
Aside from multiple scales that appear in equation \eqref{eqn:111}, the collision operator $\Lop^s$ itself poses severe computational difficulties, which is attributed to the non-locality of the operator $(-\Delta_v)^s$. As already mentioned in \cite{huang2014numerical, huang2016finite}, if $f$ is compactly supported in $v$, then the computation of $(-\Delta_v)^s f$ can be fulfilled by the Fourier transform. However, in our case, the interplay between the two operators in $\Lop^s$ leads to an equilibrium that has only a power law decay, see \eqref{eqn:equilibrium}. As a result, a more sophisticated treatment is needed for fractional Laplacian, as will be detailed in the following section. Here for notation simplicity, we assume $f$ only depends on $v$ throughout this section.

\subsection{Change of variable}
As mentioned above, one of the major difficulties of fractional Laplacian operator $(-\Delta_v)^s$ is its non-locality, especially when it applies to a slow decaying function like $\equilibrium$ in \eqref{eqn:equilibrium}. In order to treat the fat tail distribution on an unbounded domain, two approaches can be taken: one is to truncate the computation domain and introduce suitable boundary conditions; the other is to use a change of variable that maps the infinite domain into a finite one and then use a spectral method. In this paper, we take the latter approach, which is also termed as the rational spectral method. Below we briefly introduce the rational Chebyshev spectral method, more details can be found in \cite{cayama2019pseudospectral}.

Consider an algebraic mapping that maps the unbounded domain $(- \infty, \infty)$ into $[-1,1]$, i.e.\,,
\begin{equation*}
\xi = \frac{v}{\sqrt{L_v^2+v^2}} \in (-1,1) \Longleftrightarrow v = \frac{L_v \xi}{ \sqrt{1 - \xi^2}} \in (-\infty, \infty)\,,
\end{equation*}
where $L_v$ is a scaling parameter that is chosen for the sake of accuracy and mass conservation. In principle, it can be chosen adaptive as mentioned in \cite{ma2005hermite}, see also Remark~\ref{rmk: mass} for more discussion.
Take the Chebyshev polynomials of the first kind as the basis on $[-1,1]$,
\begin{equation} \label{eqn22}
T_k(\xi) = \cos (k \arccos (\xi))\,, \qquad \xi \in [-1,1]\,,
\end{equation}
then 
\[
TB_k(v) = T_k (\frac{v}{\sqrt{L_v^2+v^2}} )\,, \qquad  v\in \mathbb{R}\,,
\]
is the so-called Chebyshev rational polynomials on infinite domain. It has been pointed out in \cite{boyd1987spectral} that Chebyshev rational polynomials are appropriate for approximating the algebraically decay function, and has also been used in \cite{sheng2020fast} to compute the fractional diffusion operator. 

We now concentrate on the finite domain $[-1,1]$ and employ a further change of variable.
Let $q = \arccos (\xi) \in [0, \pi]$, then 
\begin{equation}\label{change_var}
    v = \frac{L_v \xi}{ \sqrt{1 - \xi^2}} = L_v \cot (q) ,
\end{equation}
and $T_k(\xi)$ in \eqref{eqn22} reduces to 
\[
T_k(\xi) = \cos(kq).
\]
Therefore, expanding $f(q)$ in terms of Chebyshev functions is equivalent to a cosine expansion. 

In the new variable $q \in [0, \pi]$, the fractional Laplacian can be rewritten as \cite{cayama2019pseudospectral} 
\[
(-\Delta_q)^{s} f(q)=
\left\{ \begin{array}{cc}
-\frac{1}{L_v \pi} \int_{0}^{\pi} \frac{f'(p)}{\cot (q)-\cot (p)} \rd p, \qquad & s=\half\,,
\\ \frac{C_{s,d}}{2L_v^{2s} s(1-2s)} 
\int_{0}^{\pi} \frac{\sin ^{2}(p) f''(p)+2 \sin (p) \cos (p) f'(p)}{|\cot (q)-\cot (p)|^{2s-1}} \rd p, \qquad & s \neq \half \,.
\end{array} \right.
\]
And one can reformulate \eqref{eqn:VLFP} into 
\begin{equation} \label{eqn:VLFP_cv}
\begin{cases}{}
\partial_t f + L_v \cot(q)  \partial_x f = f - \cos(q) \sin(q) \partial_q f(q) - (-\Delta_q)^s f :=\Lop^s_q (f)\,, 
\\ f(0,x,q) = f_{in}(x,q)\,,
\end{cases}
\end{equation}
where $f$ now depends on $t$, $x$ and $q\in[0,\pi]$. In the rest of this section, we will use \eqref{eqn:VLFP_cv} as our target equation, and discretize $q$ in the following way:
\begin{equation} \label{eqn:qpoint}
q_j = \frac{\pi(2j+1)}{2N_v}, \qquad 0\leq j \leq N_v-1\,,
\end{equation}
with $\Delta q = \frac{\pi}{N_v}$.

\subsection{Computation of $(-\Delta_q)^s f$}\label{sec:LFP}
To further ease the computation of $(-\Delta_q)^s f$, we conduct the even extension of $f$ at $\pi$, i.e., 
\[
\feven(q) = \left\{ \begin{array}{cc} f(q), \quad & q \in [0,\pi]\,, \\
  f(2\pi - q), \quad & q \in [\pi, 2\pi]\,. \end{array} \right.
\]
This way, according to the relation
\begin{equation*}
\int_0^{2 \pi} \feven(q) \cos(kq) \rd q = \int_0^{2 \pi} \feven(q) e^{-ikq} \rd q \,,
\end{equation*}
one can compute the coefficients of cosine expansion of $\feven$ via the Fast Fourier Transform. More specifically, denote 
\begin{equation} \label{ftildev}
\tilde{\vecf} = [\tilde f(q_0), \tilde{f}(q_1), \cdots,\tilde f(q_{N_v-1}),f(q_{2N_v-1}), \tilde f(q_{2N_v-2}), \cdots,  \tilde f(q_{N_v}) ]^T, 
\end{equation}
then its discrete Fourier transform takes the form 
\[
 f(q_j) = \sum_{k=-N_v}^{N_v-1}  \hat{\tilde f}_k  e^{ikq_j}\,, \qquad j = 0, \cdots, 2N_v-1\,, 
\]
where $\hat{\tilde f}_k = \frac{1}{2N_v} \sum_{j = -N_v}^{N_v-1} \tilde f(q_j) e^{-ik q_j}$, and hence
\[
(-\Delta_q)^s f(q_j) = \sum_{k=-N_v}^{N_v-1}  \hat{\tilde f}_k (-\Delta_q)^s e^{ikq_j}\,,\qquad  j = 0, \cdots, 2N_v-1\,.
\]
Then the question boils down to calculating $(-\Delta_q)^s e^{ikq_j}$, and we directly cite the result from \cite{cayama2019pseudospectral}. 
\begin{theorem}
Let $s \in (0,0.5) \cup (0.5,1) $, then
\begin{equation}\label{eqn: fl_fourier}
(-\Delta_q)^{s}\left(e^{i m q}\right)=\left\{\begin{array}{cc}
\frac{c_{s,1}|\sin (q)|^{2s-1}}{8 L^{2s} \tan \left( \pi s \right)} \sum_{l=-\infty}^{\infty} e^{i 2 l q}\left((1-2s) m^{2}-4 m l\right) \\
\times \frac{\Gamma\left(\frac{-1+2s}{2}+|l|\right) \Gamma\left(\frac{-1-2s}{2}+\left|\frac{m}{2}-l\right|\right)}{\Gamma\left(\frac{3-2s}{2}+|l|\right) \Gamma\left(\frac{3+2s}{2}+\left|\frac{m}{2}-l\right|\right)}, & m \text { even}\,, \\
i \frac{c_{s,1}|\sin (q)|^{2s-1}}{8 L^{2s}} \sum_{l=-\infty}^{\infty} e^{i 2 l q}\left((1-2s) m^{2}-4 m l\right)  \\
\times \operatorname{sgn}\left(\frac{m}{2}-l\right) \frac{\Gamma\left(\frac{-1+2s}{2}+|l|\right) \Gamma\left(\frac{-1-2s}{2}+\left|\frac{m}{2}-l\right|\right)}{\Gamma\left(\frac{3-2s}{2}+|l|\right) \Gamma\left(\frac{3+2s}{2}+\left|\frac{m}{2}-l\right|\right)}, & m \text { odd }\,.
\end{array}\right.
\end{equation}
Moreover, when $s=0.5$, 
\begin{equation*}
(-\Delta_q)^{0.5}\left(e^{i m q}\right)=\left\{\begin{array}{ll}
\frac{|m| \sin ^{2}(q)}{L} e^{i m q}, & m \text { even}\,, \\
\frac{i m}{L \pi}\left(\frac{-2}{m^{2}-4}-\sum_{l=-\infty}^{\infty} \frac{4 \operatorname{sgn}(l) e^{i 2 l q}}{(m-2 l)\left((m-2 l)^{2}-4\right)}\right), & m \text { odd }\,.
\end{array}\right.
\end{equation*}
\end{theorem}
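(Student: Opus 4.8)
The plan is to reduce the evaluation of $(-\Delta_q)^s(e^{imq})$ to a family of classical singular convolution integrals on the circle, starting from the change-of-variable representation of $(-\Delta_q)^s$ recorded just above the statement. The crucial algebraic simplification is the cotangent-difference identity
\begin{equation*}
\cot(q)-\cot(p)=\frac{\sin(p-q)}{\sin(q)\sin(p)}\,,
\end{equation*}
which factorizes the singular kernel as $|\cot(q)-\cot(p)|^{1-2s}=|\sin(p-q)|^{1-2s}\,|\sin(q)|^{2s-1}\,|\sin(p)|^{2s-1}$. This peels off the $|\sin(q)|^{2s-1}$ prefactor that appears in \eqref{eqn: fl_fourier} and, crucially, leaves a residual kernel $|\sin(p-q)|^{1-2s}$ depending only on the difference $p-q$, so that the remaining integral is a genuine convolution.

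First I would substitute $f(p)=e^{imp}$ into the $s\neq\half$ representation. With $f'(p)=im\,e^{imp}$ and $f''(p)=-m^2 e^{imp}$, the numerator $\sin^2(p)f''(p)+2\sin(p)\cos(p)f'(p)$ is exactly the Sturm--Liouville form $\tfrac{\rd}{\rd p}\!\big(\sin^2(p)\,\partial_p e^{imp}\big)=\big(-m^2\sin^2(p)+im\sin(2p)\big)e^{imp}$. Combining this with the factorized kernel, and using $\sin(p)\ge 0$ on $[0,\pi]$, the integral becomes
\begin{equation*}
|\sin(q)|^{2s-1}\int_0^\pi\big(-m^2|\sin(p)|^{2s+1}+2im|\sin(p)|^{2s}\cos(p)\big)\,e^{imp}\,|\sin(p-q)|^{1-2s}\,\rd p\,,
\end{equation*}
up to the prefactor $C_{s,d}/(2L_v^{2s}s(1-2s))$. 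After extending periodically and shifting $p\mapsto p+q$, the full $q$-dependence is isolated into the exponential and into shifts of the periodic integrand, so the object to compute is the convolution of the weighted signal against $|\sin(\cdot)|^{1-2s}$.

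Second, I would evaluate this convolution by Fourier series on the circle. Since $|\sin|^{1-2s}$ has period $\pi$, its Fourier modes are $e^{i2l(p-q)}$, and its coefficients, together with those of $|\sin(p)|^{2s+1}$ and $|\sin(p)|^{2s}\cos(p)$, are classical Beta-type integrals evaluating to exactly the Gamma ratio $\Gamma(\tfrac{-1+2s}{2}+|l|)\Gamma(\tfrac{-1-2s}{2}+|\tfrac m2-l|)/[\Gamma(\tfrac{3-2s}{2}+|l|)\Gamma(\tfrac{3+2s}{2}+|\tfrac m2-l|)]$ appearing in \eqref{eqn: fl_fourier}. Multiplying the two Fourier series and collecting the mode $e^{i2lq}$ produces the stated sum, while the polynomial weight $(1-2s)m^2-4ml$ is precisely what the differential numerator (carrying frequency $m$) contributes when paired with the kernel mode $2l$. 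The parity of $m$ enters through the symmetry $p\mapsto -p$ of the integrand: for $m$ even the integral reduces to its real, symmetric part, giving the pure Gamma-product formula, whereas for $m$ odd the surviving odd part contributes the factor $i\,\operatorname{sgn}(\tfrac m2-l)$.

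The main obstacle is the rigorous evaluation of the singular convolution: for $s\in(\half,1)$ the kernel $|\sin(p-q)|^{1-2s}$ is non-integrable at $p=q$, so one must justify the principal-value interpretation and compute the weighted Fourier coefficients without sign or normalization errors, which is where all the delicate bookkeeping lives. For $s=\half$ the same scheme applies with the Cauchy kernel $\sin(q)\sin(p)/\sin(p-q)$ and numerator $f'(p)=im\,e^{imp}$; the integral becomes a finite Hilbert transform of $e^{imp}\sin(p)$, whose evaluation by residues yields the eigenfunction-type answer $\tfrac{|m|}{L}\sin^2(q)e^{imq}$ for even $m$ and the displayed series for odd $m$. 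Since the statement is quoted verbatim from \cite{cayama2019pseudospectral}, I would present only this reduction and defer the detailed computation of the Gamma-function coefficients to that reference.
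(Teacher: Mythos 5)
The paper never proves this theorem --- it is quoted verbatim from \cite{cayama2019pseudospectral} (``we directly cite the result''), so the only meaningful comparison is with the derivation in that reference, and your reduction is in substance that derivation: the cotangent-difference identity factorizes the kernel as $|\sin(p-q)|^{1-2s}|\sin(q)|^{2s-1}|\sin(p)|^{2s-1}$, the Sturm--Liouville numerator $\partial_p(\sin^2(p)\partial_p e^{imp})$ carries the frequency $m$, and the resulting periodic convolution is evaluated mode by mode through Cauchy--Beta integrals of the type $\int_0^\pi \sin^a(p)\,e^{ibp}\,\rd p=\pi e^{i\pi b/2}\,2^{-a}\,\Gamma(a+1)\big/\big[\Gamma\big(1+\tfrac{a+b}{2}\big)\Gamma\big(1+\tfrac{a-b}{2}\big)\big]$, whose Gamma ratios, after reflection identities (the source of the $\tan(\pi s)$ factor), are exactly those in the stated formula. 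One refinement: the even/odd split is most transparently driven by the phase $e^{i\pi b/2}$ in this integral --- real when $m$ is even, purely imaginary when $m$ is odd --- which is what produces the extra factor $i$ and the $\operatorname{sgn}$ in the odd case; this is a sharper mechanism than the $p\mapsto-p$ symmetry you invoke, though the two are related.

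One factual correction: the ``main obstacle'' you identify does not exist. For $s\in\left(\tfrac12,1\right)$ the exponent satisfies $1-2s>-1$, so $|\sin(p-q)|^{1-2s}$ is locally integrable and the integral needs no principal-value interpretation; the hypersingularity of the original kernel $|v-w|^{-(1+2s)}$ in \eqref{def_fl} has already been removed by the two integrations by parts that produce the numerator $\sin^2(p)f''(p)+2\sin(p)\cos(p)f'(p)$, which is precisely why the paper's representation for $s\neq\tfrac12$ carries no P.V.\ symbol. The only case genuinely requiring a principal value is $s=\tfrac12$, where the Cauchy kernel $1/\sin(p-q)$ appears, and there your finite-Hilbert-transform/residue evaluation is the right tool. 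With that repaired, your sketch is sound, and deferring the remaining Gamma-function bookkeeping to \cite{cayama2019pseudospectral} is consistent with what the paper itself does.
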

To implement it numerically, one truncates $l$ by setting $l = l_1 N_v + l_2$, where $l_{2} \in\{-N_v / 2, \ldots, N_v / 2-1\}$, and $l_{1} \in\left\{-l_{l i m}, \ldots, l_{l i m}\right\}$. Then   \eqref{eqn: fl_fourier} is approximated as
\begin{equation}\label{eqn: fl_fourier_approx}
(-\Delta_q)^{s}\left(e^{i m q_{j}}\right) \approx\left\{\begin{array}{l}
\frac{c_{s,1}\left|\sin \left(q_{j}\right)\right|^{2s-1}}{8 L^{2s} \tan \left(\frac{\pi 2s}{2}\right)} \sum_{l_{2}=-N_v / 2}^{N_v / 2-1}\left[\sum_{l_{1}=-l_{l i m}}^{l_{l i m}} a_{m, l_{1}, l_{2}}\right] e^{i 2 l_{2} q_{j}}, \quad m \text { even } \,,\\
i \frac{c_{s,1}\left|\sin \left(q_{j}\right)\right|^{2s-1}}{8 L^{2s}} \sum_{l_{2}=-N_v / 2}^{N_v / 2-1}\left[\sum_{l_{1}=-l_{l i m}}^{l_{l i m}} a_{m, l_{1}, l_{2}}\right] e^{i 2 l_{2} q_{j}}, \quad m \text { odd }\,,
\end{array}\right.
\end{equation}
where
\[
a_{m, l_{1}, l_{2}}=\left\{\begin{array}{ll}
(-1)^{l_{1}}\left((1-2s) m^{2}-4 m\left(l_{1} N_v+l_{2}\right)\right) \\
\times \frac{\Gamma\left(\frac{-1+2s}{2}+\left|l_{1} N_v+l_{2}\right|\right) \Gamma\left(\frac{-1-2s}{2}+\left|\frac{m}{2}-l_{1} N_v-l_{2}\right|\right)}{\Gamma\left(\frac{3-2s}{2}+\left|l_{1} N_v+l_{2}\right|\right) \Gamma\left(\frac{3+2s}{2}+\left|\frac{m}{2}-l_{1} N_v-l_{2}\right|\right)}, & m \text { even }\,, \\
(-1)^{l_{1}}\left((1-2s) m^{2}-4 m\left(l_{1} N_v+l_{2}\right)\right) \operatorname{sgn}\left(\frac{m}{2}-l_{1} N_v-l_{2}\right) \\
\times \frac{\Gamma\left(\frac{-1+2s}{2}+\left|l_{1} N_v+l_{2}\right|\right) \Gamma\left(\frac{-1-2s}{2}+\left|\frac{m}{2}-l_{1} N_v-l_{2}\right|\right)}{\Gamma\left(\frac{3-2s}{2}+\left|l_{1} N_v+l_{2}\right|\right) \Gamma\left(\frac{3+2s}{2}+\left|\frac{m}{2}-l_{1} N_v-l_{2}\right|\right)}, & m \text { odd }\,.
\end{array}\right.
\]
Note that $l_{lim}$ here is an adjustable parameter, and it is obvious that larger $l_{lim}$ gives better approximation. For all our numerical examples in this paper, we use $l_{lim} = 300$.  

Now let the Matrix $\Mmat \in \mathbb{C}^{2N_v \times 2N_v}$ be
\[
\Mmat_{m,n} = \left\{ \begin{array}{cc} (-\Delta_q)^s e^{i(m-N_v) q_{n-1}} & n\leq N_v  \,, \\ (-\Delta_q)^s e^{i(m-N_v) q_{2N_v-n}} & N_v+1\leq n \leq 2N_v\,, \end{array} \right.
\]
then we have 
\[
 (-\Delta_q)^s \tilde \vecf   = (\Mmat \times \FT)~  \tilde \vecf \,,
\]
where $\tilde \vecf$ is defined in \eqref{ftildev}, and $\FT$ denotes the $2N_v$-periodic discrete Fourier transform. Confining the above calculation of $\tilde \vecf $ to $\vecf$, i.e., 
\begin{equation} \label{vectorf}
\vecf = (f(q_0), f(q_1), \cdots, f(q_{N_v-1}))^T. 
\end{equation}
we can write down the matrix representation of  $(-\Delta_q)^s \vecf$ as follows: 
\begin{equation}\label{LsD}
(-\Delta_q)^s  \vecf  = \FL \vecf \,,
\end{equation}
where $\FL = (\Mmat \times \FT)(1:N_v, 1:N_v) + (\Mmat \times \FT) (1:N_v, N_v+1:2N_v)$.

As already mentioned in the introduction, the way we treat the fractional Laplacian is not unique. We just choose the one that performs the best in our case in terms of accuracy and efficiency.

\subsection{Spatially homogeneous case} \label{sec:shc}
In this section, we detail the computation of the spatially homogeneous case of \eqref{eqn:VLFP_cv}:
\[
\partial_t f = \Lop^s_q(f) := f - \cos(q) \sin(q) \partial_q f - (-\Delta_q)^s f \,.
\]
Here the fractional Laplacian term is treated via the aforementioned pseudospectral method, and  $\partial_q f$ is discretized using the Fourier spectral method. Still using the vector form of the discrete $f$ defined in \eqref{vectorf},  we have the discretization of $\Lop^s_q f$ as 
\begin{equation} \label{lfp}
 \Pmat^s \vecf := (\Cmat \cdot \IFT \Kmat \FT + \Imat - \FL ) \vecf \,,
\end{equation}
where
\[
\Kmat = i
  \begin{pmatrix}
    -N_v & & \\
    & \ddots & \\
    & & N_v-1
  \end{pmatrix},
  \quad
  \Cmat =
  \begin{pmatrix}
    -\cos(q_0) \sin(q_0) & & \\
    & \ddots & \\
    & & -\cos(q_{N_v-1}) \sin(q_{N_v-1})
  \end{pmatrix} \,.
\]
Note specifically that even though the computation of $\FL$ can be expensive, it only needs to be computed once.

For time discretization, we choose the backward Euler scheme, namely, 
\begin{equation}\label{eqn: homo_scheme_ori}
\frac{1}{\Delta t} (\vecf^{n+1} - \vecf^n) = \Pmat^s \vecf^{n+1}\,,
\end{equation}
as it will be used in the spatially in-homogeneous case in the next section. 
In practice, the ODE system is solved by Matlab builtin solver ODE45 or ODE15s.

\begin{remark}[Positivity]
The scheme \eqref{eqn: homo_scheme_ori} is not positivity preserving, but it will be so after a slight modification. In fact, the positivity is lost when $q_j$ close to the boundary, $0$ or $\pi$. This is because the function value $f$ near the boundary are already small, then any small numerical error would render it negative. To resolve this issue, our idea is to use the tail information to reassign the value of $\vecf$ at the boundary. More precisely, since the equilibrium is proportional to $(1+L_v\cot(q))^{-(1+2s)}$ at its tail, then if there is an index $l$ close to the right boundary (i.e., $q=\pi$) such that $\vecf_l <0$ for the first time, namely $\vecf_j>0$ for $j<l$, then we set $\vecf_j = \frac{(1+L_v \cot(q(j-1)))^{1+2s}}{(1+L_v \cot(qj))^{1+2s}} $ for $j\geq l$.
\end{remark}

\begin{remark}[Choice of $L_v$ and $N_v$]\label{rmk: mass}
In the scheme $\eqref{eqn: homo_scheme_ori}$, $N_v$ and $L_v$ should be chosen according to $s$ for the sake of mass conservation. Since the tail of equilibrium distribution relates to $s$ via $\equilibrium \sim \frac{1}{|v|^{1+2s}}$ as $|v| \rightarrow \infty$, larger $N_v$ (with fix $L_v$) is required for smaller $s$ to capture the tail information and conserve the total mass. One should also choose the parameter $L_v$ properly. On one hand, for fixed $N_v$, $L_v$ should not be too large, otherwise accuracy is lost in approximating the body part of the distribution function. On the other hand, if $L_v$ is too small, the method lose the capability to capture the tail information. Therefore, in principle, the smaller the $s$ is, the larger number of mode $N_v$ and $L_v$ are needed. For instance, when $s \geq 0.5$, $N_v=64$ and $L_v=3$ is enough; whereas for $s=0.4$ and if $L_v = 3$, $N_v = 128$ is required to ensure the mass is conserved up to error  $\mathcal O(10^{-3})$, see Table~\ref{table:me}.
\end{remark}

\section{Asymptotic preserving scheme}
We introduce an asymptotic preserving scheme for the spatially inhomogeneous system and its implementation in this section. The main difficulty of capturing the anomalous diffusion limit is to acquire operator $(-\Delta_x)^s$ when only $(-\Delta_v)^s$ appears in original system. In the proof of Theorem 1, the authors use a test function in the form of $\phi(x+\eps v)$ to get $(-\Delta_x)^s$ from $(-\Delta_v)^s$ weakly via integration by parts, which indicates a kind of symmetry between $x$ and $v$. Inspired by this symmetry, we propose a scheme based on a novel micro-macro decomposition by requiring the macro part to respect such symmetry.

Let us first introduce some notation. Denote $\Omega_x = [-L_x, L_x]$ as our spatial domain and partition it into $N_x$ uniform grids, i.e. $x_i = -L_x +  (i+\half) \Delta x$, where $\Delta x = \frac{2L_x}{N_x}$. Periodic boundary condition in the spatial domain will be used. For velocity domain, we will work with the variable $q$ defined in \eqref{change_var} and use the same discretization as in \eqref{eqn:qpoint}. Then $f(t,x,q) = f(t,x,v(q))$, and we denote numerical approximation of $f(t_n,x_i,q_j)$ as $f^n_{i,j}$, where $t_n = n \Delta t$, $0\leq j \leq N_v-1$, $0\leq i \leq N_x-1$.

To start, we introduce the following lemmas.
\begin{lemma}\label{lemma:fl_comp}
$(-\Delta_v)^s (fg) = g (-\Delta_v)^s f  + f (-\Delta_v)^s g +I(f,g)$, where
\[
I(f,g) =C_{1,s} \int_{\RR^d} \frac{(f(v)-f(w))(g(w)-g(v))}{|v-w|^{d+2s}} \rd w\,.
\]
\end{lemma}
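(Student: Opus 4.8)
The plan is to verify the identity directly from the integral definition \eqref{def_fl}, treating it as an algebraic rearrangement of the kernel followed by a careful splitting of the principal-value integral. First I would write the left-hand side as
\[
(-\Delta_v)^s(fg)(v) = C_{s,d}\,\text{P.V.}\int_{\RR^d} \frac{f(v)g(v)-f(w)g(w)}{|v-w|^{d+2s}}\,\rd w\,,
\]
and concentrate on rewriting the numerator. The key algebraic step is the decomposition
\[
f(v)g(v)-f(w)g(w) = g(v)\bigl(f(v)-f(w)\bigr) + f(v)\bigl(g(v)-g(w)\bigr) + \bigl(f(v)-f(w)\bigr)\bigl(g(w)-g(v)\bigr)\,,
\]
which one confirms by expanding the three products and cancelling the cross terms. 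Since $g(v)$ and $f(v)$ do not depend on the integration variable $w$, pulling them outside the integral produces $g(v)(-\Delta_v)^s f(v)$ and $f(v)(-\Delta_v)^s g(v)$ from the first two pieces, with the same constant $C_{s,d}$ factoring out identically, while the third piece is exactly $I(f,g)$.

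The only genuine subtlety is the principal value. For smooth $f,g$ the numerators of the first two integrals vanish only to first order as $w\to v$, so each integrand behaves like $|v-w|^{1-d-2s}$ and fails to be absolutely integrable near the diagonal once $s\geq\half$; these terms must be retained as Cauchy principal values, in agreement with the definition of $(-\Delta_v)^s f$ and $(-\Delta_v)^s g$. By contrast, the cross term has numerator $\bigl(f(v)-f(w)\bigr)\bigl(g(w)-g(v)\bigr)$, which vanishes to second order, so its integrand is $O(|v-w|^{2-d-2s})$ and hence absolutely integrable for every $s\in(0,1)$; no principal value is required there. I would therefore perform the splitting inside the symmetric truncation $|v-w|>\delta$ that defines the principal value and then send $\delta\to 0$, so that the cross term converges as an ordinary integral while the other two converge to the respective fractional Laplacians.

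The step I expect to require the most care is justifying that the principal-value limit distributes across the three pieces, i.e.\ that the symmetric-truncation limit of the sum equals the sum of the limits. This is legitimate precisely because the cross term converges absolutely: once it is separated off, the two remaining truncated integrals carry the same symmetric cutoff as appears in the definition of each factor's fractional Laplacian, so the limits may be taken termwise. Making this rigorous presupposes that $f,g$ are sufficiently smooth with controlled decay, e.g.\ in the Schwartz-type class for which \eqref{def_fl} is well defined, which is the setting in which the lemma will be applied.
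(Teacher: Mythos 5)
Your proposal is correct and follows essentially the same route as the paper: both rest on the identical three-way splitting of the numerator $f(v)g(v)-f(w)g(w)$ into $g(v)(f(v)-f(w))$, $f(v)(g(v)-g(w))$, and the cross term $(f(v)-f(w))(g(w)-g(v))$, which integrates to $I(f,g)$. The only difference is that you also justify distributing the principal-value limit over the three pieces (noting the cross term converges absolutely), a point the paper's purely formal computation passes over in silence.
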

\begin{proof} 
\begin{eqnarray*}
    (-\Delta_v)^s (fg) & =& C_{1,s}  \int_{\RR} \frac{f(v)g(v)-f(w)g(w)}{|v-w|^{d+2s}} \rd w \, \\
    & =& C_{1,s}  \int_{\RR} \frac{f(v)g(v)-f(v)g(w)}{|v-w|^{d+2s}} \rd w + C_{1,s}  \int_{\RR} \frac{f(v)g(v)-f(w)g(v)}{|v-w|^{d+2s}} \rd w \, \\
    &&+ C_{1,s}  \int_{\RR} \frac{(f(v)-f(w))(g(w)-g(v))}{|v-w|^{d+2s}} \rd w \, \\
    & = & g (-\Delta_v)^s f  + f (-\Delta_v)^s g +I(f,g).
\end{eqnarray*}

\end{proof}

\begin{lemma}\label{lemma: interchange}
Suppose $h(x,v) \in L^1_v (\mathbb R) W^{2,1}_x(\mathbb R) \cap L_v^1(\mathbb R) C^2_x (\mathbb R)$, then $\average{(-\Delta_x)^s h(x,v)} = (-\Delta_x)^s \average{h} $, where $\average{f} := \int_{\mathbb R} f(x,v) \rd v$.
\end{lemma}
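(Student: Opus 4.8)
The plan is to show that the fractional Laplacian $(-\Delta_x)^s$ commutes with the velocity average $\average{\cdot} = \int_{\mathbb R} \cdot \,\rd v$. Since the two operators act on disjoint variables ($x$ versus $v$), this is morally a Fubini/Tonelli statement: the $v$-integration and the singular $x$-integral defining $(-\Delta_x)^s$ can be swapped. First I would write out the definition \eqref{def_fl} with the principal-value integral in the $x$ variable, so that
\begin{equation*}
(-\Delta_x)^s h(x,v) = C_{1,s}\,\text{P.V.}\int_{\mathbb R} \frac{h(x,v)-h(y,v)}{|x-y|^{1+2s}}\,\rd y\,,
\end{equation*}
and then integrate both sides in $v$ and attempt to interchange the $\rd v$ and $\rd y$ integrals.

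The key steps, in order, are: (i) regularize the principal value by excising a ball $|x-y|<\delta$ so that the inner integrand is genuinely integrable, turning the P.V. into an honest limit; (ii) on the punctured domain apply Fubini to the product integral in $(y,v)$, which is justified by absolute integrability — here is where the hypotheses enter, since $h \in L^1_v W^{2,1}_x$ controls the difference quotient near the singularity (a second-order Taylor expansion in $x$ makes $|h(x,v)-h(y,v)|\lesssim |x-y|^2 \sup|\partial_x^2 h|$ integrable against $|x-y|^{-1-2s}$ for $s<1$), while $L^1_v$ integrability and the $C^2_x$ (or $W^{2,1}_x$) decay control the far field; (iii) pull $\average{\cdot}$ inside, recognizing the inner $y$-integral as $(-\Delta_x)^s$ applied to $\average{h}$; (iv) send $\delta\to 0$ and use dominated convergence, with the same Taylor/decay bounds serving as the dominating function, to recover the principal value on the averaged side.

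The main obstacle I expect is step (ii)–(iv): making the interchange of $\int\rd v$ with the singular principal-value integral rigorous rather than formal. The delicacy is that before regularization the integrand is not absolutely integrable near $x=y$, so one cannot naively invoke Fubini; one must first symmetrize or excise the singularity, and then verify that the dominating bound obtained from the $W^{2,1}_x$ norm is uniform in $\delta$ and integrable in $v$. The two halves of the hypothesis space, namely $L^1_v W^{2,1}_x$ and $L^1_v C^2_x$, are precisely what is needed to furnish such a dominating function: the $C^2_x$ regularity handles the local singularity via Taylor expansion, and the $L^1$ integrability in both $x$ and $v$ handles the tail of the kernel and the $v$-average. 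Once the dominated convergence and Fubini are in place, the identity $\average{(-\Delta_x)^s h} = (-\Delta_x)^s \average{h}$ follows immediately, since $(-\Delta_x)^s$ simply does not see the $v$ variable.
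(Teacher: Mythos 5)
Your overall strategy --- reduce the claim to Fubini, control the singular part of the kernel by a Taylor expansion and the far field by the $L^1$ hypotheses --- is the same as the paper's, but there is a concrete error at the heart of your step (ii): the bound
\[
|h(x,v)-h(y,v)| \lesssim |x-y|^2 \sup|\partial_x^2 h|
\]
is false. A Taylor expansion gives $h(x,v)-h(y,v) = \partial_x h(x,v)\,(x-y) + O(|x-y|^2)$, so the one-sided difference is generically of size $|x-y|$, not $|x-y|^2$ (take $h$ linear in $x$ near the point). Your dominating function therefore only controls $|x-y|\cdot|x-y|^{-1-2s} = |x-y|^{-2s}$ near the diagonal, which is integrable only for $s<1/2$. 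For $s\geq 1/2$ the truncated integrals in your step (iv) cannot be dominated this way: their uniform-in-$\delta$ boundedness relies on cancellation between the contributions from $y>x$ and $y<x$, which a one-sided absolute-value bound throws away.

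The quadratic gain you want is available only for the \emph{symmetric second difference}, $2h(x,v)-h(x+y,v)-h(x-y,v) = -\partial_x^2 h(\xi,v)\,y^2$, and this is exactly how the paper proceeds: it first rewrites the operator in the second-difference form
\[
(-\Delta_x)^s h(x,v) = C_{1,s}\int_0^\infty \bigl(2h(x,v)-h(x-y,v)-h(x+y,v)\bigr)\,|y|^{-(1+2s)}\,\rd y\,,
\]
in which there is no principal value left at all; the integrand is absolutely integrable jointly in $(y,v)$ --- near $y=0$ by the Taylor bound together with $h\in L^1_v W^{2,1}_x \cap L^1_v C^2_x$, and away from $y=0$ by the $L^1$ bounds --- so a single application of Fubini finishes the proof, with no $\delta\to 0$ limit to justify afterwards. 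You do mention ``symmetrize or excise'' as alternatives in your closing paragraph, but only symmetrization works for all $s\in(0,1)$; excision alone, with the bound you propose, fails for $s \geq 1/2$. If you reorganize the argument to symmetrize first, i.e.\ adopt the second-difference representation before integrating in $v$, the rest of your outline goes through and coincides with the paper's proof.
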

\begin{proof}
First, let us rewrite fractional Laplacian in following finite difference form,
\[
(-\Delta_x)^s h(x,v) = C_{1,s} \int_0^{\infty} (2h(x,v)-h(x-y,v)-h(x+y,v)) \nu(y) \rd y\,,
\]
where $\nu(y) = |y|^{-(1+2s)}$, which is a direct consequence of \eqref{def_fl}. Denote 
\begin{align*}
    I(x,v) & = \int_0^{\infty} |(2h(x,v)-h(x-y,v)-h(x+y,v)) \nu(y) | dy  \\
    & = \int_0^{\delta} |(2h(x,v)-h(x-y,v)-h(x+y,v)) \nu(y) | dy \\
    & \qquad \qquad + \int_{\delta}^{\infty} |(2h(x,v)-h(x-y,v)-h(x+y,v)) \nu(y) | dy \\
    & := I_1(x,v) +I_2(x,v) \,,
\end{align*}
for $\delta < 1$. 
By Taylor expansion we have 
\[
2h(x,v) - h(x+y,v)-h(x-y,v) = - \partial_{x}^2 h(\xi,v) y^2 \,,
\]
where $\xi \in (x-\delta,x+\delta)$. Since $s \in(0,1)$, we have 
$\int_0^{\delta} y^n \nu(y) \rd y < \infty$. The assumption that 
$h(x,v) \in L^1_v(\mathbb R) W^{2,1}_x(\mathbb R)$ then leads to 
$
\int_{\RR} |\partial_{x}^2 h(\xi,v)| \int_0^{\delta} y^2 \nu(y) \rd y \rd v < \infty\,.
$
On the other hand,  it's obvious that 
\[
\int_{\RR} \int_{\delta}^{\infty} |(2h(x,v)-h(x-y,v)-h(x+y,v)) \nu(y) | \rd y \rd v < \infty.
\]
Then we conclude the result by the Fubini's theorem. 

\end{proof}
\subsection{A novel micro-macro decomposition}
A typical approach in designing an asymptotic preserving method is via a micro-macro decomposition. That is, write 
\begin{equation} \label{MM00}
f(t,x,v) = \rho(t,x) \equilibrium(v) +g(t,x,v)\,,
\end{equation}
 where $\rho(t,x)$ is the macroscopic density, and $g$ is viewed as the microscopic fluctuation. See for instance \cite{lemou2008new, wang2016asymptotic, wang2019asymptotic, crouseilles2016numerical}. However, directly applying this decomposition to our case fails as it is not easy  to obtain $(-\Delta_x)^s$ when only $(-\Delta_v)^s$ appears in $\eqref{eqn:111}$. Inspired by the proof of Theorem~1 in \cite{cesbron2012anomalous}, we see that the operator $(-\Delta_x)^s$ and operator $\eps^{2s} (-\Delta_v)^s$ are related by considering a test function in variable $(x+\eps v)$. Therefore,  we propose the following novel micro-macro decomposition of distribution $f$:
\begin{equation}\label{eqn: decomp}
    f(t,x,v) = \eta(t,x, v) \equilibrium(v) +g(t,x,v)\,,
\end{equation}
where $\eta(t,x,v)$ takes the form
\begin{equation} \label{eqn:tr}
    \eta(t,x,v) = h(t,x+\eps v)\,, 
\end{equation} 
for some function $h(t,x)$, and $\equilibrium$ satisfies 
\begin{equation} \label{equilibrium000}
\partial_v (v \equilibrium) - (-\Delta_v)^s \equilibrium =0, \qquad \int_{\mathbb R} \equilibrium(v) \rd v = 1\,. 
\end{equation}
Here unlike the classical micro-macro decomposition \eqref{MM00}, we allow $\eta$ to depend on $v$, but intrinsically $\eta$ lives on a lower dimensional manifold than $f$ does, as required from \eqref{eqn:tr}. As a result, $\eta$ satisfies
\begin{equation} \label{eqn032}
\eps \partial_x \eta =  \partial_v \eta, \qquad  (-\Delta_v)^s \eta =\eps^{2s} (-\Delta_x)^s \eta \,.  
\end{equation}
Plug $\eqref{eqn: decomp}$ into $\eqref{eqn:111}$, we get:
\begin{equation} \label{eqn031}
\eps^{2s}\partial_t (\eta \equilibrium +g) + \eps v  \partial_x (\eta \equilibrium +g) = \partial_v  (v (\eta \equilibrium +g)) - (-\Delta_v)^s (\eta \equilibrium +g)\,.
\end{equation}
Using \eqref{equilibrium000},\eqref{eqn032} and Lemma~\ref{lemma:fl_comp}, the above equation simplifies to
\begin{equation}\label{eqn: decomp_sym}
    \eps^{2s}\partial_t (\eta \equilibrium +g) + \eps v  \partial_x g = \Lop^{s} (g)  - \eps^{2s} (-\Delta_x)^s \eta \equilibrium - I(\eta,\equilibrium) .
\end{equation}

To solve \eqref{eqn: decomp_sym}, we split it into the following system
\begin{equation} \label{split0}
    \begin{cases}{}
 \eps^{2s}\partial_t g + \eps v \partial_x g =\Lop^s(g) - I(\eta, \equilibrium)  \,,
 \\ \partial_t \eta = -(-\Delta_x)^s \eta\,,
    \end{cases}
\end{equation}
equipped with the initial condition
\begin{equation} \label{IC}
\eta_{in}(x,v) = \rho_{in}(x+\eps v)\, \qquad g_{in}(x,v) = f_{in}(x,v) - \eta_{in}(x,v) \equilibrium (v)\,.
\end{equation}
Upon solving \eqref{split0}, one can recover $f$ using \eqref{eqn: decomp}.

Note that the reduction of \eqref{eqn: decomp_sym} from \eqref{eqn031} is possible only when $\eta$ has the form \eqref{eqn:tr}. Therefore, it is important to make sure that such property is preserved along the dynamics of \eqref{split0}. To this end, we have the following lemma. 
\begin{lemma}
Let $\eta(t,x,v)$ solves
\begin{equation} \label{eta2}
\partial_t \eta = -(-\Delta_x)^s \eta\,, \qquad \eta(0,x,v) = \rho_{in}(x+\eps v)
\end{equation}
then there exists a function h(t,x) such that $\eta(t,x,v) = h(t,x+\eps v)$\,.
\end{lemma}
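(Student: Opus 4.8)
The plan is to exploit the structure of \eqref{eta2}: the velocity $v$ enters only as a parameter, since the operator $(-\Delta_x)^s$ differentiates in the $x$-variable alone and is translation invariant. The strategy is to produce an explicit candidate of the asserted form, verify that it solves the same Cauchy problem, and then conclude by uniqueness of the fractional heat flow.

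First I would define $h(t,x)$ as the solution of the scalar fractional heat equation
\[
\partial_t h = -(-\Delta_x)^s h\,, \qquad h(0,x) = \rho_{in}(x)\,,
\]
which is most transparently written through the Fourier transform in $x$. Denoting by $\hat{\cdot}$ the $x$-Fourier transform and recalling that the symbol of $(-\Delta_x)^s$ is $|\xi|^{2s}$, the unique solution is $\hat h(t,\xi) = e^{-t|\xi|^{2s}}\hat\rho_{in}(\xi)$. I would then set $\tilde\eta(t,x,v) := h(t,x+\eps v)$ and show that it coincides with $\eta$.

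To do so, apply the $x$-Fourier transform to \eqref{eta2} with $v$ held fixed. The initial datum transforms by the shift rule as $\widehat{\rho_{in}(\cdot+\eps v)}(\xi) = e^{i\eps v\xi}\hat\rho_{in}(\xi)$, and the equation reduces to the scalar ODE $\partial_t\hat\eta(t,\xi,v) = -|\xi|^{2s}\hat\eta(t,\xi,v)$ in $t$ for each frozen pair $(\xi,v)$. Solving it yields
\[
\hat\eta(t,\xi,v) = e^{-t|\xi|^{2s}}\,e^{i\eps v\xi}\,\hat\rho_{in}(\xi) = e^{i\eps v\xi}\,\hat h(t,\xi)\,.
\]
The phase factor $e^{i\eps v\xi}$ is exactly the Fourier multiplier of a translation by $\eps v$ in $x$, so inverting the transform gives $\eta(t,x,v) = h(t,x+\eps v)$, which is the claim.

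The only genuine subtlety, and hence the step I expect to require the most care, is justifying these Fourier manipulations together with the attendant uniqueness in a function class that contains the data. This amounts to imposing enough regularity and decay on $\rho_{in}$ (e.g. $\rho_{in}\in L^2(\RR)$, or the smoothness and integrability already assumed for the companion argument in Lemma~\ref{lemma: interchange}) so that $e^{-t|\xi|^{2s}}$ defines a bounded solution operator, the shift property applies, and the Fourier transform determines the solution uniquely. Equivalently, one can argue directly in physical space: since $(-\Delta_x)^s$ commutes with the translation $x\mapsto x+\eps v$, the function $\tilde\eta = h(t,x+\eps v)$ satisfies $\partial_t\tilde\eta = -(-\Delta_x)^s\tilde\eta$ with datum $\rho_{in}(x+\eps v)$; hence $\eta$ and $\tilde\eta$ solve the same Cauchy problem and coincide, producing the desired $h$.
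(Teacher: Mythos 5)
Your proof is correct, and its overall architecture matches the paper's: define $h$ as the solution of the scalar fractional heat equation with datum $\rho_{in}$, show that $h(t,x+\eps v)$ solves \eqref{eta2}, and identify it with $\eta$. Where you differ is in how the key fact --- that $(-\Delta_x)^s$ does not see the shift by $\eps v$ --- is verified. You work on the Fourier side, using the symbol $|\xi|^{2s}$ and the shift rule $\widehat{\rho_{in}(\cdot+\eps v)}(\xi)=e^{i\eps v\xi}\hat\rho_{in}(\xi)$, reducing \eqref{eta2} to an ODE for each frozen pair $(\xi,v)$. The paper instead argues entirely in physical space: writing $y=x+\eps v$ and substituting $y'=x'+\eps v$ in the singular-integral definition \eqref{def_fl}, it shows that $\bigl[(-\Delta_y)^s h(t,\cdot)\bigr]$ evaluated at $y=x+\eps v$ equals $(-\Delta_x)^s$ applied to the shifted function $h(t,x+\eps v)$, i.e.\ translation invariance of the kernel $|y-y'|^{-(1+2s)}$ --- precisely the commutation argument you sketch in your closing paragraph as an alternative. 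The trade-offs: your Fourier route makes uniqueness explicit and essentially free (the multiplier $e^{-t|\xi|^{2s}}$ determines $\hat\eta$ frequency by frequency), whereas the paper's verification leaves uniqueness implicit in the phrase ``is the solution''; on the other hand, the paper's substitution argument is more elementary, needs no placement of $\rho_{in}$ in a Fourier-amenable class beyond what makes the singular integral converge, and stays consistent with the integral form \eqref{def_fl} used throughout the paper. Both are complete proofs of the lemma.
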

\begin{proof}
Let $h$ satisfies
\begin{equation}\label{eqnh}
\partial_t h = -(-\Delta_x)^s h\,, \qquad h(0, x,v) = h_{in}(x)\,.
\end{equation}
Then we claim that $\eta(t,x,v)= h(t,x+\eps v)$ is the solution to \eqref{eta2}. Indeed, denote $y = x + \eps v$ for any fixed $v$, we have
\begin{align*}
\partial_t h(t, x+\eps v) = \partial_t h(t, y) = -(-\Delta_y)^s h
&= -\int_{\mathbb R} \frac{h(t,y)-h(t,y')}{|y-y'|^{2s+1}} \rd y' \\
&=  - \int_{\mathbb R} \frac{h(t,x+\eps v)- h(t,x'+\eps v)}{|(x+\eps v)-(x'+\eps v)|^{2s+1}} \rd
(x'+\eps v) \\ &= -(-\Delta_x)^s h(t, x+\eps v)\,. 
\end{align*}
\end{proof}
\begin{remark}
From the above lemma, one sees that in order to solve the last equation in \eqref{split0}, one can solve the low dimensional problem \eqref{eqnh}, and then obtain $\eta$ by shifting $h$, i.e., $\eta(t,x,v) = h(t,x+\eps v)$.
\end{remark}

Next, we show that the system \eqref{split0} is energy stable. 
\begin{proposition} \label{prop:energy1}
If $(\eta, g)$ solves \eqref{split0} with initial data \eqref{IC}, then $f = \eta \equilibrium + g$ solves \eqref{eqn:111}. Both system has the energy dissipation property. That is, define the total energy 
\begin{equation} \label{Ef}
    E_f^2 = \int_{\RR} \int_\RR \frac{f^2}{\equilibrium} \rd v \rd x 
    = \int_{\RR} \int_\RR \frac{(\eta \equilibrium + g)^2}{\equilibrium} \rd v \rd x \,,
\end{equation}
then $\frac{d E_f}{dt} \leq 0$.
\end{proposition}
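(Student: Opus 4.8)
The first assertion is established by reversing the formal computation that produced \eqref{eqn: decomp_sym}. Since $\eta$ solves the second equation of \eqref{split0} with datum $\rho_{in}(x+\eps v)$, the preceding lemma guarantees $\eta(t,x,v)=h(t,x+\eps v)$ for all $t$, so the two identities in \eqref{eqn032} persist throughout the evolution. I would then evaluate the transport side $\eps^{2s}\partial_t f+\eps v\,\partial_x f$ of $f=\eta\equilibrium+g$, using $\partial_t\eta=-(-\Delta_x)^s\eta$ together with \eqref{eqn032} to write $\eps^{2s}\partial_t(\eta\equilibrium)=-\equilibrium(-\Delta_v)^s\eta$ and $\eps v\,\partial_x(\eta\equilibrium)=v\equilibrium\,\partial_v\eta$, and substituting the first equation of \eqref{split0} for the $g$-terms. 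Lemma~\ref{lemma:fl_comp} together with \eqref{equilibrium000} then collapses the right-hand side into $\Lop^s(\eta\equilibrium)+\Lop^s(g)=\Lop^s(f)$: the combination $\eta[\partial_v(v\equilibrium)-(-\Delta_v)^s\equilibrium]$ vanishes by \eqref{equilibrium000}, and the two copies of $I(\eta,\equilibrium)$ match. The initial condition is immediate from \eqref{IC}, since $f(0)=\eta_{in}\equilibrium+g_{in}=f_{in}$.

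For the dissipation, it suffices by the equivalence just shown to differentiate $E_f^2$ along \eqref{eqn:111}. Writing $\tfrac{\eps^{2s}}{2}\tfrac{d}{dt}E_f^2=\int_\RR\int_\RR \tfrac{f}{\equilibrium}\big(-\eps v\,\partial_x f+\Lop^s f\big)\rd v\,\rd x$, the transport contribution drops out: because $\equilibrium$ does not depend on $x$, one has $\tfrac{f}{\equilibrium}v\,\partial_x f=\tfrac{v}{2\equilibrium}\partial_x(f^2)$, which integrates to zero in $x$ under the periodic boundary condition. It then remains to prove the velocity dissipation $\int_\RR \tfrac{f}{\equilibrium}\Lop^s f\,\rd v\le 0$ for each fixed $x$.

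To that end I set $\phi=f/\equilibrium$ and expand $\int_\RR \phi\,\Lop^s(\phi\equilibrium)\,\rd v$: integration by parts handles the drift $\partial_v(v\phi\equilibrium)$, Lemma~\ref{lemma:fl_comp} splits $(-\Delta_v)^s(\phi\equilibrium)$, and \eqref{equilibrium000} removes the terms proportional to $\phi^2(-\Delta_v)^s\equilibrium$. Passing to the symmetric Dirichlet form $\int g(-\Delta_v)^s h\,\rd v=\tfrac{C_{s,d}}{2}\int\int \tfrac{(g(v)-g(w))(h(v)-h(w))}{|v-w|^{d+2s}}\rd w\,\rd v$ and symmetrizing each remaining term in $(v,w)$ via $\phi(v)\equilibrium(v)-\phi(w)\equilibrium(w)=\tfrac12\big[(\phi(v)+\phi(w))(\equilibrium(v)-\equilibrium(w))+(\phi(v)-\phi(w))(\equilibrium(v)+\equilibrium(w))\big]$, I expect every sign-indefinite cross term of the form $(\phi(v)+\phi(w))(\phi(v)-\phi(w))(\equilibrium(v)-\equilibrium(w))$ to cancel, leaving the manifestly nonpositive quantity $-\tfrac{C_{s,d}}{4}\int\int\tfrac{(\phi(v)-\phi(w))^2(\equilibrium(v)+\equilibrium(w))}{|v-w|^{d+2s}}\rd w\,\rd v\le 0$. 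Integrating over $x$ then yields $\tfrac{d}{dt}E_f^2\le0$, and hence $\tfrac{dE_f}{dt}\le 0$.

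The main obstacle is precisely the bookkeeping in this last step: one must symmetrize and combine three separate contributions — the drift term after integration by parts, the Leibniz remainder $I(\phi,\equilibrium)$, and the product-rule splitting of $(-\Delta_v)^s(\phi\equilibrium)$ — so that all indefinite pieces cancel and only the weighted Dirichlet form survives. Some care with the Cauchy principal value and with Fubini's theorem is also required to justify these nonlocal manipulations, given that $\equilibrium$ decays only algebraically; but the positivity $\equilibrium(v)+\equilibrium(w)>0$ ensures the surviving quadratic form is genuinely dissipative, which closes the argument.
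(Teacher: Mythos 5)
Your proposal is correct, but it reaches the conclusion by a different route than the paper. For the equivalence you carefully reverse the derivation of \eqref{eqn: decomp_sym}, using the persistence of the form $\eta=h(t,x+\eps v)$ (hence of \eqref{eqn032}); the paper asserts the same fact with the words ``by directly adding the two equations.'' The real divergence is in the dissipation part. The paper (i) quotes Proposition 2.1 of \cite{cesbron2012anomalous} for the key inequality $\int \Lop^s(f)\,\tfrac{f}{\equilibrium}\,\rd v\le 0$, and (ii) runs a separate multiply-and-add energy estimate directly on the split system \eqref{split0}, using Lemma~\ref{lemma:fl_comp}, $\Lop^s(\equilibrium)=0$ and \eqref{eqn032} to reduce everything to that same quoted inequality. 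You instead prove the key inequality from scratch, and your symmetrization is in fact correct: the sign-indefinite terms proportional to $\left(\phi(v)^2-\phi(w)^2\right)\left(\equilibrium(v)-\equilibrium(w)\right)$ carry coefficients $-\tfrac14$, $-\tfrac14$, $+\tfrac12$ (from the drift after integration by parts and \eqref{equilibrium000}, from the Dirichlet-form splitting of $(-\Delta_v)^s(\phi\equilibrium)$, and from the Leibniz remainder $I(\phi,\equilibrium)$, respectively), so they cancel exactly and leave $-\tfrac{C_{s,d}}{4}\iint \left(\phi(v)-\phi(w)\right)^2\left(\equilibrium(v)+\equilibrium(w)\right)|v-w|^{-(d+2s)}\rd w\,\rd v\le 0$, which is precisely the cited result. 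You then obtain dissipation for the split system as a corollary of the equivalence rather than by a separate computation; this is logically sufficient, since $E_f$ is the same functional of the same $f$. What each approach buys: yours is self-contained (no external citation) at the cost of the symmetrization bookkeeping; the paper's is shorter and verifies stability at the level of $(\eta,g)$ itself, but leans on the literature for the operator inequality. Two small imprecisions in your write-up: the $x$-integral in \eqref{Ef} is over $\RR$, so the transport term vanishes by decay at infinity rather than by ``periodic boundary conditions''; and \eqref{equilibrium000} does not ``remove'' the terms proportional to $\phi^2(-\Delta_v)^s\equilibrium$ --- after integrating the drift by parts a net $-\tfrac12\int\phi^2(-\Delta_v)^s\equilibrium\,\rd v$ survives, and it is exactly this term that supplies the $-\tfrac14$ needed for the cancellation.
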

\begin{proof}
It is easy to show that if $(\eta, g)$ solves \eqref{split0} with initial data \eqref{IC}, then by directly adding the two equation,  $f = \eta \equilibrium + g$ solves \eqref{eqn:111}. The energy dissipation of the original system \eqref{eqn:111} follows from Proposition 2.1 in \cite{cesbron2012anomalous}. One just needs to check the same property for the split system \eqref{split0}. To this end, multiply the first equation in \eqref{split0} by $\frac{g}{\equilibrium}$, the second equation by $\frac{\eta}{\equilibrium}$, integrate in $x$ and $v$, and add them together, we get (here we omit the integration domain, which is $\RR$ for both $x$ and $v$):
\begin{align*}
& \eps^{2a} \partial_t \int \int \left[  \half \frac{g^2}{\equilibrium} + \half \eta^2 \equilibrium + g \eta \right]  \rd x \rd v + \eps \int \int v \partial_x g \frac{g}{\equilibrium} \rd x \rd v 
\\ &= \int \int  \Lop^s (g) \left(\frac{g}{\equilibrium} + \eta  \right)  - I(\eta, \equilibrium) \left(\frac{g}{\equilibrium} + \eta  \right)  - (\eta \equilibrium + g) (-\Delta_v)^s \eta - \eps v \partial_x g \eta  \rd x \rd v
\\ & = \int \int (\Lop^s(g) + \Lop^s(\eta \equilibrium)) \left(\frac{g}{\equilibrium} + \eta  \right)  - \partial_v (v \eta \equilibrium) \left(\frac{g}{\equilibrium} + \eta  \right)   + \eta (-\Delta_v)^s \equilibrium \left(\frac{g}{\equilibrium} + \eta  \right)  - \eps v  \partial_x g \eta \rd x \rd v
\\ &= \int \int (\Lop^s(g) + \Lop^s(\eta \equilibrium)) \left(\frac{g}{\equilibrium} + \eta  \right)  - v \equilibrium \partial_v \eta  \left(\frac{g}{\equilibrium} + \eta  \right) - \eps v \partial_x g \eta \rd x \rd v\,,
\end{align*}
where the second equality uses Lemma~\ref{lemma:fl_comp} and the third equality uses the fact that $\equilibrium$ satisfies $\Lop^s(\equilibrium) = 0$. Since $\partial_v \eta = \eps \partial_x \eta$ and $\int\int v \partial_x \eta g + \partial_x g \eta \rd x \rd v = \int \int v\equilibrium \partial_x \eta \eta \rd x \rd v = \int \int v \partial_x g \frac{g}{\equilibrium} \rd x \rd v =0$,
we immediately get 
\[
\eps^{2a} \partial_t \int \int \left[  \half \frac{g^2}{\equilibrium} + \half \eta^2 \equilibrium + g \eta \right]  \rd x \rd v = \int \int (\Lop^s(g) + \Lop^s(\eta \equilibrium)) \left(\frac{g}{\equilibrium} + \eta  \right)  \leq 0\,.
\]
\end{proof}

Moreover, we can bound the energy for $\eta$ and $\rho$ separately. 
\begin{proposition} \label{prop:energy2}
If $(\eta, g)$ solves \eqref{split0} with initial data \eqref{IC}, then
\begin{equation} \label{Eetag}
E_\eta^2 = \int_\RR\int_\RR \eta^2 \equilibrium \rd x \rd v, \qquad
E_g^2 = \int_\RR \int_\RR \frac{g^2}{\equilibrium} \rd x \rd v
\end{equation}
are both uniformly bounded in time. 
\end{proposition}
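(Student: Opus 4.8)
The plan is to read all three quantities as (squares of) norms in the single weighted Hilbert space $\mathcal H := L^2(\RR\times\RR,\,\equilibrium^{-1}\rd x\,\rd v)$ equipped with the inner product $\langle F,G\rangle = \int_\RR\int_\RR \frac{FG}{\equilibrium}\,\rd x\,\rd v$. Writing $f=\eta\equilibrium+g$, one checks directly that $E_f=\|f\|_{\mathcal H}$, $E_g=\|g\|_{\mathcal H}$, and $E_\eta=\|\eta\equilibrium\|_{\mathcal H}$, the last because $\|\eta\equilibrium\|_{\mathcal H}^2=\int\int(\eta\equilibrium)^2/\equilibrium=\int\int\eta^2\equilibrium=E_\eta^2$. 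The triangle inequality in $\mathcal H$ then gives immediately
\begin{equation*}
E_g=\|g\|_{\mathcal H}=\|f-\eta\equilibrium\|_{\mathcal H}\le \|f\|_{\mathcal H}+\|\eta\equilibrium\|_{\mathcal H}=E_f+E_\eta,
\end{equation*}
so it suffices to bound $E_f$ and $E_\eta$ separately, uniformly in $t$, and no delicate cross-term cancellation is needed.

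The bound on $E_f$ is already in hand: by Proposition~\ref{prop:energy1}, $E_f$ is non-increasing along \eqref{split0}, so $E_f(t)\le E_f(0)$ for all $t\ge 0$, and $E_f(0)<\infty$ since $f_{in}\in L^2(\equilibrium^{-1}\rd v\,\rd x)$ by assumption (note that \eqref{IC} gives $\eta_{in}\equilibrium+g_{in}=f_{in}$, so $E_f(0)^2=\int\int f_{in}^2/\equilibrium$).

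For $E_\eta$ I would run a direct energy estimate on the second equation of \eqref{split0}. Multiplying $\partial_t\eta=-(-\Delta_x)^s\eta$ by $\eta\equilibrium$ and integrating in $x$ and $v$ gives
\begin{equation*}
\half\frac{\rd}{\rd t}E_\eta^2 = -\int_\RR\equilibrium(v)\left(\int_\RR \eta\,(-\Delta_x)^s\eta\,\rd x\right)\rd v = -\int_\RR\equilibrium(v)\,\big\|(-\Delta_x)^{s/2}\eta\big\|_{L^2_x}^2\,\rd v \le 0,
\end{equation*}
using that $(-\Delta_x)^s$ is self-adjoint and nonnegative on $L^2_x$ for each fixed $v$ together with $\equilibrium\ge 0$; hence $E_\eta(t)\le E_\eta(0)$. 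To see $E_\eta(0)<\infty$, recall from \eqref{IC} that $\eta_{in}=\rho_{in}(x+\eps v)$, so for each fixed $v$ the substitution $y=x+\eps v$ gives $\int_\RR \eta_{in}^2\,\rd x=\|\rho_{in}\|_{L^2}^2$ independently of $v$; integrating against $\equilibrium$, which has unit mass, yields $E_\eta(0)^2=\|\rho_{in}\|_{L^2}^2$. Finally $\rho_{in}=\average{f_{in}}$, and Cauchy--Schwarz with $\int\equilibrium=1$ gives $\|\rho_{in}\|_{L^2}^2\le \int\int f_{in}^2/\equilibrium\,\rd v\,\rd x=E_f(0)^2<\infty$. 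Combining the three displays yields $E_g(t)\le E_f(0)+\|\rho_{in}\|_{L^2}$, uniformly in $t$.

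I expect the only real care to lie in justifying the formal manipulations rather than in any conceptual difficulty, since the heart of the argument is just the triangle inequality. Specifically, differentiating $E_\eta^2$ under the integral sign, using Fubini to pull the $v$-integration through the $(-\Delta_x)^s$ action (exactly the interchange established in Lemma~\ref{lemma: interchange}), and invoking the self-adjointness and positivity of $(-\Delta_x)^s$ all require sufficient regularity and integrability of $\eta$. This is inherited from $\rho_{in}\in L^2$ through the smoothing of the fractional heat flow \eqref{eqnh}, via $\eta(t,x,v)=h(t,x+\eps v)$, so these steps can be made rigorous, if tediously, by the standard mollification/approximation argument for the fractional heat semigroup.
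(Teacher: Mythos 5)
Your proof is correct and follows essentially the same route as the paper's: an energy estimate obtained by multiplying the $\eta$ equation by $\eta\equilibrium$ and using the nonnegativity of the quadratic form $\int \eta\,(-\Delta_x)^s\eta\,\rd x$ (the paper via the symmetric double-integral form, you via self-adjointness of $(-\Delta_x)^{s/2}$), then an elementary inequality combining this with the $E_f$ bound from Proposition~\ref{prop:energy1} (your triangle inequality $E_g \le E_f + E_\eta$ versus the paper's $E_g^2 \le 2E_f^2 + 2E_\eta^2$). Your explicit verification that the initial energies are finite, namely $E_\eta(0) = \|\rho_{in}\|_{L^2} \le E_f(0) < \infty$ via the shift $y = x+\eps v$ and Cauchy--Schwarz, is a worthwhile detail that the paper leaves implicit.
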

\begin{proof}
Multiply the second equation in \eqref{split0} with $\eta \equilibrium$ and integrate in $x$ and $v$, we get
\begin{align*}
    \frac{\eps^{2s}}{2}  \partial_t \int \int \eta^2 \equilibrium \rd x \rd v = - \int \int \eta(-\Delta_x)^s \eta  \rd x \equilibrium \rd v 
\end{align*}
From the definition \eqref{def_fl}, it is straightforward to see that $\int \eta (-\Delta_x)^s \eta \rd x = \half \int \int \frac{(\eta(x)-\eta(y))^2}{|v-w|^{1+2s}} rd x \rd y  \geq 0$. Therefore, $\frac{d}{dt} E_\eta \leq 0$. Then from 
\[
E_g^2 = \int \int \frac{(f-\eta \equilibrium)^2}{\equilibrium} \rd x \rd v \leq 2 \int \int  \frac{f^2 + \eta^2 \equilibrium^2}{ \equilibrium} \rd x \rd v = 2 E_f^2 + 2 E_\eta^2\,,
\]
we get the uniform boundedness of $E_g$. 
\end{proof}

Numerically, we propose the following semi-discrete scheme to \eqref{split0}:
\begin{subequations}\label{eqn: semi_scheme}
\begin{numcases}{}
\frac{\eps^{2s}}{\Delta t} (g^{*}-g^n)  = \Lop^s(g^*) - \gamma g^* - I(\eta^n,\equilibrium)  , \label{semis1} \\
 \frac{\eps^{2s}}{\Delta t}(g^{n+1}-g^*)  + \eps v  \partial_x g^{n+1} = \gamma  g^{n+1}  \,, \label{semis2} \\
 \frac{1}{\Delta t} (\eta^{n+1}-\eta^n)  =  - (-\Delta_x)^s \eta^n \,, \label{semi3}
\end{numcases}
\end{subequations}
where $\gamma$ is a positive constant. 
As opposed to directly applying an implicit-explicit discretization to the first equation in \eqref{split0}, i.e., 
\[
\frac{\eps^{2s}}{\Delta t} (g^{n+1}-g^n) + \eps v \partial_x g^{n+1}  = \Lop^s( g^{n+1}) - I(\eta^n,\equilibrium) \,,
\]
we conduct an operator splitting here, due to three reasons. One is that the L\'evy-Fokker-Planck operator $\Lop^s$ has nonzero null space, and therefore the inversion in this equation will become stiff for small $\eps$ (see also Table~\ref{table:cda}). The augmented term $-\gamma g^*$ will then shift the spectrum of the to-be-inverted operator and therefore remove the ill-conditioning. The second is that we need the magnitude of $g$ to remain small for small $\eps$ in order to preserve the asymptotic property, and this requirement is fulfilled in \eqref{semis2}, which warrants $g^{n+1}$ to be of order $\eps^{2s}$ as $\eps \rightarrow 0$ (More details is shown in the proof of Proposition \ref{prop:asy}). The third is the computational efficiency. Thanks to the splitting, one no longer needs to invert operators in $x$ and $v$ simultaneously. Instead, only an inversion in $v$ is needed in solving \eqref{semis1}, whereas in \eqref{semis2} only an inversion in $x$ is needed, and this inversion can be efficiently accomplished via either sweeping or the fast Fourier transform.

The rest of this subsection is devoted to proving the asymptotic property of \eqref{eqn: semi_scheme}. First let us introduce the following lemma that describes the smoothing effect of fractional diffusion equation.
\begin{lemma}\label{lemma:fd_reg}
Consider the initial value problem 
\begin{equation} \label{eqn0108}
\partial_t u + (-\Delta_x)^s u = 0 \,\qquad 
u(0,x) = u_{in}(x) \,.
\end{equation}
If $u_{in}(x) \in W^{2,1}(\RR) \cap C^2(\RR)$, then $u(t,\cdot) \in  W^{2,1}(\RR) \cap C^{\infty}(\RR) $ for $t>0$.
\end{lemma}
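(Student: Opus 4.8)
The plan is to solve \eqref{eqn0108} explicitly via the Fourier transform in $x$ and then read off both regularity statements from the two ways of distributing derivatives across a convolution. Taking the Fourier transform turns \eqref{eqn0108} into the pointwise ODE $\partial_t \hat u(t,\xi) = -|\xi|^{2s}\hat u(t,\xi)$, whose solution is $\hat u(t,\xi) = e^{-t|\xi|^{2s}}\hat u_{in}(\xi)$. Equivalently $u(t,\cdot) = p_t * u_{in}$, where the fractional heat kernel $p_t$ is defined by $\hat p_t(\xi) = e^{-t|\xi|^{2s}}$. I would first record the two facts about $p_t$ that drive everything: it is a probability density, so $p_t \ge 0$ and $\|p_t\|_{L^1}=\int_\RR p_t \, \rd x = \hat p_t(0)=1$; and it is smooth with every derivative integrable, since $\widehat{\partial_x^k p_t}(\xi)=(i\xi)^k e^{-t|\xi|^{2s}}\in L^1_\xi$ for all $k$ and every $t>0$.

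For the $W^{2,1}$ claim I would push the derivatives onto the data. Because $u_{in}\in W^{2,1}(\RR)$, for $k\le 2$ one has $\partial_x^k u(t,\cdot) = p_t * \partial_x^k u_{in}$, and Young's convolution inequality gives $\|\partial_x^k u(t,\cdot)\|_{L^1}\le \|p_t\|_{L^1}\,\|\partial_x^k u_{in}\|_{L^1}=\|\partial_x^k u_{in}\|_{L^1}<\infty$. Summing over $k=0,1,2$ yields $u(t,\cdot)\in W^{2,1}(\RR)$, in fact with a bound uniform in $t$.

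For the $C^\infty$ claim I would instead push all derivatives onto the kernel: $\partial_x^k u(t,\cdot) = (\partial_x^k p_t)*u_{in}$. Since $\partial_x^k p_t \in L^\infty$ (its Fourier transform is $L^1$) and $u_{in}\in L^1$, each such convolution is a bounded continuous function, so $u(t,\cdot)$ possesses continuous derivatives of every order. Equivalently, and cleaner to verify, I would note that $\widehat{\partial_x^k u}(t,\xi)=(i\xi)^k e^{-t|\xi|^{2s}}\hat u_{in}(\xi)$ lies in $L^1_\xi$ for every $k$, because $\hat u_{in}\in L^\infty$ (from $u_{in}\in L^1$) while $|\xi|^k e^{-t|\xi|^{2s}}\in L^1$, so Fourier inversion produces a continuous representative of $\partial_x^k u(t,\cdot)$.

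The routine parts are the two convolution estimates. The step carrying the real content is the superpolynomial integrability $|\xi|^k e^{-t|\xi|^{2s}}\in L^1(\RR)$ for all $k$ and every fixed $t>0$: this is precisely the fractional smoothing effect, and it genuinely requires $t>0$, since the bound degenerates as $t\to0$, reflecting that no gain of regularity occurs at the initial instant. A secondary technical point is justifying the differentiation under the integral needed to validate the identities $\partial_x^k u = (\partial_x^k p_t)*u_{in}$ and the Fourier-inversion formulas; I would handle this by dominated convergence, using the integrable dominating functions just produced. I note in passing that the hypothesis $u_{in}\in C^2(\RR)$ is not needed for this argument, as $u_{in}\in W^{2,1}(\RR)$ already supplies everything required.
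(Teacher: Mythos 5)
Your proposal is correct and takes essentially the same route as the paper: solve \eqref{eqn0108} by Fourier transform, write $u(t,\cdot)$ as a convolution with the fractional heat kernel, get the $W^{2,1}$ bound by putting the derivatives on the data and applying Young's inequality, and get $C^\infty$ by putting them on the kernel (the paper quotes the kernel's self-similar profile and tail decay from the literature, while you verify the needed kernel properties on the Fourier side). One small remark: your preliminary claim that every derivative of $p_t$ is \emph{integrable} is not actually established by the Fourier argument you give (which yields boundedness and continuity), but this is harmless since your $C^\infty$ step only uses $\partial_x^k p_t \in L^\infty$ together with $u_{in}\in L^1$.
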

\begin{proof}
Using the Fourier transform for $x$, one writes down the solution to \eqref{eqn0108} as
\begin{equation*}
    \hat u(t,\xi) = \hat u_{in}(\xi) e^{-|\xi|^{2s} t} : = \hat u_{in} (\xi) \hat K_s(t,\xi)\,.
\end{equation*}
Changing back to $x$, one has 
\[
u(t,x) = \int_{\RR} K_s(t,x-y)u_0(y) \rd y\,, \qquad K_s(t,x) = \frac{1}{t^{1/2s}} F \left(\frac{|x|}{t^{1/2s}} \right)\,,
\]
where $F$ is positive and decreasing, and it behaves like $F(r) \sim r^{-(1+2s)}$ at infinity (see also \cite{vazquez2014recent} for further discussion). It is easy to see $K_s(t, \cdot) \in L^1 (\mathbb R) \cap C^\infty(\mathbb R)$ for $t>0$. Therefore $u(t,\cdot) \in C^\infty(\mathbb R)$. Further, by Young's convolution inequality, we have 
\[
\|u(t,\cdot) \|_1 \leq \|K_s \|_1 \|u_0\|_1\,,\quad 
\|\partial^2_x u(t,\cdot) \|_1 \leq  \|K_s \|_1 \| \partial^2_x u_0\|_1\,.
\]
\end{proof}

The proposition on the asymptotic property of the splitting scheme is in order. 
\begin{proposition} \label{prop:asy}
Consider system $\eqref{eqn:111}$ with initial data $\average{f_{in}} \in W^{2,1}(\RR)$. Let $\eta^n$ and $g^n$ be the solution to \eqref{eqn: semi_scheme}, where  $\eta^0 = \rho_{in}(x+\eps v)$ and $g^0 = f_{in} - \eta_{in} \equilibrium$. Then the numerical solution 
\begin{equation*} 
\rho^n = \average{f^n} = \average{\eta^n \equilibrium + g^n} 
\end{equation*}
satisfies 
\begin{equation} \label{eqn01082}
  \frac{\rho^{n+1}- \rho^n}{\Delta t}  =  (-\Delta_x)^s \rho^n  
\end{equation}
as $\eps \rightarrow 0$.
\end{proposition}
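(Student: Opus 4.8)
The plan is to split the numerical density into a macroscopic piece carried by $\eta$ and a remainder coming from $g$, and to show that the first piece satisfies the discrete fractional heat update exactly while the second is negligible as $\eps\to0$. Concretely, I would write $\rho^n=\average{\eta^n\equilibrium}+\average{g^n}=:\rho_\eta^n+\bar g^n$. Since the update \eqref{semi3} for $\eta$ is decoupled from $g$ and is precisely an explicit fractional-heat step, I expect $\rho_\eta^n$ to inherit the limiting scheme exactly, so the whole task reduces to controlling $\bar g^n$.

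For the macroscopic piece I would multiply \eqref{semi3} by $\equilibrium(v)$ and integrate in $v$. Because $\equilibrium=\equilibrium(v)$ is independent of $x$, one has $\equilibrium\,(-\Delta_x)^s\eta^n=(-\Delta_x)^s(\equilibrium\,\eta^n)$, and Lemma~\ref{lemma: interchange} lets me pull the average through the operator. Using $\int\equilibrium\,\rd v=1$ this gives the exact (all-$\eps$) identity $\frac{\rho_\eta^{n+1}-\rho_\eta^n}{\Delta t}=-(-\Delta_x)^s\rho_\eta^n$, i.e.\ $\rho_\eta^n$ obeys the same update as $\eta^n$. To invoke Lemma~\ref{lemma: interchange} I need $\eta^n=h^n(x+\eps v)$ to have the stated $L^1_vW^{2,1}_x\cap L^1_vC^2_x$ regularity; this is where Lemma~\ref{lemma:fd_reg} enters, since $h^n$ is the fractional-heat evolution of $\average{f_{in}}\in W^{2,1}(\RR)$ and therefore stays in $W^{2,1}\cap C^2$.

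The core of the argument is to show $\bar g^n\to0$ together with $(-\Delta_x)^s\bar g^n\to0$. First I would treat the initial layer: from \eqref{IC}, $\bar g^0=\average{f_{in}}-\int\rho_{in}(x+\eps v)\equilibrium(v)\,\rd v\to\average{f_{in}}-\rho_{in}(x)=0$ by dominated convergence. For $n\ge0$ I would exploit the two-stage structure of \eqref{eqn: semi_scheme}. Solving \eqref{semis1}, the operator $\tfrac{\eps^{2s}}{\Delta t}+\gamma-\Lop^s$ is invertible uniformly in $\eps$ because the shift $\gamma>0$ moves the nontrivial null space of $\Lop^s$ off zero, so $g^*$ is controlled by its right-hand side $\tfrac{\eps^{2s}}{\Delta t}g^n-I(\eta^n,\equilibrium)$. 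Here $g^n$ is bounded (energy stability, cf.\ Proposition~\ref{prop:energy2}) and $I(\eta^n,\equilibrium)=O(\eps)$: writing $I$ as in Lemma~\ref{lemma:fl_comp}, both factors $\eta^n(v)-\eta^n(w)=h^n(x+\eps v)-h^n(x+\eps w)$ and $\equilibrium(w)-\equilibrium(v)$ vanish at $w=v$, the first being $O(\eps|v-w|)$ by Lipschitzness of $h^n$, which tames the diagonal singularity, while the far field is integrable against the fat tail of $\equilibrium$. Hence $g^*=O(1)$. Feeding this into \eqref{semis2}, rewritten as $(\tfrac{\eps^{2s}}{\Delta t}-\gamma)g^{n+1}+\eps v\,\partial_x g^{n+1}=\tfrac{\eps^{2s}}{\Delta t}g^*$, the skew transport term preserves the $L^2_x$ norm and the dominant $-\gamma g^{n+1}$ makes the inverse bounded, so the prefactor $\tfrac{\eps^{2s}}{\Delta t}$ forces $g^{n+1}=O(\eps^{2s})$. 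Thus $\bar g^n=O(\eps^{2s})\to0$ for $n\ge1$, and combining the exact macroscopic identity with $\bar g^{n+1},\bar g^n,(-\Delta_x)^s\bar g^n\to0$ yields the claim.

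I expect the delicate steps to be the following. First, the uniform-in-$\eps$ invertibility bound for $\tfrac{\eps^{2s}}{\Delta t}+\gamma-\Lop^s$ in the weighted space where $\Lop^s$ is dissipative, since $\Lop^s$ couples a drift with a fat-tailed fractional diffusion. Second, the estimate $I(\eta^n,\equilibrium)=O(\eps)$, which must balance the $|v-w|^{-(1+2s)}$ singularity near the diagonal against the slow decay of $\equilibrium$ at infinity. A secondary technical point is justifying $(-\Delta_x)^s\bar g^n\to0$, for which I would carry along an $x$-regularity bound on $g^n$, uniform in $\eps$, so that the nonlocal operator passes to the limit.
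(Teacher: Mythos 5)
Your proposal is correct and follows essentially the same route as the paper: an exact discrete fractional-heat identity for $\average{\eta^n\equilibrium}$ obtained from \eqref{semi3} via Lemma~\ref{lemma: interchange} and Lemma~\ref{lemma:fd_reg}, followed by showing $g^{n+1}=\mathcal{O}(\eps^{2s})$ by combining a uniform-in-$\eps$ resolvent bound for the $\gamma$-shifted operator in \eqref{semis1} (the paper cites a contractive estimate plus Hille--Yosida where you argue via the spectral shift) with the $\eps^{2s}/\Delta t$ prefactor in the transport step \eqref{semis2} (the paper integrates along characteristics in $L^\infty_x$ where you use an $L^2_x$ energy argument, but the mechanism is identical, and your auxiliary claim $I(\eta^n,\equilibrium)=\mathcal{O}(\eps)$ is not needed---boundedness suffices). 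Note only that your macroscopic identity carries the sign $\frac{\rho^{n+1}-\rho^n}{\Delta t}=-(-\Delta_x)^s\rho^n$, which is the one consistent with the limit \eqref{eqn: limit_system}; the opposite sign in \eqref{eqn01082} is a typo in the statement that the paper's own proof also propagates.
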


\begin{proof}
From the reconstruction formula, we have
\begin{align*}
    \frac{\rho^{n+1}- \rho^n}{\Delta t} &  = \average{\frac{\eta^{n+1}-\eta^n}{\Delta t} \equilibrium } + \average{\frac{g^{n+1}-g^n}{\Delta t}}  \\
    & = - \average{(-\Delta_x)^s \eta^n \equilibrium} + \average{\frac{g^{n+1}-g^n}{\Delta t}}  \\
    & =  (-\Delta_x)^s \average{f^{n}-g^{n}} + \average{\frac{g^{n+1}-g^n}{\Delta t}} , \\
    & = (-\Delta_x)^s \rho^n - (-\Delta_x)^s \average{g^n}  + \average{\frac{g^{n+1}-g^n}{\Delta t}} \,,
\end{align*}
where the third equality uses lemma~\ref{lemma: interchange} and lemma~\ref{lemma:fd_reg}, namely, $\average{(-\Delta_x)^s \eta ^n \equilibrium} = (-\Delta_x)^s \average{\eta^n \equilibrium}$. Then to show \eqref{eqn01082}, it amounts to show that the magnitude of $g^n$ vanishes as $\eps$ approaches zero. First, from \eqref{semis1}, one sees that 
\[
\left( \frac{\eps^{2s}}{\Delta t} + \gamma + \Lop^s \right) g^* =  \frac{\eps^{2s}}{\Delta t} g^n - I(\eta^n, \equilibrium)\,.
\]
From the contractive estimate in Corollary 3.1 of \cite{biler2003generalized} and Hille-Yosida Theorem, we have, for positive $\gamma$, 
\begin{equation} \label{estimate01}
    \|g^*(x,\cdot)\|_{L^\infty_v} \lesssim \|I(\eta^n, \equilibrium) \|_{L_v^\infty} + \frac{\eps^{2s}}{\Delta t} \|g^n\|_{L_v^\infty}\,.
\end{equation}
Further, from \eqref{semis2}, we have, for $v<0$,
\begin{align*}
g^{n+1}(x,v) = \frac{\eps^{2s-1}}{v \Delta t  } \int_{-\infty}^x e^{ \frac{ \frac{\eps^{2s}}{\Delta t}-\gamma }{\eps v} (y-x)} g^*(y,v) \rd y 
\end{align*}
Therefore,
\begin{align} \label{estimate02}
    |g^{n+1}(x,v)| \leq \frac{\eps^{2s-1}}{|v| \Delta t  }  \|g^*(\cdot, v)\|_{L^\infty_x} \int_{-\infty}^x e^{ \frac{ \frac{\eps^{2s}}{\Delta t}-\gamma }{\eps v} (y-x)} \rd y  
    \leq \frac{2 \eps^{2s}}{\Delta t}  \|g^*(\cdot, v)\|_{L^\infty_x} \,.
\end{align}
The case with $v>0$ can be estimated similarly. Then a combination of \eqref{estimate01} and \eqref{estimate02} immediately leads to $\|g^{n+1}\|_{L^\infty_{x,v}} \lesssim \mathcal{O}(\eps^{2s})$. 
\end{proof}

\subsection{Fully discrete scheme and implementation}
Now we briefly discuss the implementation of scheme $\eqref{eqn: semi_scheme}$. Using the same notation as mentioned at the beginning of this section, we denote the numerical approximations $\eta^n_{i,j}$, $g^n_{i,j}$ as $ \eta^n_{i,j} \approx \eta(t^n,x_i,q_j)$ and $g^n_{i,j} \approx g(t^n,x_i,q_j)$, with $0\leq i \leq N_x-1$, $0\leq j \leq N_v-1$. For a fixed $x$-index $i$, let 
\begin{align*}
& {\bf \eta^n_i} = (\eta^n_{i,1}, \eta^n_{i,2}, \cdots, \eta^n_{i,N_v-1})^T, \qquad
{\bf g^n_i} = (g^n_{i,1}, g^n_{i,2}, \cdots, g^n_{i,N_v-1})^T \,, \qquad
\\ &
{\bf \mathcal M} = (\mathcal M_1, \cdots, \mathcal M_{N_v-1})^T\,, \qquad 
{\bf \eta^n_i \mathcal M} = (\eta^n_{i,1}\mathcal M_1, \cdots, \mathcal \eta^n_{i,N_v-1} M_{N_v-1})^T\,.
\end{align*}
For a fixed $q$-index $j$, let 
\[
{\bf \eta^n_j} = (\eta^n_{1,j}, \eta^n_{2,j}, \cdots, \eta^n_{N_x-1,j})^T, \qquad
{\bf g^n_j} = (g^n_{1,j}, g^n_{2,j}, \cdots, g^n_{N_x-1,j})^T\,.
\]

First we compute $I(\eta^n, \equilibrium)$. According to its definition
\[
I(\eta^n, \equilibrium) = (-\Delta_v)^s (\eta^n \equilibrium) - \equilibrium (-\Delta_v)^s \eta^n  -  \eta^n (-\Delta_v)^s \equilibrium , 
\]
this is simply accomplished by applying $\Lmat^s$ defined in \eqref{LsD} to ${\bf \eta_i^n \equilibrium}$, ${\bf \eta_i^n}$, and ${\bf \equilibrium}$, respectively, {\it for a fixed spatial index $i$}. Then to treat the spatial discretization in \eqref{semis2} and \eqref{semi3}, we use the Fourier spectral method with periodic boundary condition. Note that the transport term in \eqref{semis2} is treated implicitly, so the stability is guaranteed. Moreover, unlike in $v$ direction, where one needs to consider a fat tail distribution due to the kernel of $\Lop^s$, we can consider a sufficiently decaying profile in $x$ and therefore the Fourier spectral method can be used here. 

To summarize, for given $\eta^n_{i,j}$ and $g^n_{i,j}$, we have

\textbf{Step 1} Compute $\bf g_i^*$ by
\[
{\bf g_i^*} = \left[ \left(\frac{\varepsilon^{2s}}{\Delta t} + \gamma \right) \Imat - \Pmat^s \right]^{-1} \left(\frac{\varepsilon^{2s}}{\Delta t} {\bf g_i^n} - I({\bf \eta_i}^n \equilibrium)\right) \,, \qquad {\bf i} = 0, 1, \cdots, N_x-1  \,;
\]

\textbf{Step 2} Compute ${\bf g_{j}^n}$ by inverting 
\[
\eps^{2s} \frac{{\bf \hat g_{j}^{n+1}} - {\bf \hat g_{j}^* } }{\Delta t}  + \eps v(q_j) \Dmat_x    {\bf \hat g_{j}^{n+1}}  = \frac{1}{2} {\bf \hat g_{j}^{n+1}}\,, \qquad {\bf j} = 0,1, \cdots, N_v-1\,,
\]
where ${\bf \hat g_{j}}$ is the Fourier transform of ${\bf g_j}$, and $\Dmat_x$ is the diagonal matrix with elements $k = 0, 1, \cdots, N_x-1$.

\textbf{Step 3} Compute ${\bf \eta_0^{n+1}}$ via 
\[
{\bf \hat \eta_0^{n+1}} = (\Imat + \Delta t \Dmat_s)^{-1} {\bf \hat\eta_0^n},
\qquad {\bf j} = 0,1, \cdots, N_v-1\,,
\]
where ${\bf \hat \eta_0}$ is the Fourier transform of ${\bf \eta_0}$, and $\Dmat_s$ is the diagonal matrix with elements $-|k|^{2s}$ with $k$ ranging from 0 to $N_x-1$.

\section{Numerical examples}
In this section, we present several numerical examples to check the accuracy and efficiency of schemes $\eqref{eqn: homo_scheme_ori}$ and $\eqref{eqn: semi_scheme}$.  Periodic boundary condition is always used in $x$ direction. Unless otherwise specified, we choose $L_v =3$ and $l_{lim}=300$ in computing \eqref{eqn: fl_fourier_approx}, and $\gamma=1$ in \eqref{eqn: semi_scheme}. 

\subsection{Computation of $(-\Delta)^s f$}
We first check the performance of the pseudospectral method in computing  $(-\Delta)^s f$ via \eqref{LsD}. Two specific examples will be considered here: an exponential decay function and a polynomial decay function, both of which have exact form when taken the fractional Laplacian. 
\begin{align}
1) \qquad & f(v)=(1+v^2)^{-\frac{1-2s}{2}},  \quad s \in (0,0.5),  \label{eqn: pl_decay}
\\ &  (-\Delta_v)^s f(v) = - 2^{2s}\Gamma \left(\frac{1+2s}{2} \right) \Gamma \left(\frac{1-2s}{2} \right)^{-1} (1+v^2)^{-\frac{1+2s}{2}}\,. \nonumber 
\\ 2) \qquad & f(v)=e^{-v^2}\,, \label{eqn: exp_decay}
\\ & (-\Delta_v)^s f(v) = - \frac{1}{\sqrt{\pi}}2^{2s}\Gamma \left(\frac{1+2s}{2} \right)    {}_1F_1 \left(\frac{1+2s}{2},0.5;-v^2 \right)\,. \nonumber 
\end{align}
Here $\Gamma$ is the gamma function and $_1F_1$ is the hypergeometric function.

First, a comparison of the numerical solutions with exact solutions is gathered in Fig.~\ref{pl_acc} and \ref{ep_acc}, for \eqref{eqn: pl_decay} and \eqref{eqn: exp_decay}, respectively, where good agreements are observed in both cases. 
\begin{figure}[H]
\centering
{\includegraphics[width=0.45\textwidth]{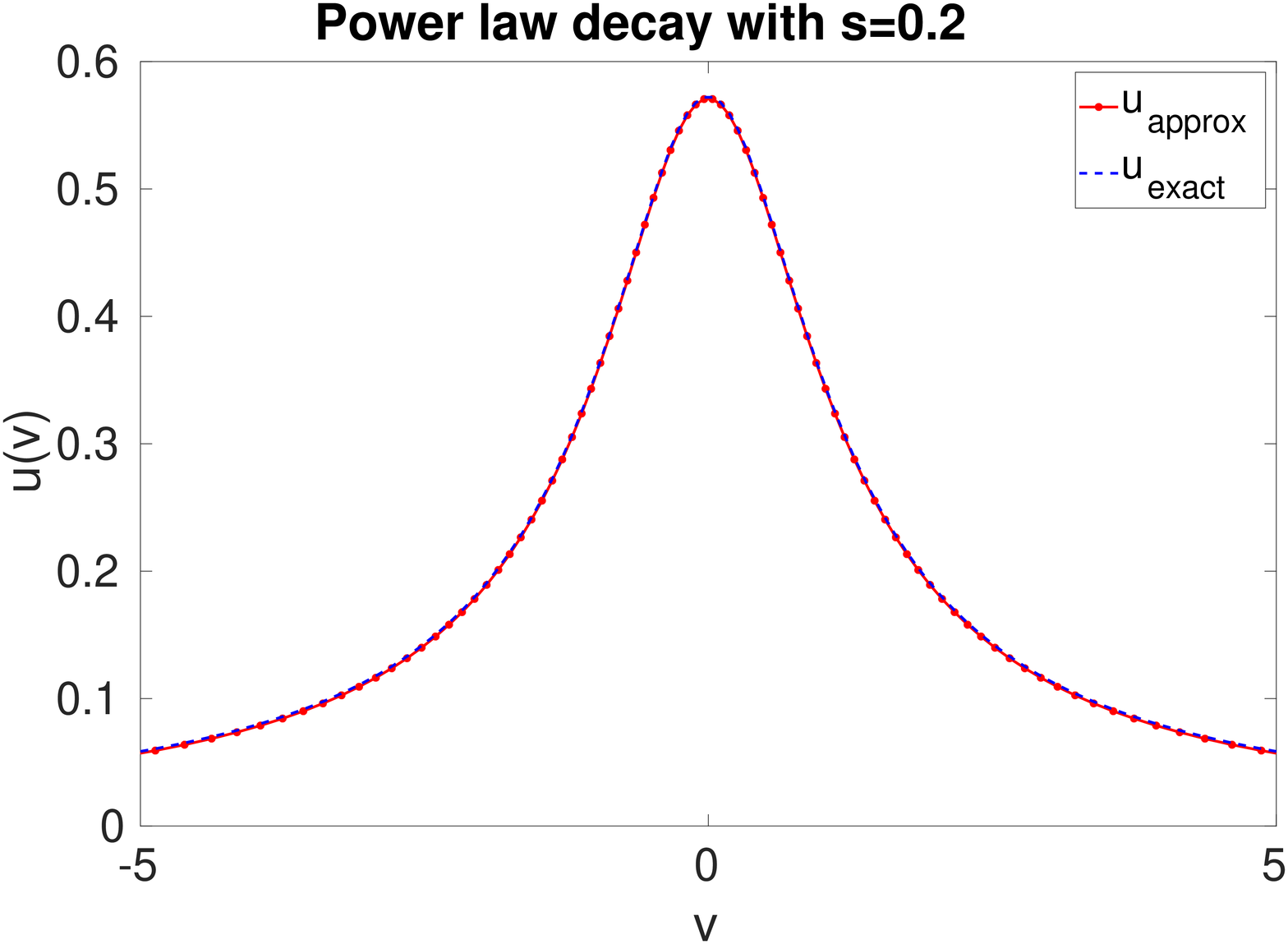}}
{\includegraphics[width=0.45\textwidth]{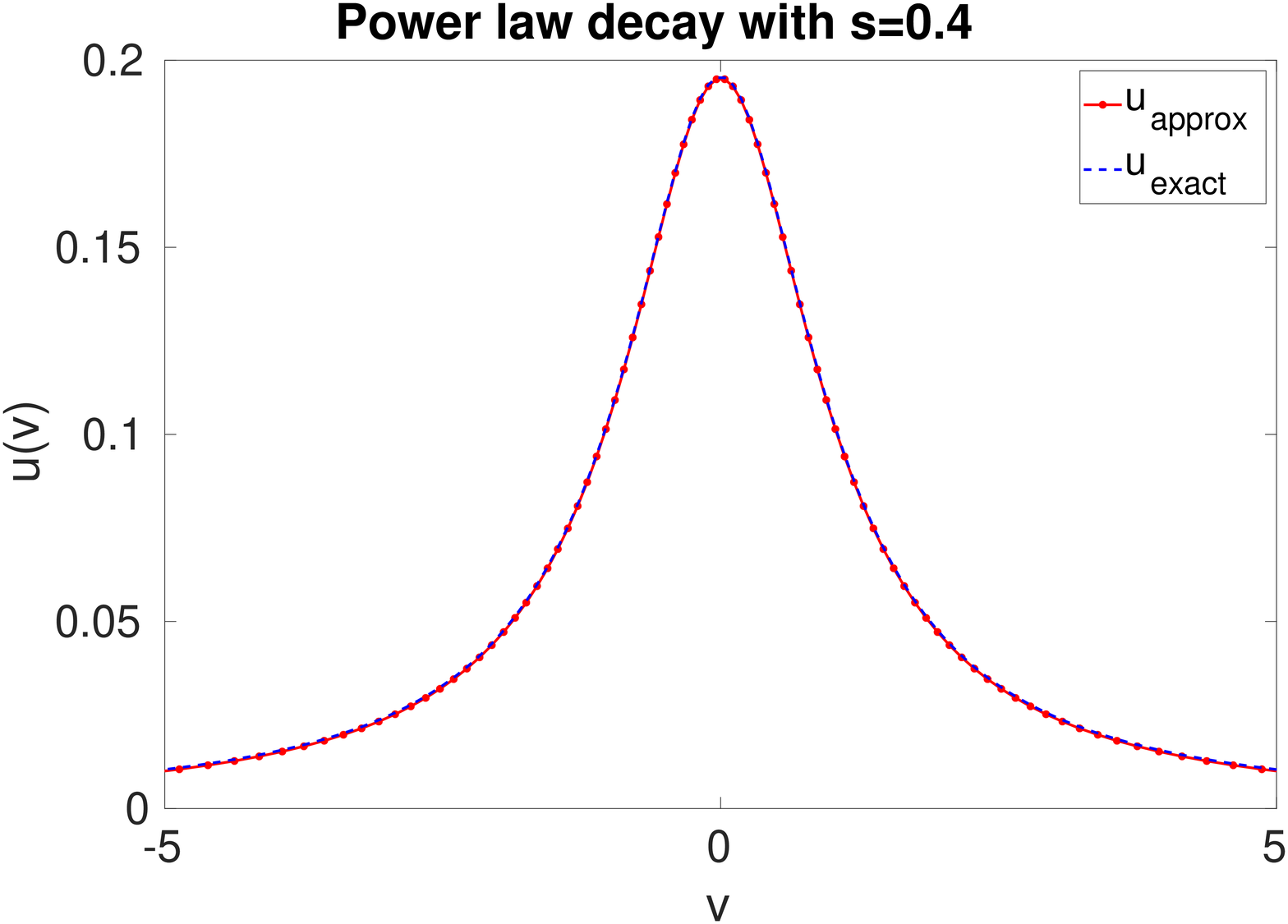}}
\caption{Comparison of numerical approximation and exact expression of $(-\Delta)^s f$ for $f$ in \eqref{eqn: pl_decay}. $N_v=128$ are used for both cases.}
\label{pl_acc}
\end{figure}

\begin{figure}[H]
\centering
{\includegraphics[width=0.31\textwidth]{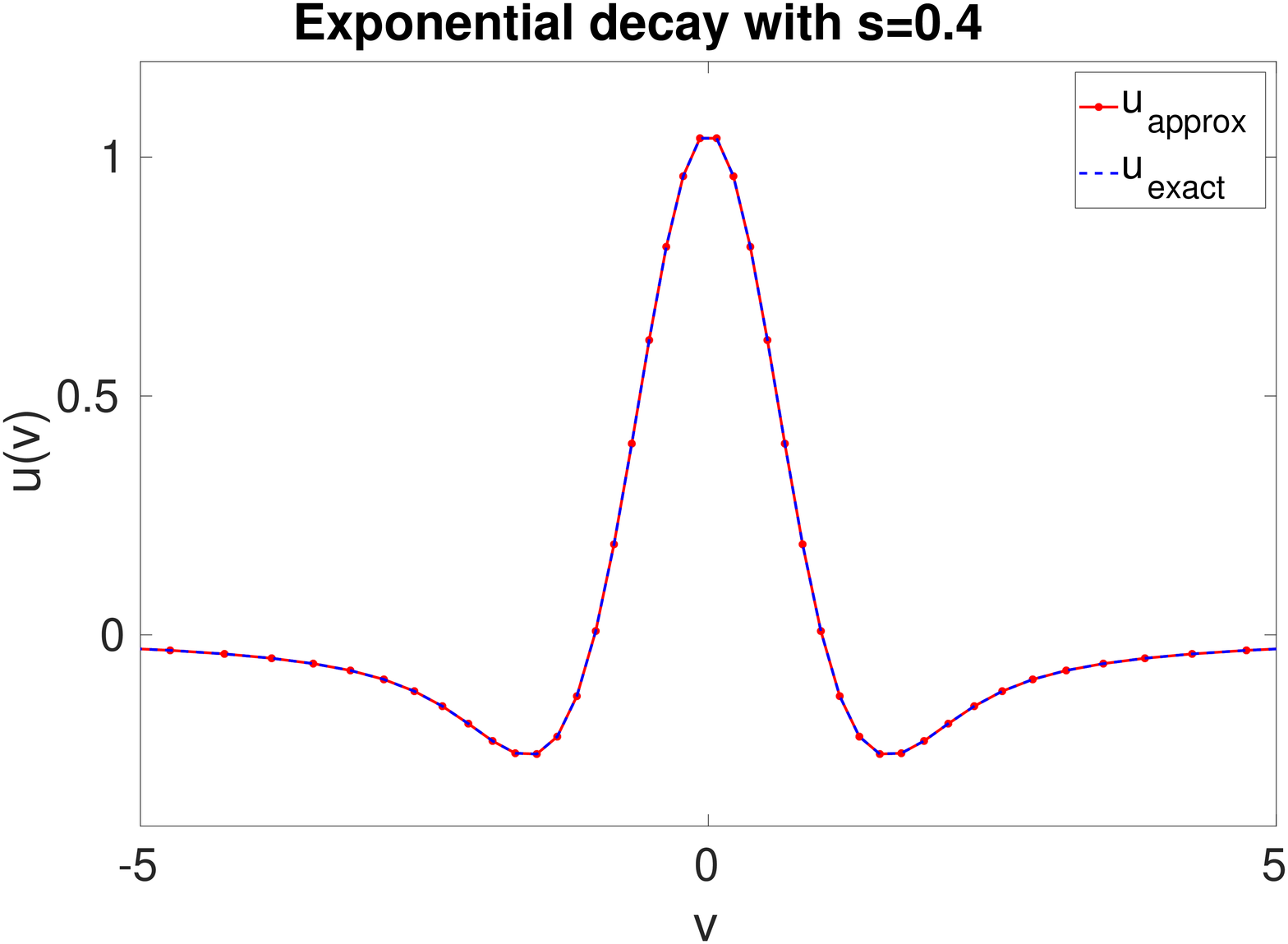}}
{\includegraphics[width=0.31\textwidth]{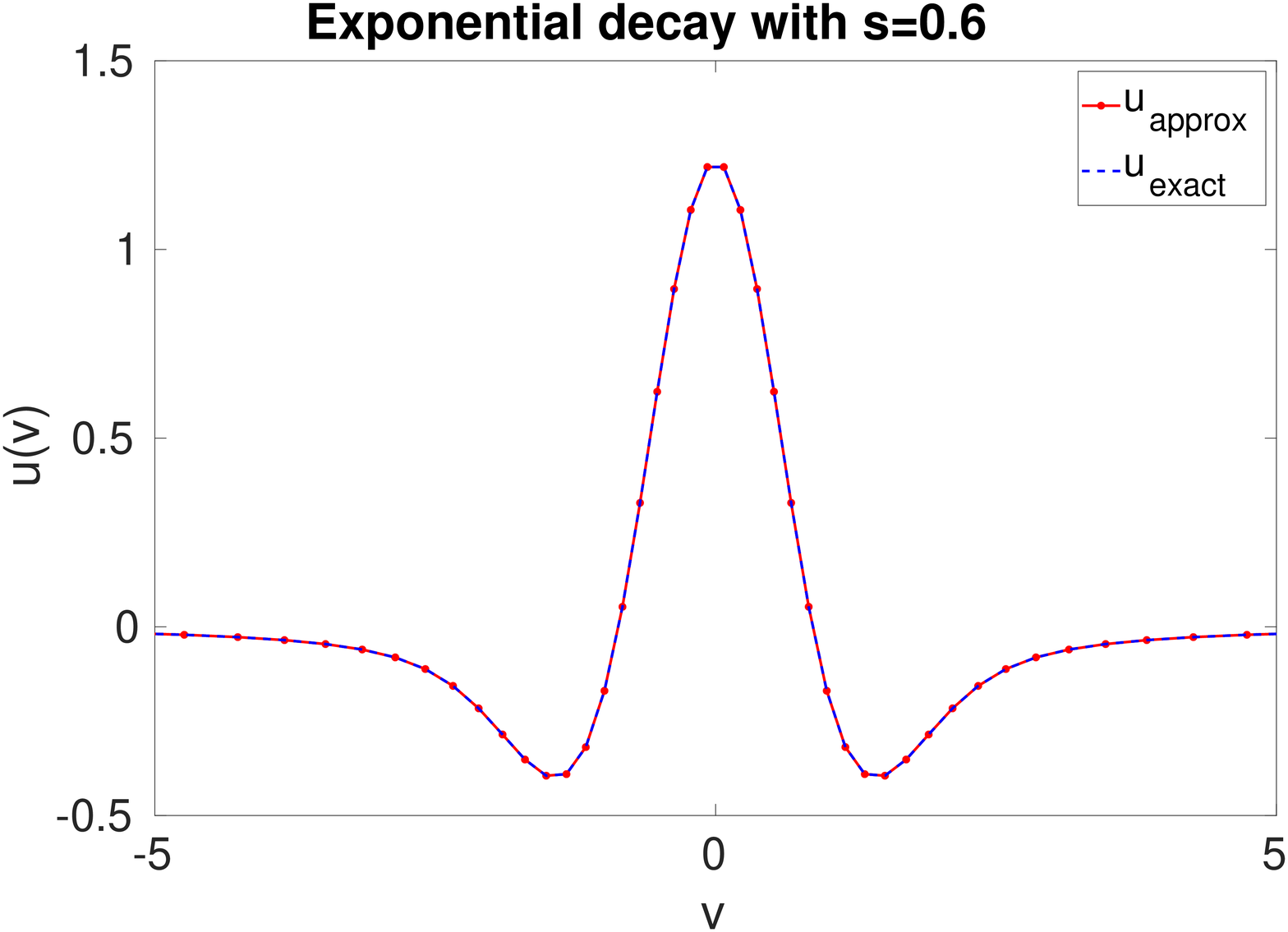}}
{\includegraphics[width=0.31\textwidth]{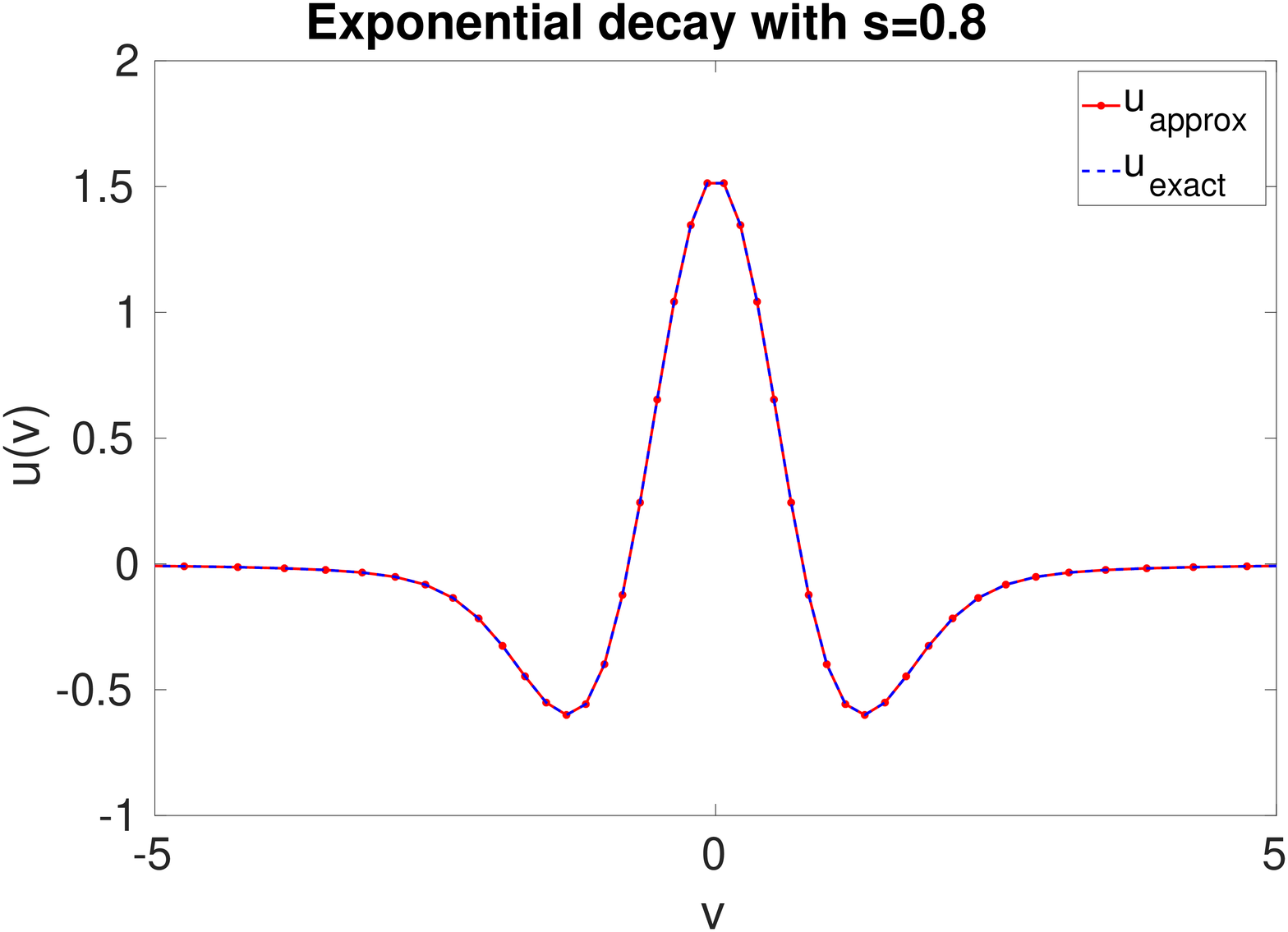}}
\caption{ Comparison of numerical approximation and exact expression of $(-\Delta)^s f$ for $f$ in \eqref{eqn: exp_decay}. $N_v=64$ are used for all cases. }
\label{ep_acc}
\end{figure}

Second, we show relationship between accuracy and the number of modes $N_v$. Given fixed $L_v=3$ and $l_{lim}=300$, Fig.~\ref{fig: error vs Nv} displays the error versus $N_v$ for different $s$, with exponential decay function on the left, and power law decay function on the right. The error is measured in $l^{\infty}$ norm, i.e.
\begin{equation} \label{error}
e_{\infty} = \max_j \| u_j - (-\Delta_v)^s f(v_j)   \|_{\infty}  
\end{equation}
where $u_j$ is the numerical approximation of $(-\Delta_v)^s f$ at $v_j$. One 
 sees that a small number of mode is adequate for the exponential decay case, whereas for the power law decay case, increasing the number of modes leads to better approximation. Moreover, a comparison between these two figures also gives a visual indication on why computing the fractional Laplacian of a slow decaying function is significantly harder than a fast decaying function, as a lot more modes are needed to reach an even lower accuracy criteria. 

\begin{figure}[H]
\centering
{\includegraphics[width=0.45\textwidth]{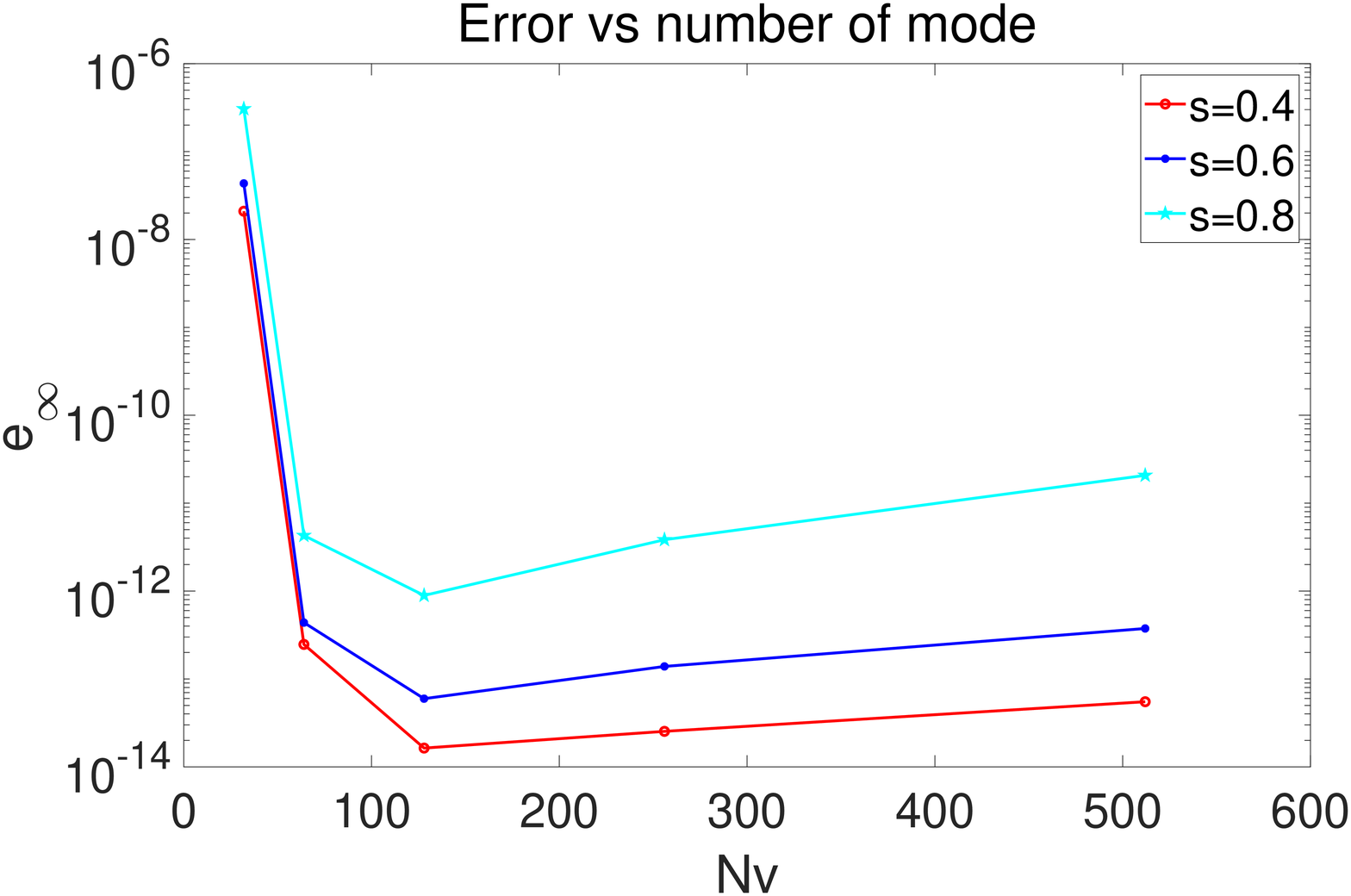}}
{\includegraphics[width=0.45\textwidth]{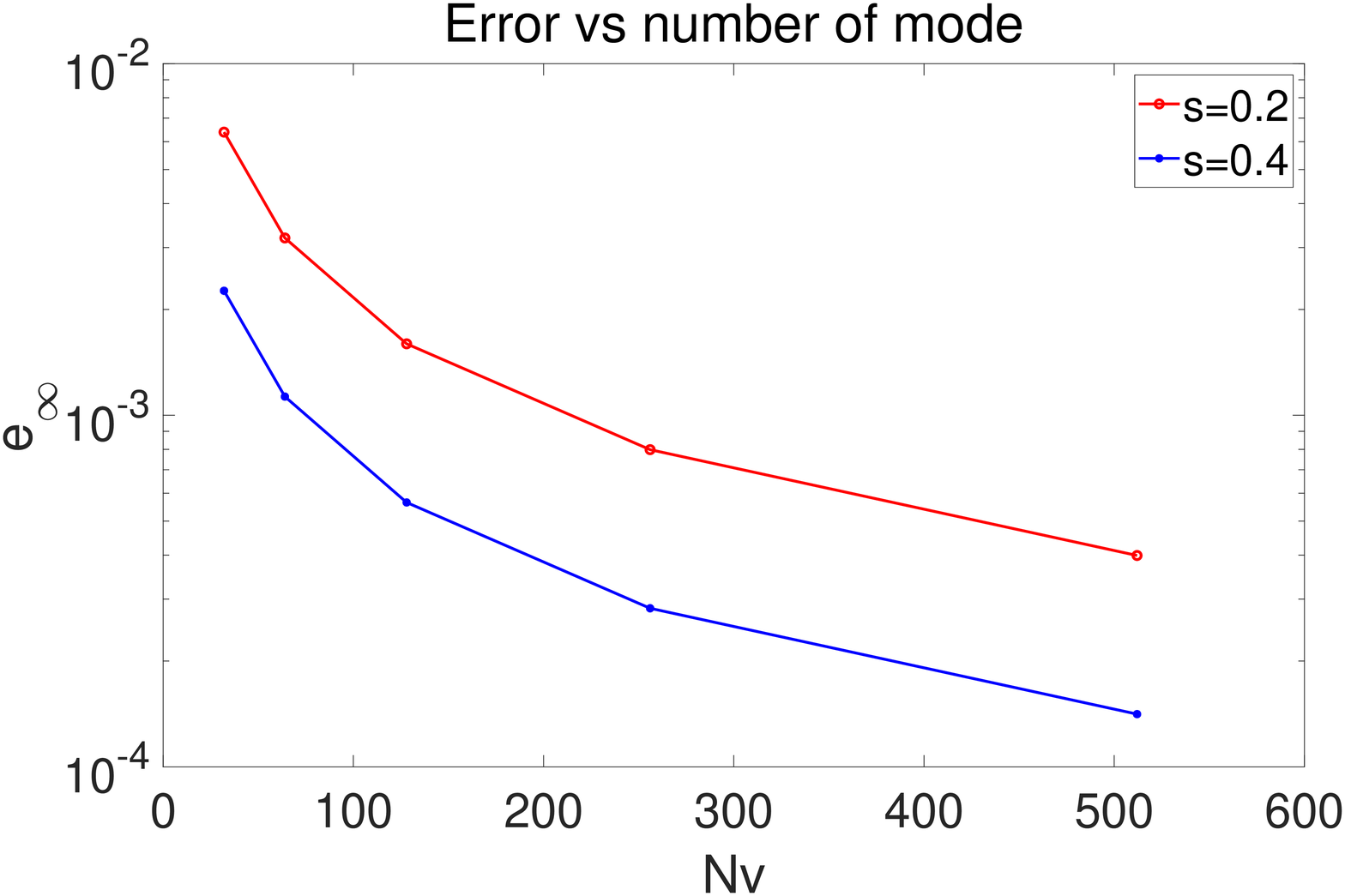}}
\caption{Error \eqref{error} versus $N_v$ with different $s$. The left figure is for exponential decay function $\eqref{eqn: pl_decay}$, and the right is for power law decay function $\eqref{eqn: exp_decay}$.}
\label{fig: error vs Nv}
\end{figure}

\subsection{Spatially homogeneous case}
We restrict our attention to the spatially homogeneous case in this section and consider the following specific example: 
\begin{equation}\label{eqn: numeriacl_ex}
 \partial_t f  = \partial_v (v f) - (-\Delta_v)^s f\,, \qquad 
       f(0,v) =e^{-v^2} \,.
\end{equation}
In order to check the performance of the scheme, two properties of the solution will be considered: 1) convergence towards equilibrium in the long time limit and 2) mass conservation.

\subsubsection{Long time behavior} \label{sec:long time}
As pointed out in \cite{gentil2008levy}, $f(t,v)$  will converge toward the equilibrium $\equilibrium$ exponentially fast. To observe this dynamics numerically, we compute the relative entropy $\int_{\mathbb R} f \ln  \frac{f}{\equilibrium}  \rd v$ as
\begin{equation} \label{eqnHn}
 H^n = \sum_{j=0}^{N_v-1} f^n(q_j) \ln \frac{f^n(q_j)}{\equilibrium(q_j)} w_j , \qquad w_j = \frac{L_v}{(\sin (q_j))^2}\,.   
\end{equation}
The numerical equilibrium, denoted as $f_{\infty}$, is obtained when the variation in the solution is negligible, i.e., 
\begin{equation} \label{Nequi}
\sum_j | f_j^{n+1} - f_j^{n} |w_j \Delta q \leq \delta, \qquad w_j = \frac{L_v}{(\sin (q_j))^2}\,,
\end{equation}
and then set $f_{\infty} = f^{n+1}$. Here $\delta$ is a small parameter and chosen to be $10^{-6}$ in our examples.

We first test the case with $s=0.5$, where an explicit form of equilibrium is available: $\equilibrium(v)=\pi^{-0.5}(1+v^2)^{-1}$. The results are collected in Fig~\ref{fig:tail_zp5}. In the left and middle figures, one sees that the numerical equilibrium coincides with analytic one, both in the bulk area and in the tail. On the right, the evolution of relative entropy \eqref{eqnHn} in time is plotted, and exponential rate of convergence is confirmed.  

\begin{figure}[!ht]
\centering
{\includegraphics[width=0.3\textwidth]{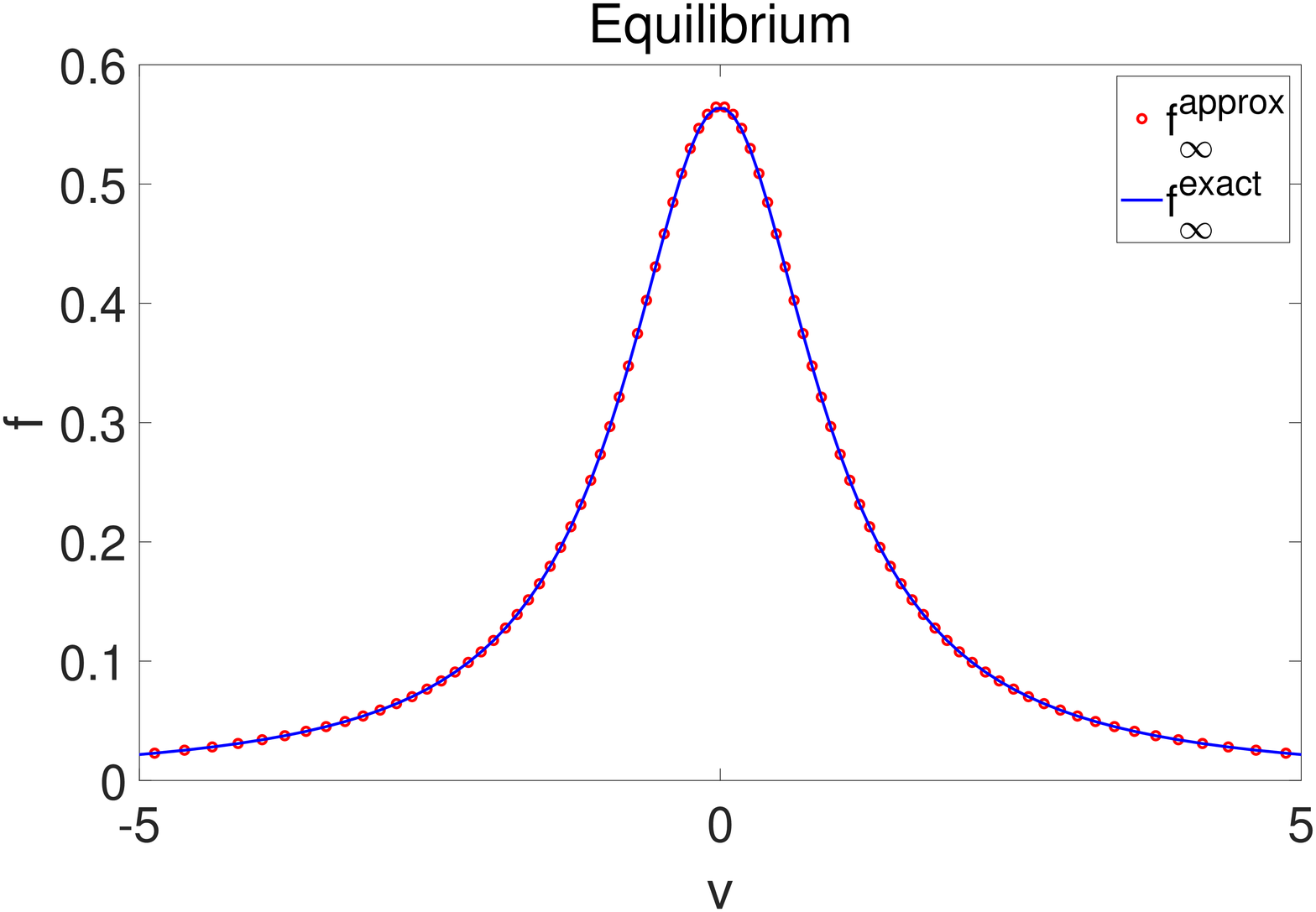}}
{\includegraphics[width=0.3\textwidth]{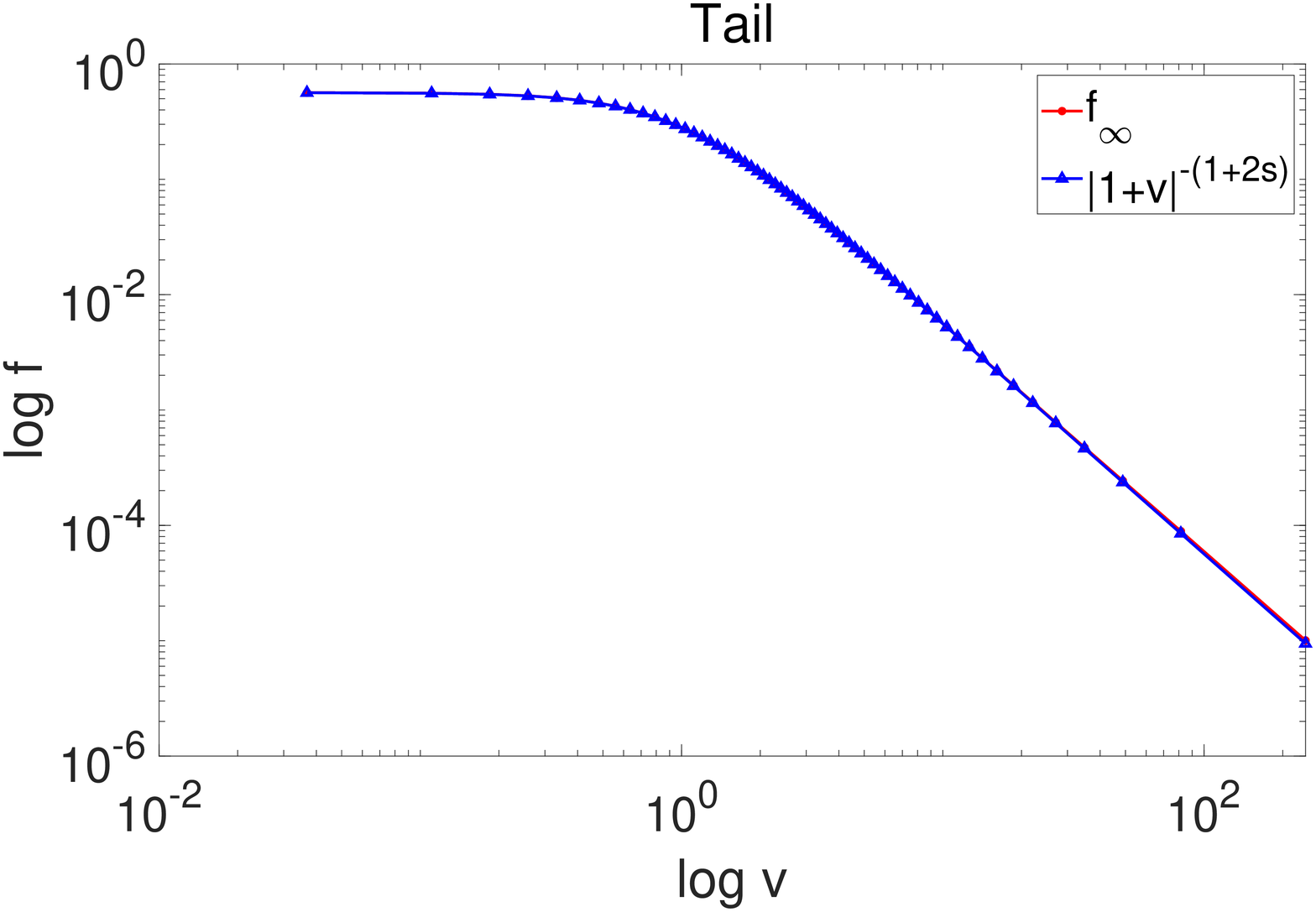}}
{\includegraphics[width=0.3\textwidth]{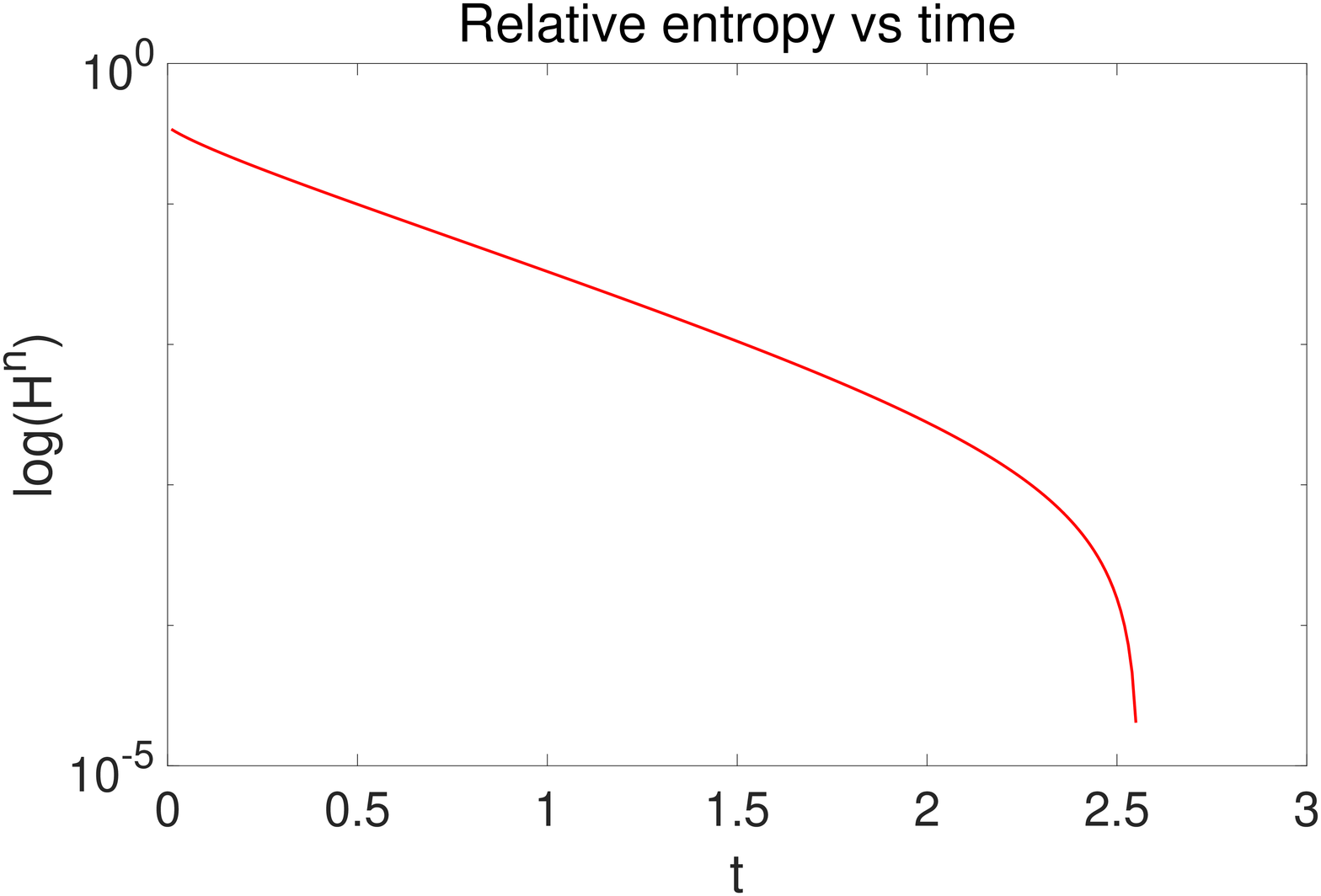}}
\caption{Computation of spatially homogeneous case \eqref{eqn: numeriacl_ex} with $s=0.5$. Left: a comparison between exact equilibrium and numerical equilibrium after converging. Middle: the tail of the equilibrium. Right: exponential convergence of the relative entropy \eqref{eqnHn}. Here $\Delta t = 0.01$ and $N_v=128$, and numerical equilibrium is reached at $t = 6.47$. }
\label{fig:tail_zp5}
\end{figure}

Next, we test two other cases $s=0.6, ~0.8$, where no explicit form of equilibrium is available, therefore we only check the tail behavior, as predicted in \eqref{eqn:equilibrium}. As shown in Fig.~\ref{fig:tail_zp5}, correct power law decay rate at tail is captured numerically and exponential convergence toward equilibrium is observed.

\begin{figure}[!ht]
\centering
{\includegraphics[width=0.4\textwidth]{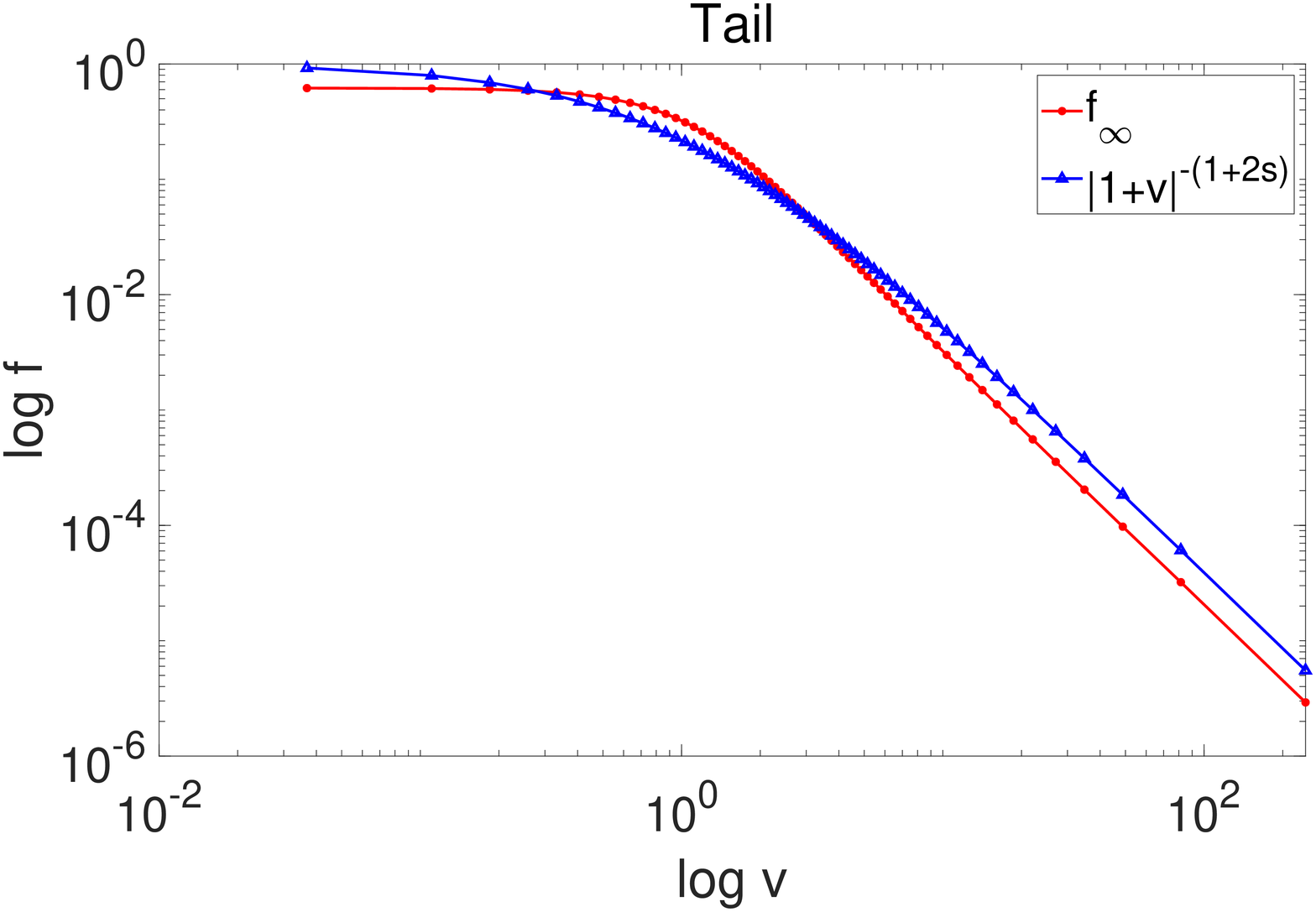}}
{\includegraphics[width=0.4\textwidth]{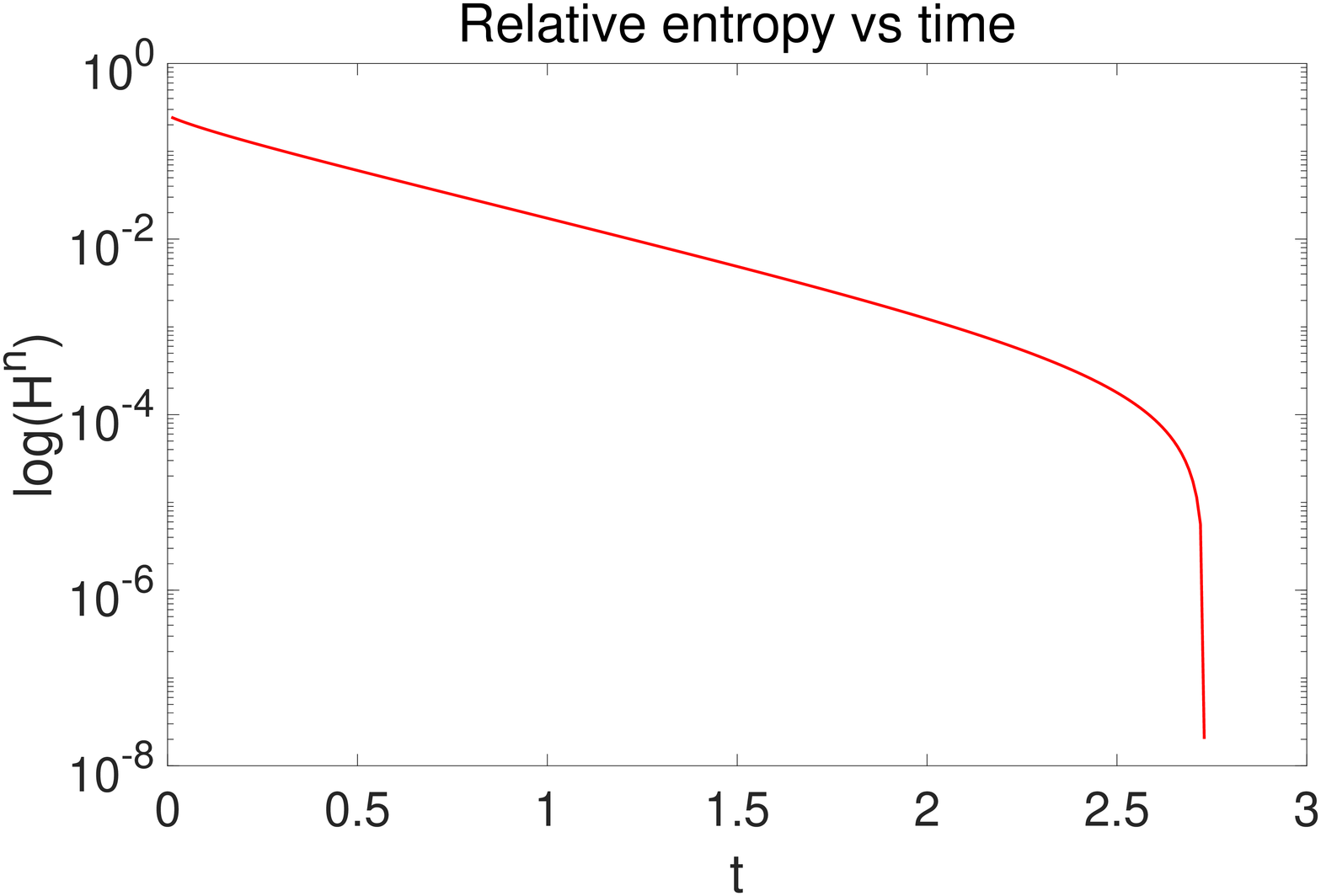}}
\vfill
{\includegraphics[width=0.4\textwidth]{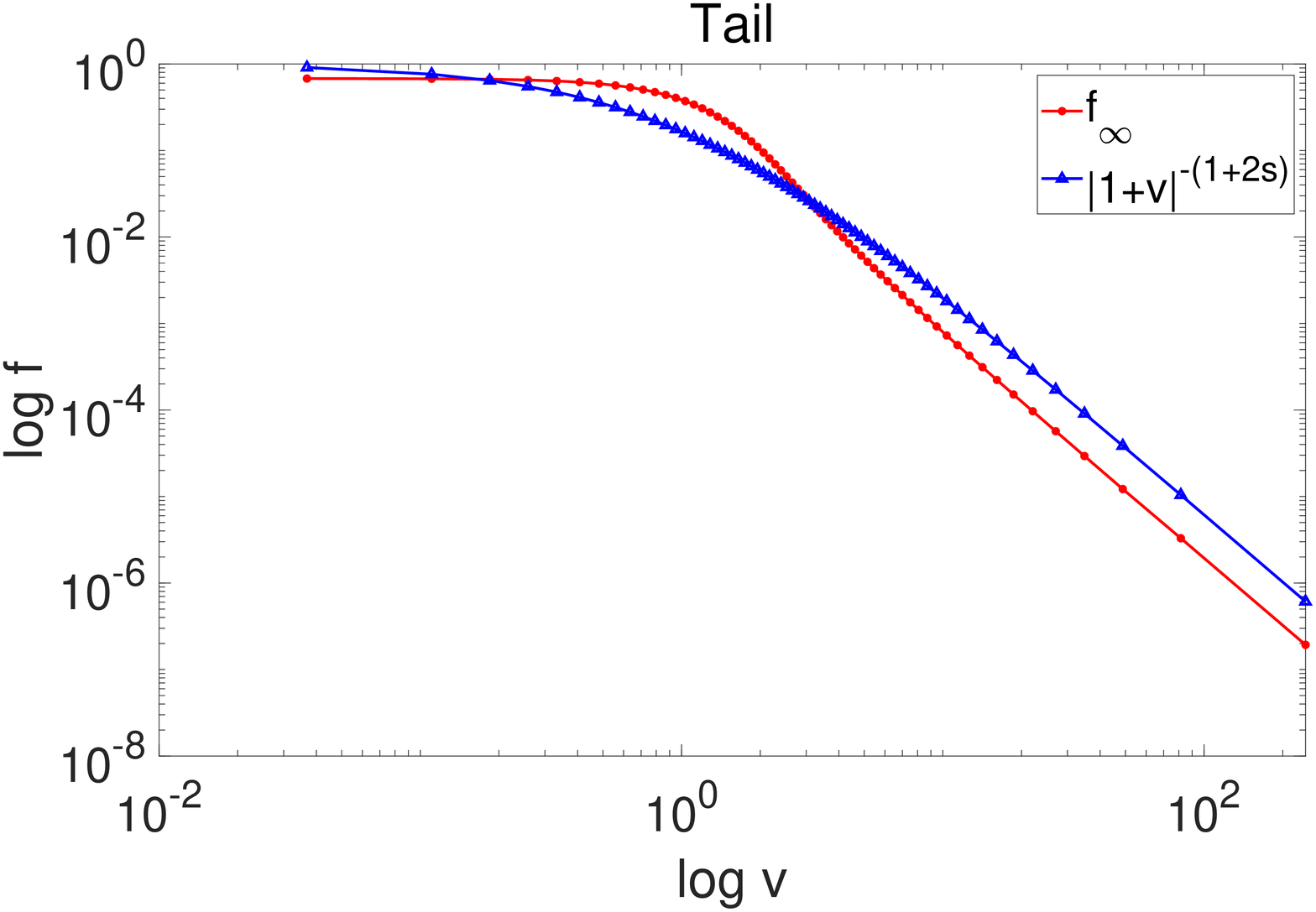}}
{\includegraphics[width=0.4\textwidth]{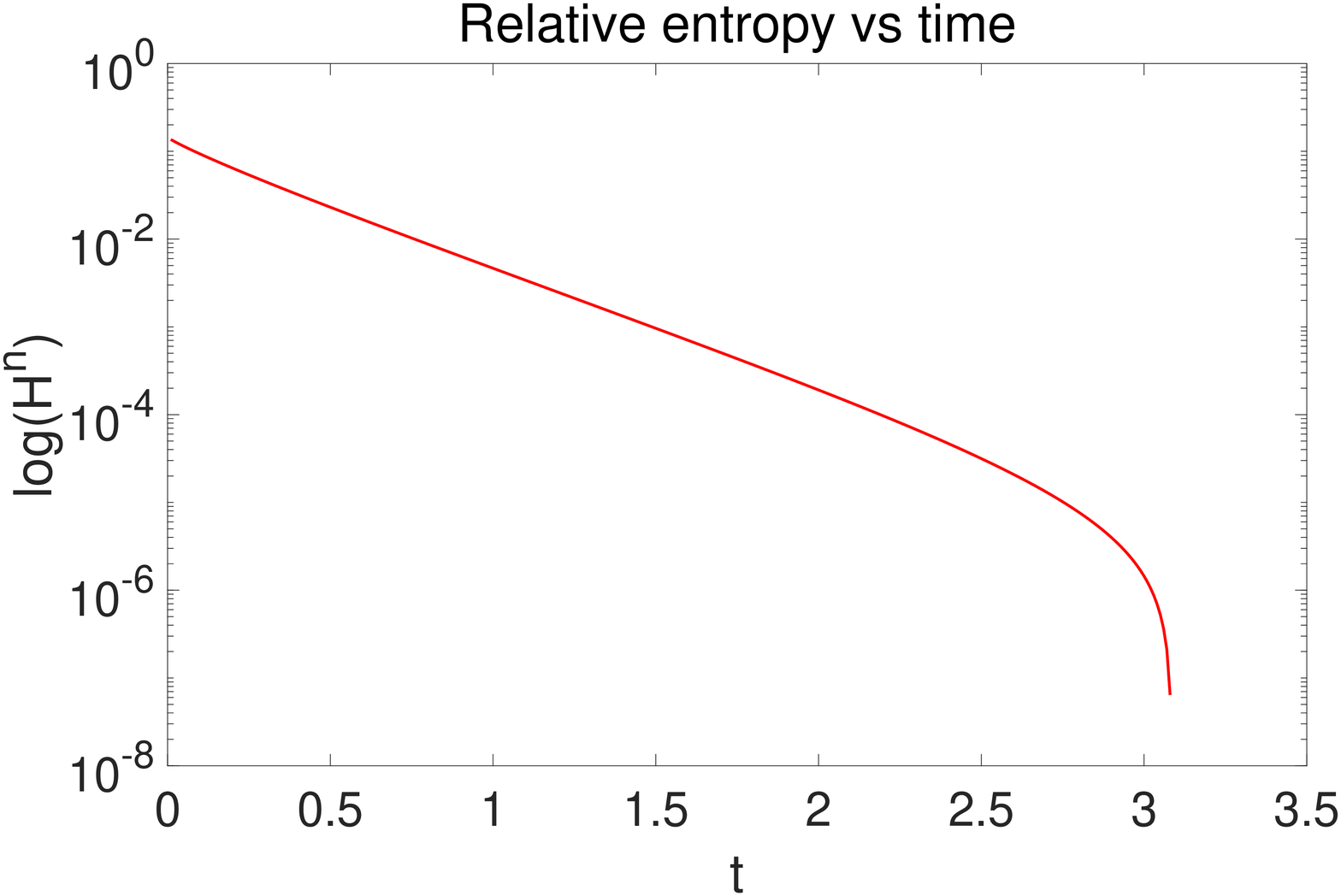}}
\caption{Computation of spatially homogeneous case \eqref{eqn: numeriacl_ex} for $s=0.6$ (top) and $s=0.8$ (bottom). Left column shows the tail behavior of the numerical equilibrium $f_{\infty}$ and right column shows exponential convergence of the relative entropy \eqref{eqnHn}. Here use $N_v=128$, $\Delta t =0.01$. The equilibrium is reached at $t=5.43$ and $t=4.1$ for $s=0.6$ and $s=0.8$, respectively.}
\label{fig:tail zp6 zp8}
\end{figure}

\subsubsection{Mass conservation}
In this section, we check the total mass versus time. As mentioned in Remark~\ref{rmk: mass}, the total mass is not exactly conserved by our scheme, but with proper choice of $L_v$ and $N_v$, the error can be controlled. Consider the same initial value problem as in \eqref{eqn: numeriacl_ex}, where the total mass is $M_0 = \pi^\half$, we define the following error in mass: 
\begin{equation} \label{masserror}
M^n = | \sum_{j=1}^{N_v} f_N^{n}(q_j) w_j \Delta q - M_0| , \qquad w_j = \frac{L_v}{(\sin (q_j))^2}\,,
\end{equation}
and plot it with $s=0.4,~0.6,~0.8$ in Fig.~\ref{fig:mass_error}. It is shown that, with the same choice of $L_v=3$, $N_v=128$, the mass is conserved better for larger $s$, which further indicates that slower decaying function is harder to compute numerically. We also check the mass error at the numerical equilibrium (defined in \eqref{Nequi}) for different $s$ with increasing $N_v$. The results are shown in Table~\ref{table:me}. As expected, with fixed $L_v$, larger $N_v$ leads to better conservation. 

\begin{figure}[H]
\centering
{\includegraphics[width=0.32\textwidth]{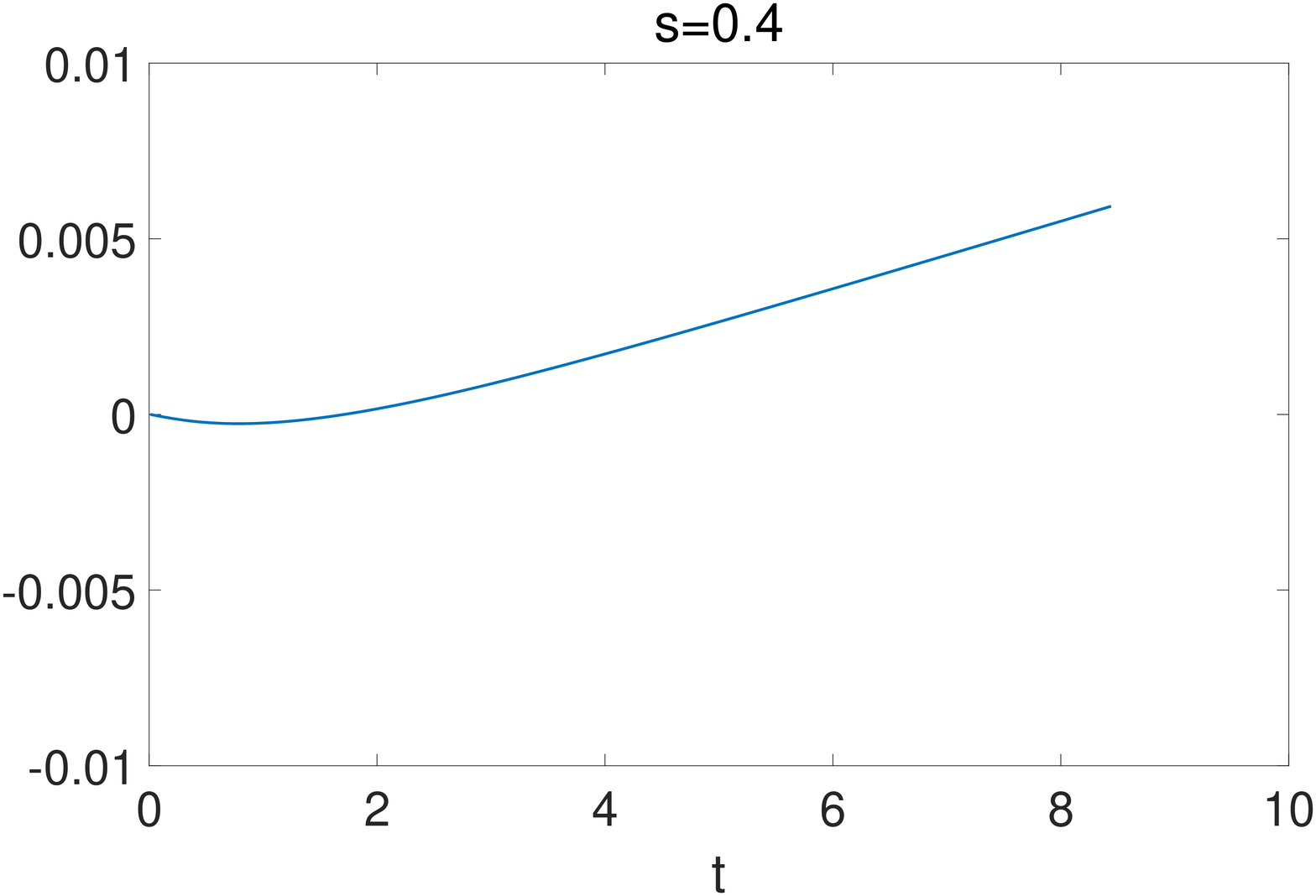}}
{\includegraphics[width=0.32\textwidth]{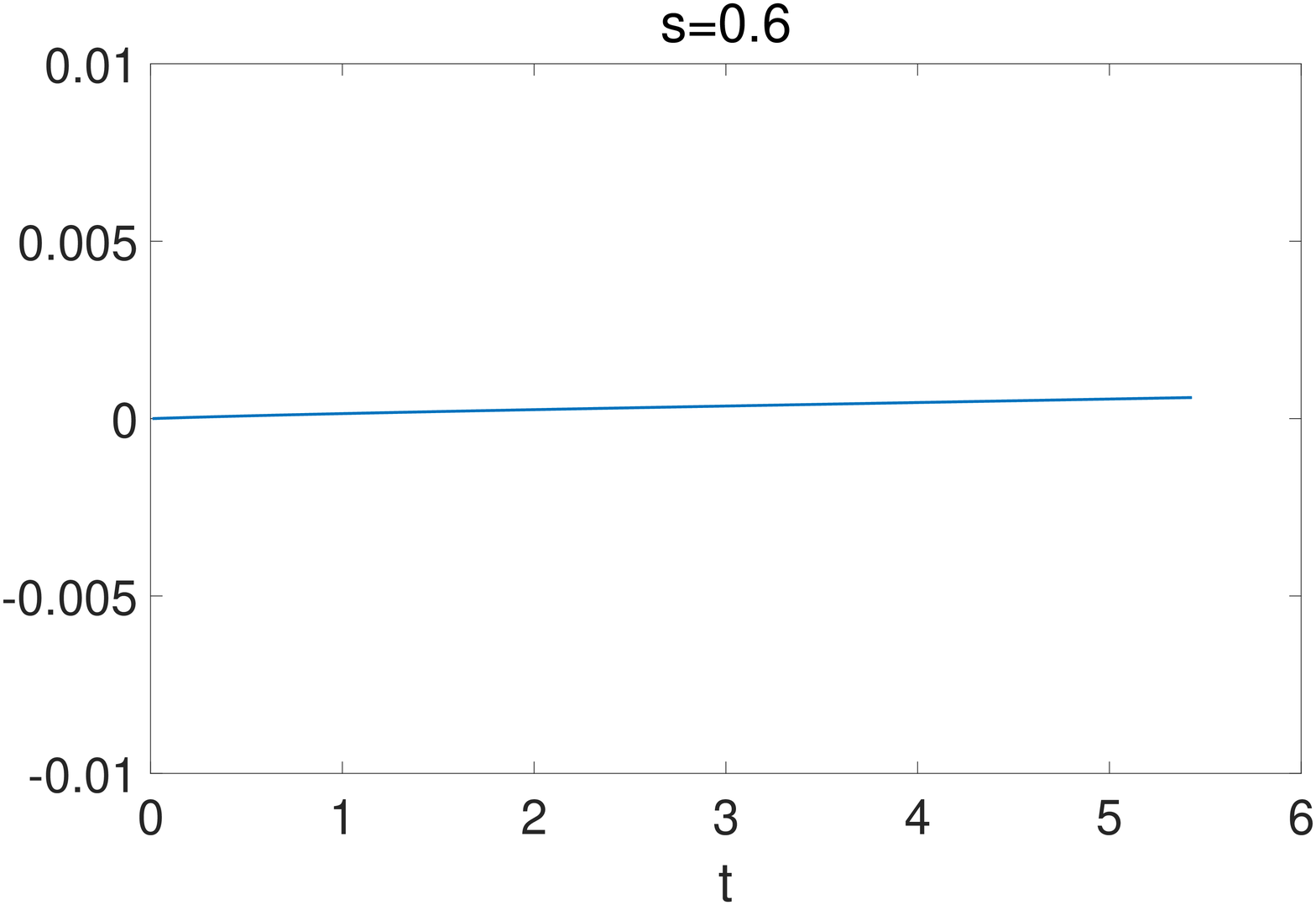}}
{\includegraphics[width=0.32\textwidth]{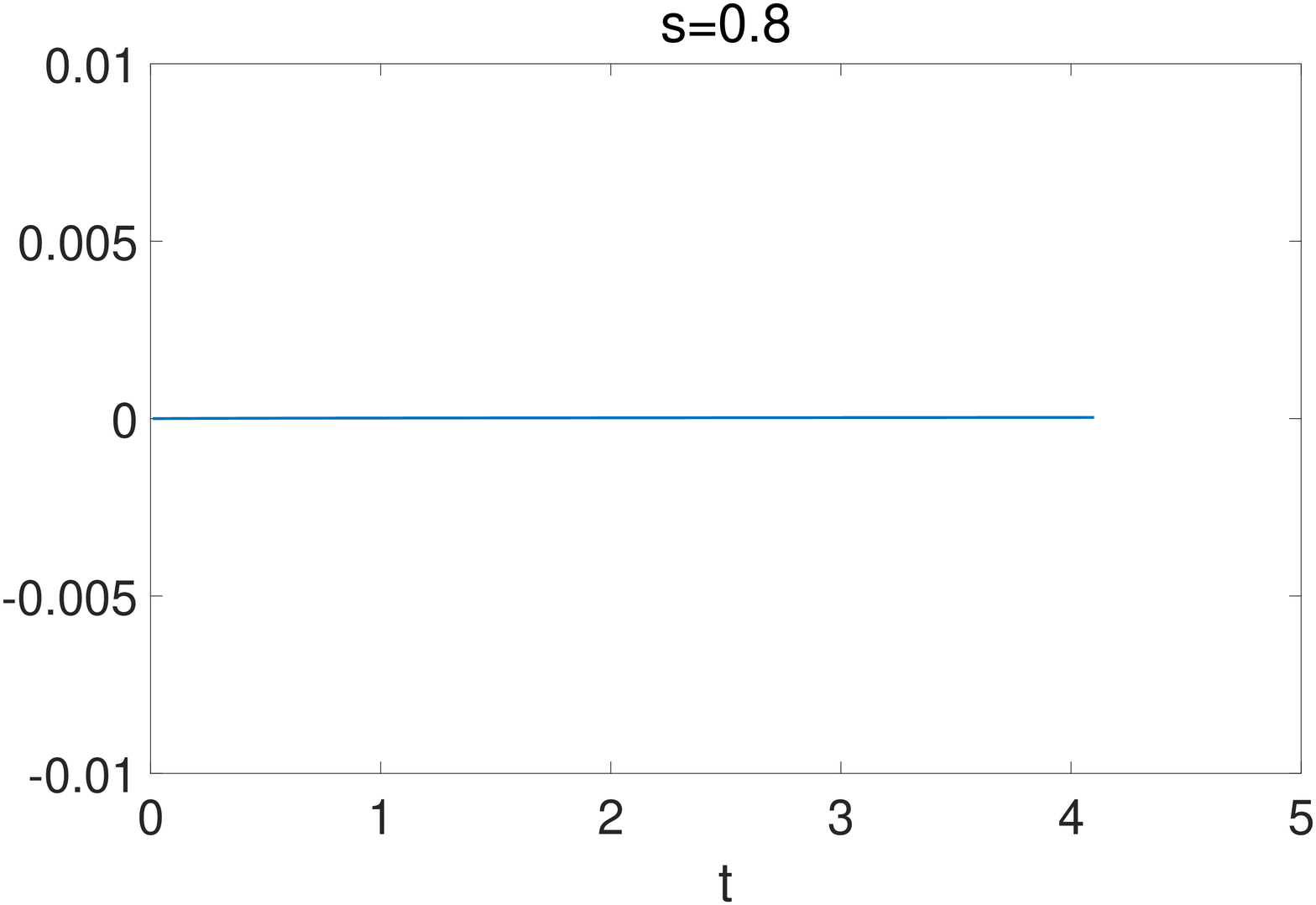}}
\caption{From top left to bottom right are mass error \eqref{masserror} over time for $s=0.4,0.6,0.8$ respectively, with $N_v = 128$, $L_v =3$. }
\label{fig:mass_error}
\end{figure}

\begin{table}[h]
\caption{Mass error at the numerical equilibrium $f_{\infty}^{N_v}$.} 
\centering 
\begin{tabular}{c c c c c} 
\hline\hline 
$N_v$   & 64 & 128 & 256 & 512\\ [0.5ex] 
\hline 
\text{$M^\infty$ with $s=0.4$} &7.8e-2  & 5.9e-3 & 2.1e-3 & 9.5e-4 \\ 
\text{$M^\infty$ with $s=0.5$} &5.7e-2  & 2.2e-3 &5.3e-4  & 1.3e-4 \\ 
\text{$M^\infty$ with $s=0.6$}  &3.3e-3  & 5.9e-4 &9.2e-5  & 5.3e-6 \\
\text{$M^\infty$ with $s=0.8$}  &2.9e-4  & 3.2e-5 & 9.4e-7 & 1.1e-6 \\[1ex] 
\hline 
\end{tabular}
\label{table:me} 
\end{table}

\subsection{Spatially inhomogeneous case}
Throughout this section, we compute $\eqref{eqn:111}$ with different choices of $\eps $. The following two initial conditions are considered: 
\begin{equation}\label{eqn: IC1}
f(0,x,v) = \pi^{-0.5}(1+\sin(\frac{\pi}{L_x}x))e^{-v^2}, \quad x\in [-\pi, \pi]\,,
\end{equation}
and
\begin{equation}\label{eqn: IC2}
f(0,x,v) = \pi^{-0.5}e^{-15x^2}e^{-v^2}\,, \quad  x \in [-5,5]\,.
\end{equation}
The equilibrium $\equilibrium$, except for $s=1/2$, is obtained numerically by running the spatially homogeneous solver until converge, as described in Section~\ref{sec:long time}.
Note that the periodic initial data \eqref{eqn: IC1} does not exactly fall into the assumption of Lemma~\ref{lemma: interchange}, but the conclusion there still holds. Indeed, for function $f(x,v)$ of the form \eqref{eqn: IC1}, one can write it into Fourier series, then it amounts to check $\int_{\mathbb R} \int_\Omega f(v,x) e^{-i \xi x}  \rd x \rd v = \int_\Omega \int_{\mathbb R}  f(v,x) e^{-i \xi x}  \rd v \rd x $, which is true as long as $f \in L^1(\mathbb{R} \times \Omega)$.

\subsubsection{Advantage of splitting in \eqref{eqn: semi_scheme}}
Before presenting specific numerical examples, we first verify the advantage of splitting and adding the term $-\gamma g^*$ in \eqref{semis1}. In particular, from \eqref{semis1},
one updates $g^*$ as $g^* = \Amat_{\gamma}^{-1} ( g^n - \Delta t \eps^{-2s}  I(\tr^n,\equilibrium))$, where $\Amat_{\gamma} = [\Imat-\Delta t \eps^{-2s} (\Lmat^s-\gamma \Imat )]$. Then the success of this step hinges on the condition number of $\Amat_{\gamma}$. In Table~\ref{table:cda}, we compute the condition number of $\Amat_\gamma$ with $\gamma = 0, \half, 1, 2$. The other parameters used are $L_x = \pi$, $N_x = 50$, $L_v = 3$, $N_v=64$ and $\Delta t = 0.1$. One sees that larger $\gamma$ leads to a better conditioned $\Amat_\gamma$, and this improvement is more pronounced for smaller $\eps$. Therefore, in the fractional diffusive regime, $\gamma$ plays a non-negligible role. 

\begin{table}[h]
\caption{Condition number of $\Amat_{\gamma}$.} 
\centering 
\begin{tabular}{c c c c c} 
\hline\hline 
   & $Cond(\Amat_{0})$ &  $Cond(\Amat_{\half})$ &  $Cond(\Amat_{1})$ &  $Cond(\Amat_{2})$ \\ [0.5ex] 
\hline 
 $s = 0.4$, $\eps = 1$ &4.73  & 4.54 & 4.36 & 4.06  \\ 
$s = 0.6$, $\eps = 1$ &4.93  &4.73  & 4.55 & 4.24 \\ 
$s = 0.8$, $\eps = 1$  &13.28  & 12.66 & 12.11 & 11.14 \\
$s = 0.4$, $\eps = 1e-3$ &7.74e3  & 158 & 52 & 20.90 \\ 
$s = 0.6$, $\eps = 1e-3$ &1.24e5 &187.42  & 63.44 & 25.45 \\ 
$s = 0.8$, $\eps = 1e-3$  &5.97e6  & 564.32 & 190.75 &75.81 \\
$s = 0.4$, $\eps = 1e-5$ &3.57e5  & 181.66 & 55.84 & 21.43  \\ 
$s = 0.6$, $\eps = 1e-5$ &3.15e7 &188.98  & 63.67 & 25.49  \\ 
$s = 0.8$, $\eps = 1e-5$  &9.46e9  & 564.62 & 190.79 & 75.82 \\[1ex] 
\hline 
\end{tabular}
\label{table:cda} 
\end{table}

\subsubsection{Uniform accuracy in time}
In this section, we show the first order accuracy in time by computing the following error
\[
e_{\Delta t} = \sum_{i}^{N_x} \sum_{j}^{N_v} |f^{\Delta t}_{i,j}(T) - f^{\Delta t/2}_{i,j}(T)|w_j \Delta q \Delta x .
\]
In Fig.~\ref{fot_acc}, first order accuracy in time is shown among different choices of $eps$: $\eps=1$, $\eps=1e-3$ and $\eps=1e-5$.
\begin{figure}[!ht]
\centering
{\includegraphics[width=0.45\textwidth]{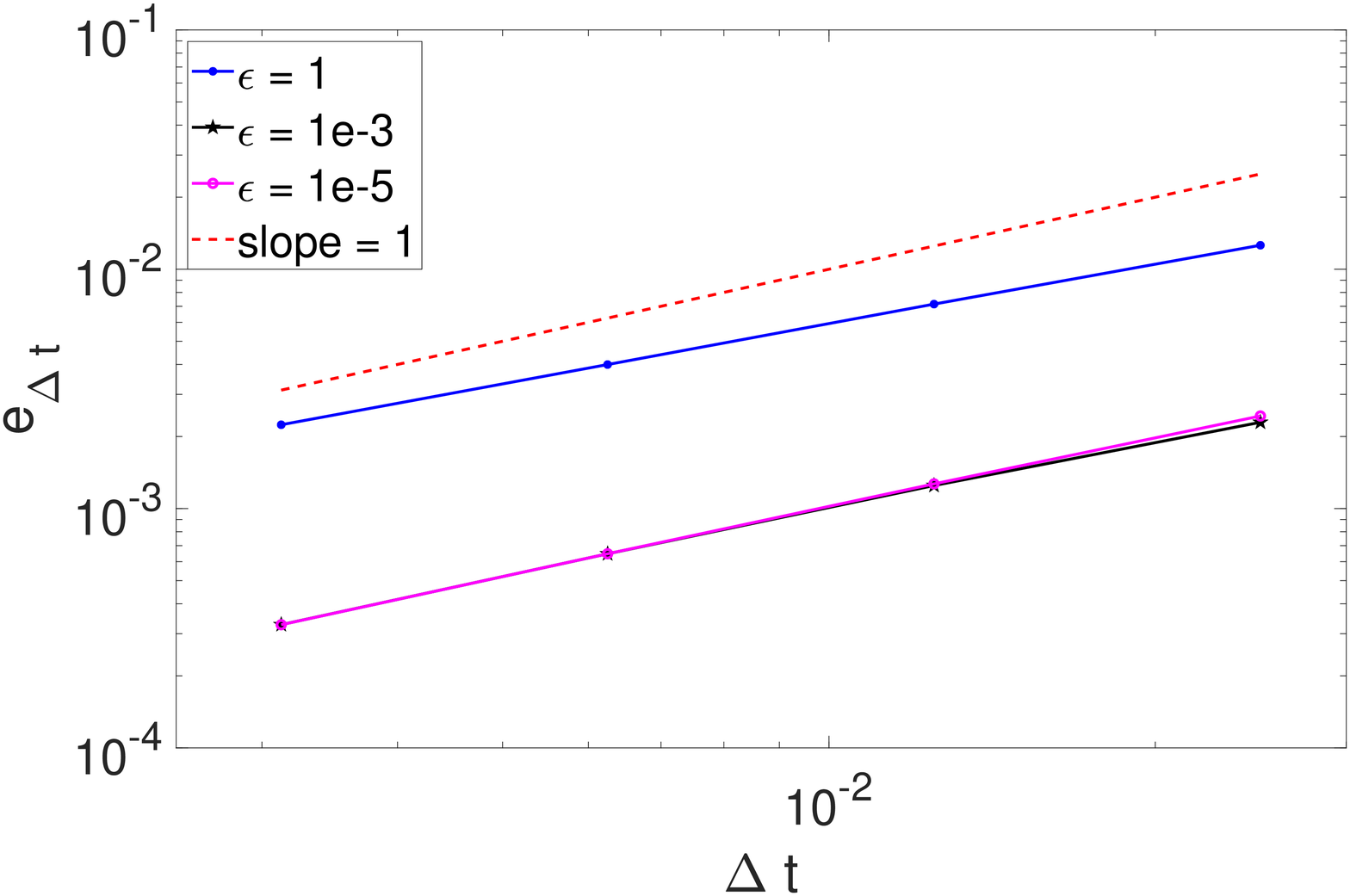}}
{\includegraphics[width=0.45\textwidth]{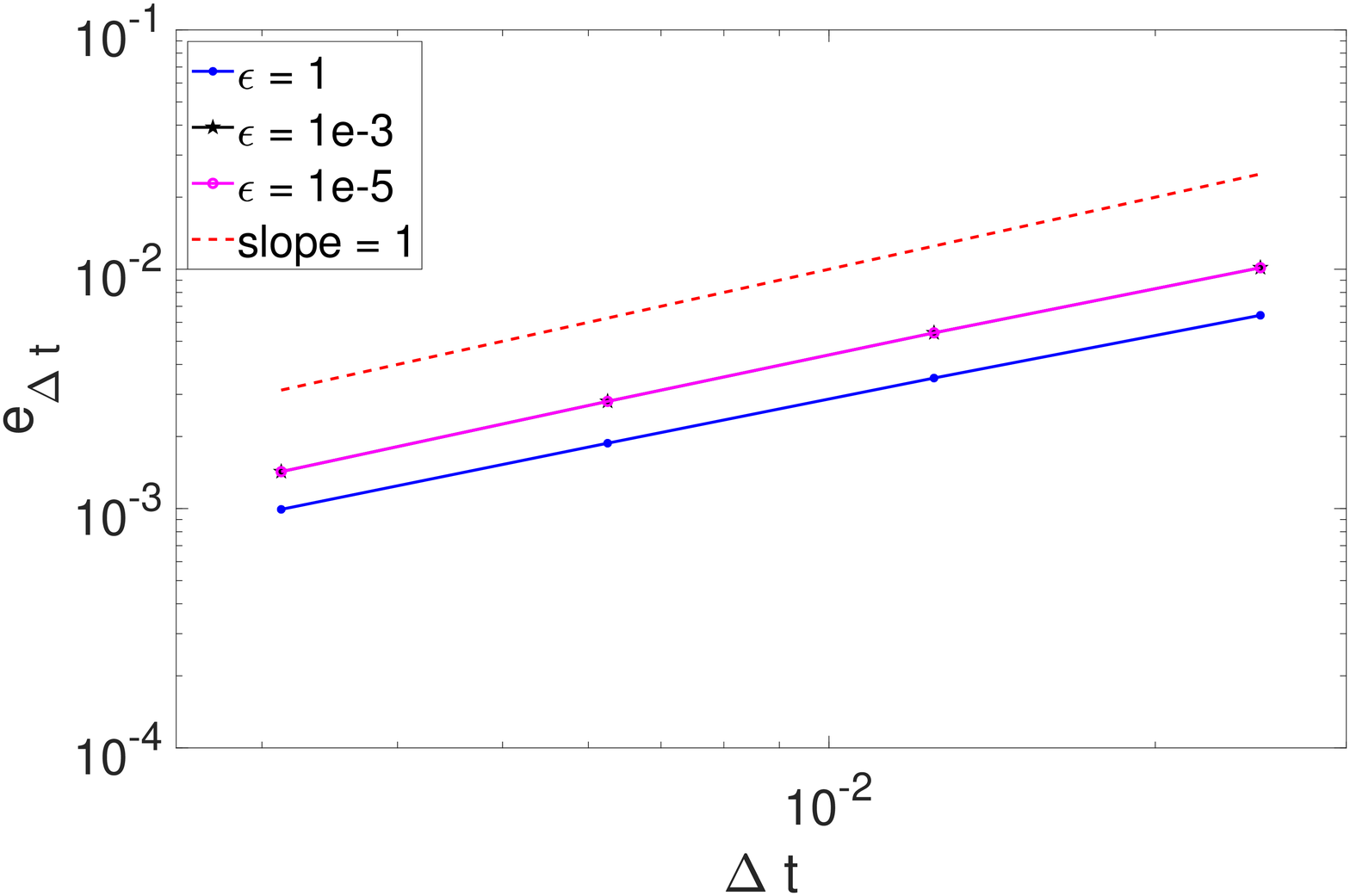}}
\caption{ First order accuracy in time for \eqref{eqn:111} with initial condition $\eqref{eqn: IC2}$. The left is for $s=0.4$, with $L_x = 5$, $N_x = 200$, $L_v = 3$ and $N_v=128$, right is for $s=0.8$, with $L_x = 5$, $N_x = 200$, $L_v = 3$ and $N_v=128$. For both cases, run up to $T=0.1$ with time step sizes $\Delta t = 0.025, 0.0125, 0.00625, 0.003125, 0.0015625$. }
\label{fot_acc}
\end{figure}

\subsubsection{Energy stability}
We compute the total energy, and the individual energy corresponding to the macro and micro part in this section. More precisely, the numerical approximation to \eqref{Ef} and \eqref{Eetag} are
\[
E_f = \sum_{i=1}^{N_x} \sum_{j=1}^{N_v} \frac{ f_{i,j}^2}{\equilibrium_j}  w_j \Delta q \Delta x\,, \quad 
E_g = \sum_{i=1}^{N_x} \sum_{j=1}^{N_v} \frac{ g_{i,j}^2}{\equilibrium_j}  w_j \Delta q \Delta x\,, \quad 
E_\eta = \sum_{i=1}^{N_x} \sum_{j=1}^{N_v} \eta_{i,j}^2\equilibrium_j  w_j \Delta q \Delta x\,.
\]
In Fig.~ \ref{fig: energy}, we compute \eqref{eqn:111} with initial condition $\eqref{eqn: IC2}$ and various $\eps$, ranging from $1$ to $1e-5$. The solution is computed up to $T=0.1$. As shown, $E_f$ decays with time, and $E_g$ and $E_\eta$ are uniformly bounded in time, which coincides with the Propositions ~\ref{prop:energy1} and \ref{prop:energy2} and confirms the stability of our scheme. When $\eps=1e-5$, the sudden change at the first step is due to fact that \eqref{eqn: IC2} is not at the equilibrium.

\begin{figure}[H]
\centering
{\includegraphics[width=0.32\textwidth]{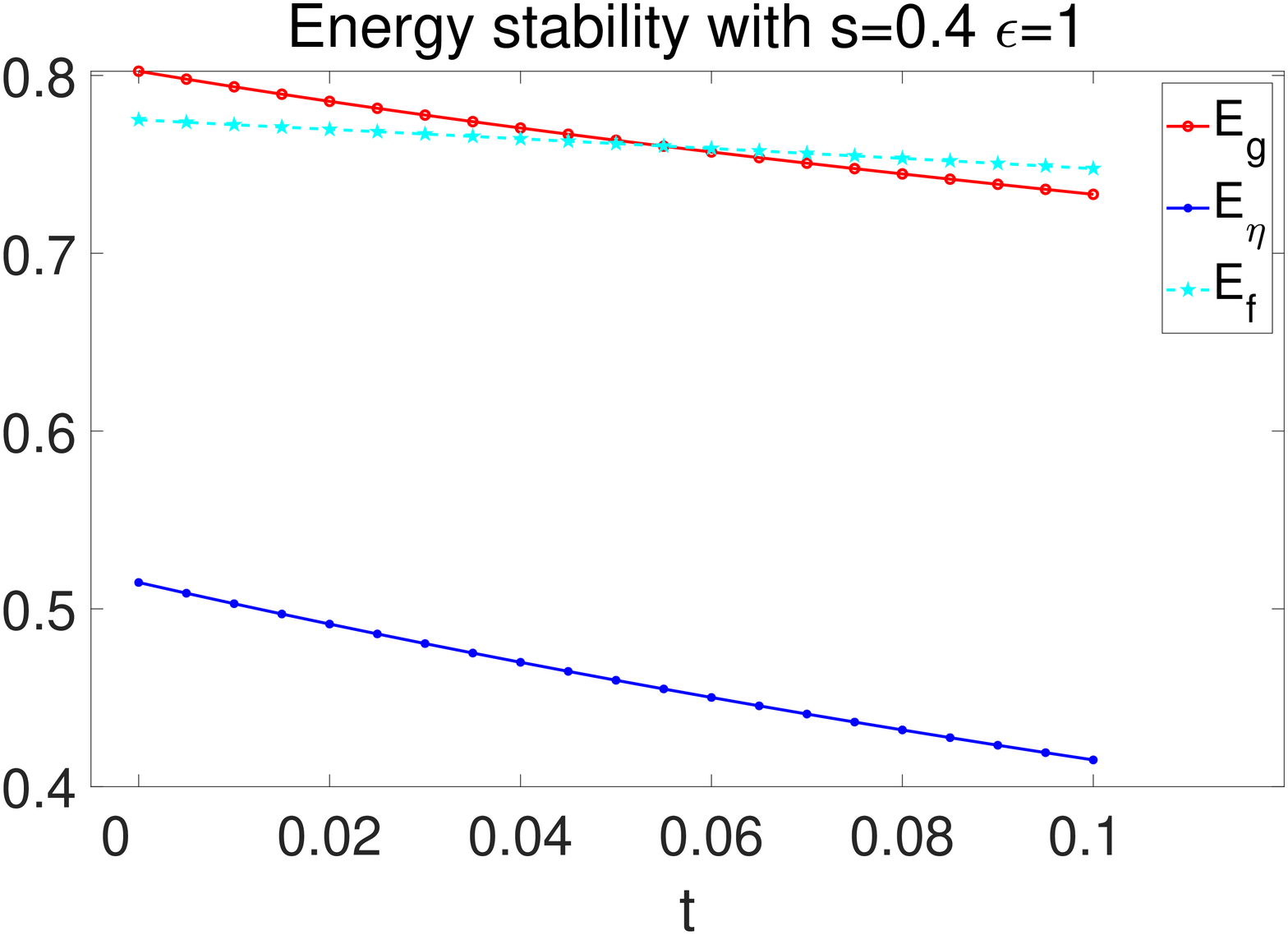}}
{\includegraphics[width=0.32\textwidth]{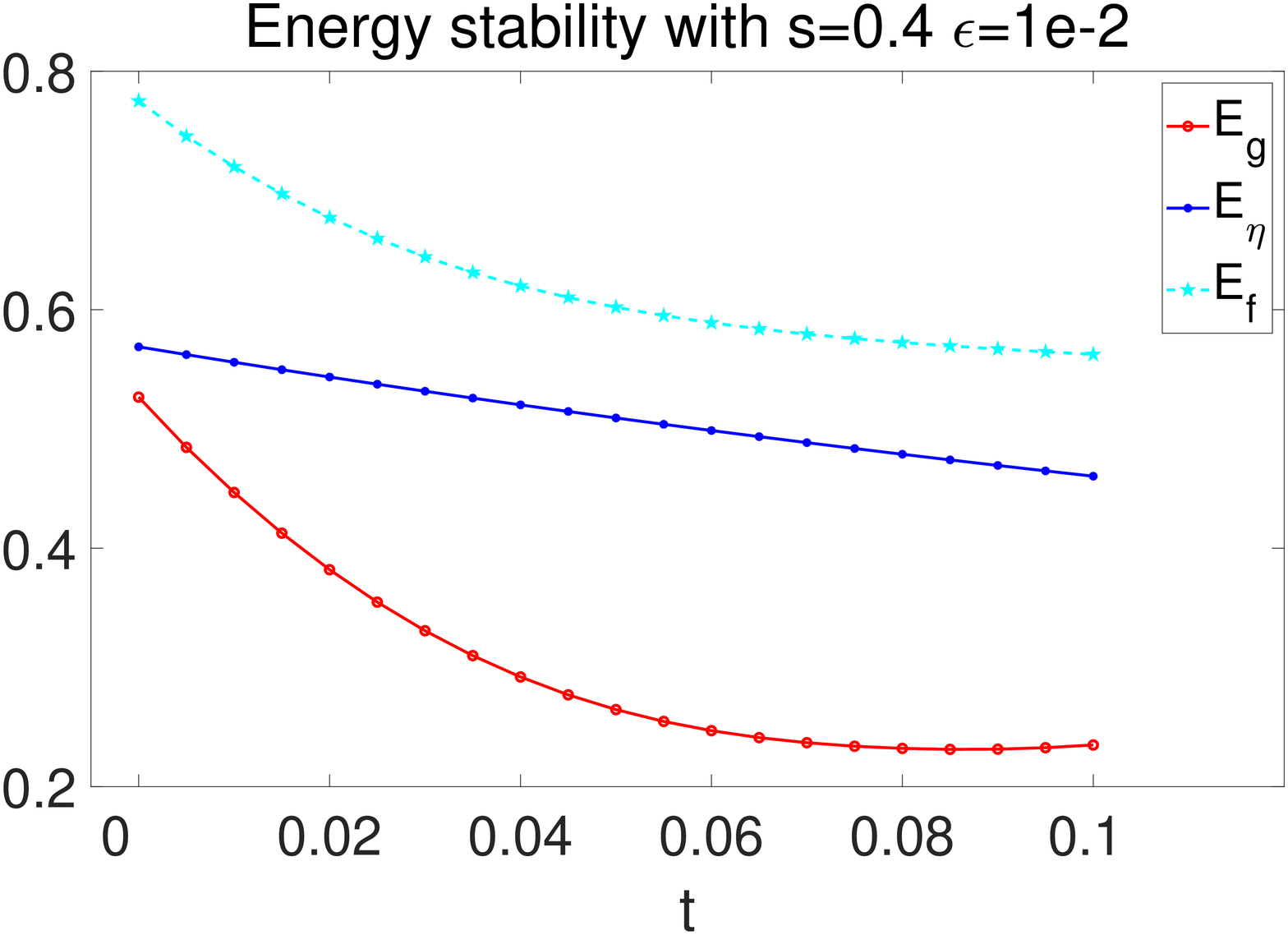}}
{\includegraphics[width=0.32\textwidth]{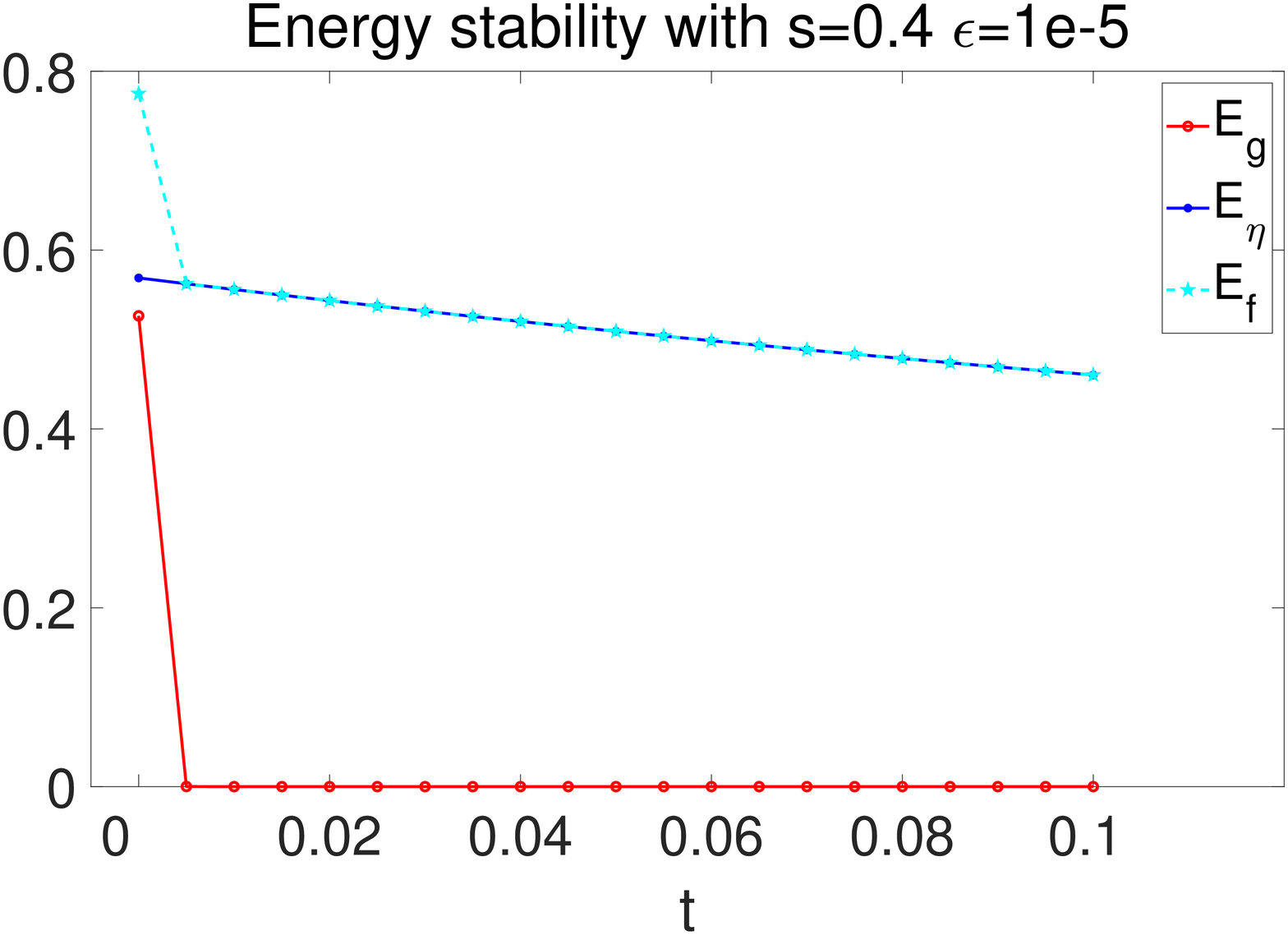}}
\caption{Computation of $E_f$, $E_g$ and $E_\eta$ for \eqref{eqn:111} with initial condition $\eqref{eqn: IC2}$. The numerical parameters for all three different $\eps$ cases are:  $L_x=5$, $L_v=3$, $N_x=100$, $N_v=128$, $\Delta t=0.01$, and the solution is computed up to $T=0.1$.}
\label{fig: energy}
\end{figure}

\subsubsection{Kinetic regime $\eps = 1$}
We then check the performance of our scheme in the kinetic regime with $\eps=1$. As a comparison, the following implicit-explicit (IMEX) scheme is used on finer mesh to produce the reference solution:
\begin{align*} 
\begin{cases}{}
\frac{f^{*} - f^n}{\Delta t} +  v \partial_x f^{n} = 0,
\\ \frac{f^{n+1} - f^*}{\Delta t} = \partial_v (v f^{n+1}) - (-\Delta_v)^s f^{n+1}.
\end{cases}
\end{align*}
Here Fig.~\ref{two_scheme_comp_IC_2} and Fig.~\ref{two_scheme_comp_IC_1} correspond to initial conditions  \eqref{eqn: IC1} and \eqref{eqn: IC2}, respectively. Different choices of $s$ are considered. One sees that the numerical solution from our AP scheme agrees well with the reference solution.

\begin{figure}[!ht]
\centering
{\includegraphics[width=0.32\textwidth]{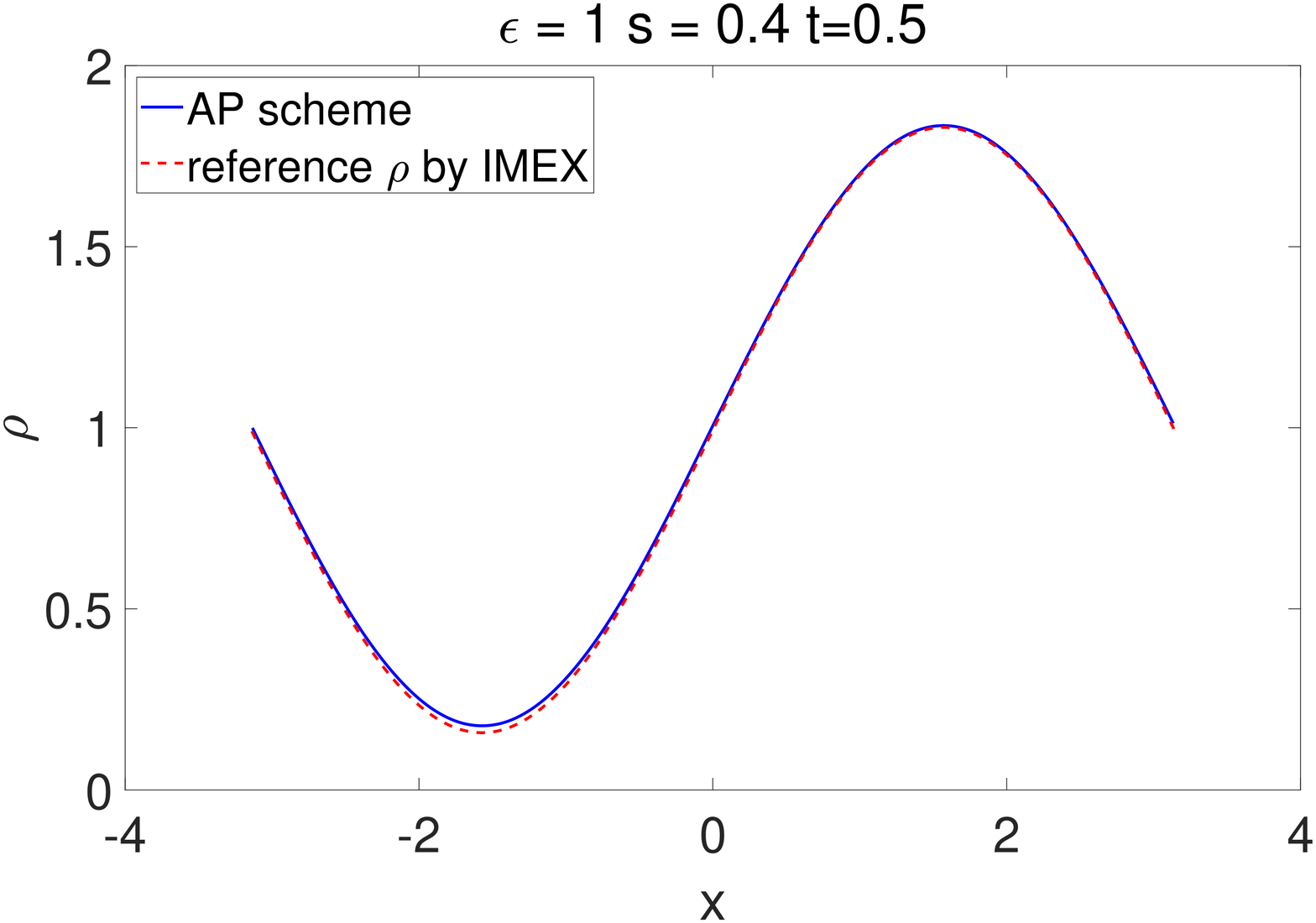}}
{\includegraphics[width=0.32\textwidth]{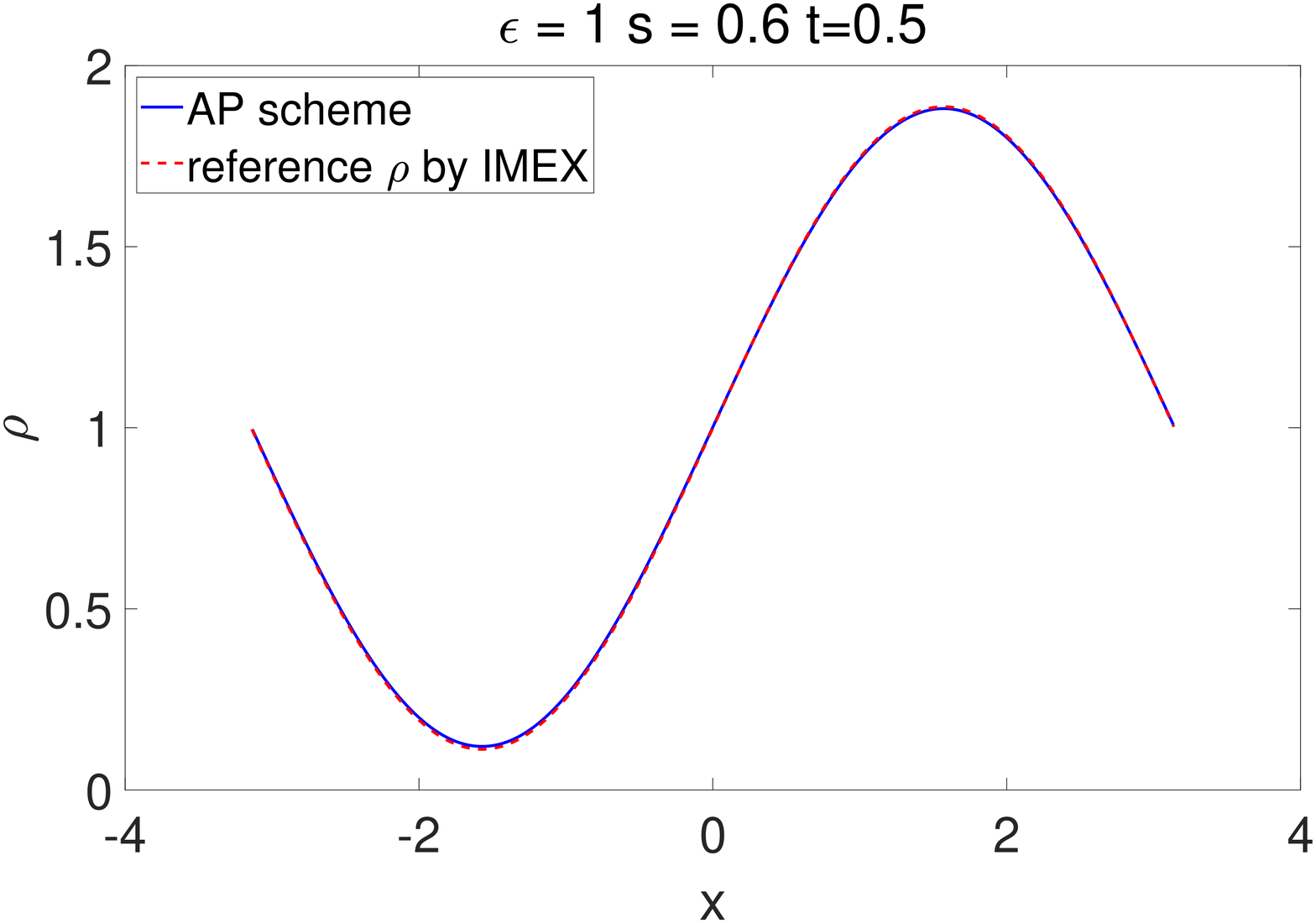}}
{\includegraphics[width=0.32\textwidth]{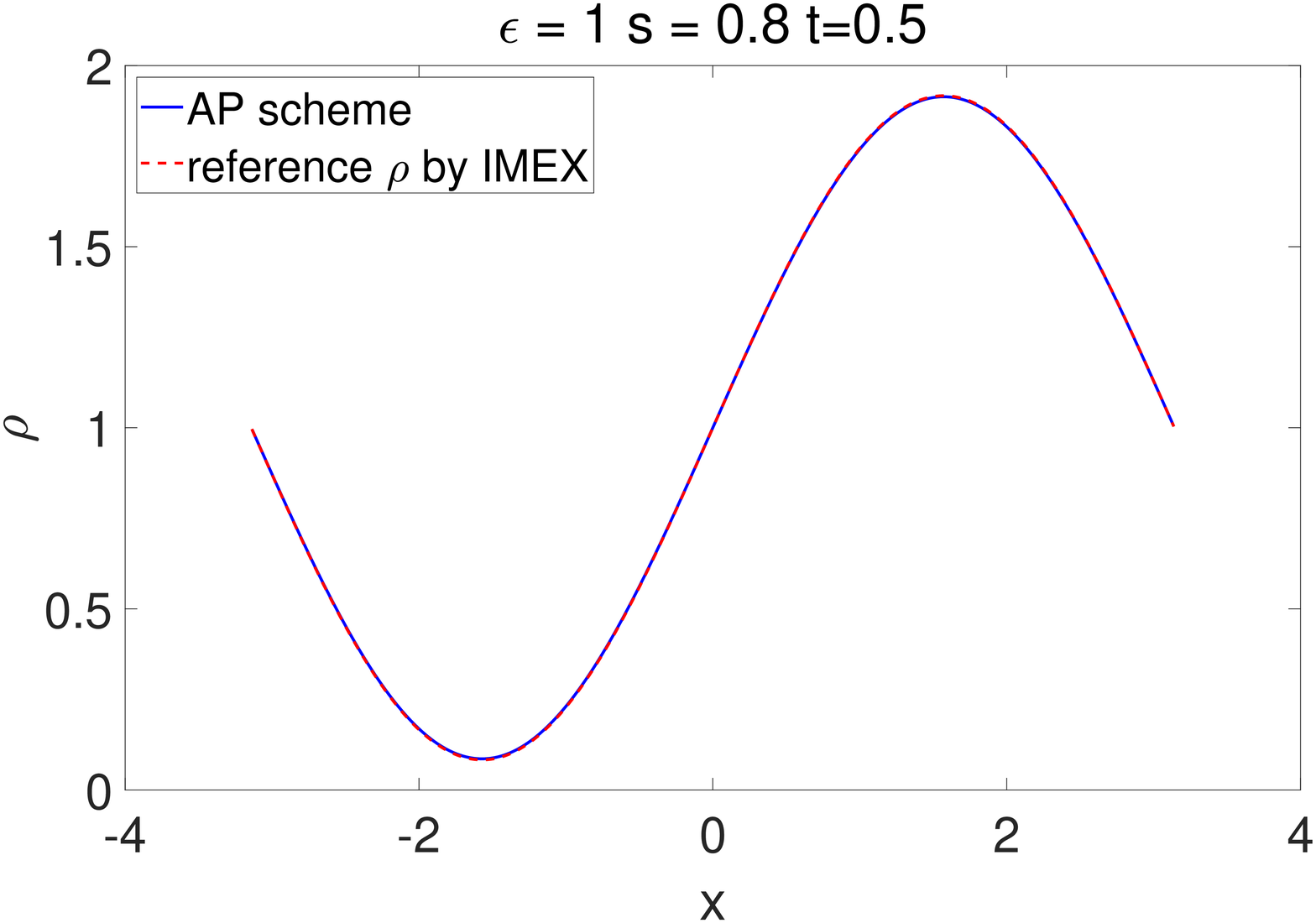}}
\caption{ Plot of solution at $t=0.5$ for \eqref{eqn:111} with initial condition \eqref{eqn: IC1}. $L_x = \pi$. For our AP scheme, $N_x = 50$, $N_v=64$, $\Delta t = 0.05$. For the reference solution, $N_x = 100$, $N_v=128$, $\Delta t = 1e-4$.}
\label{two_scheme_comp_IC_2}
\end{figure}

\begin{figure}[!ht]
\centering
{\includegraphics[width=0.32\textwidth]{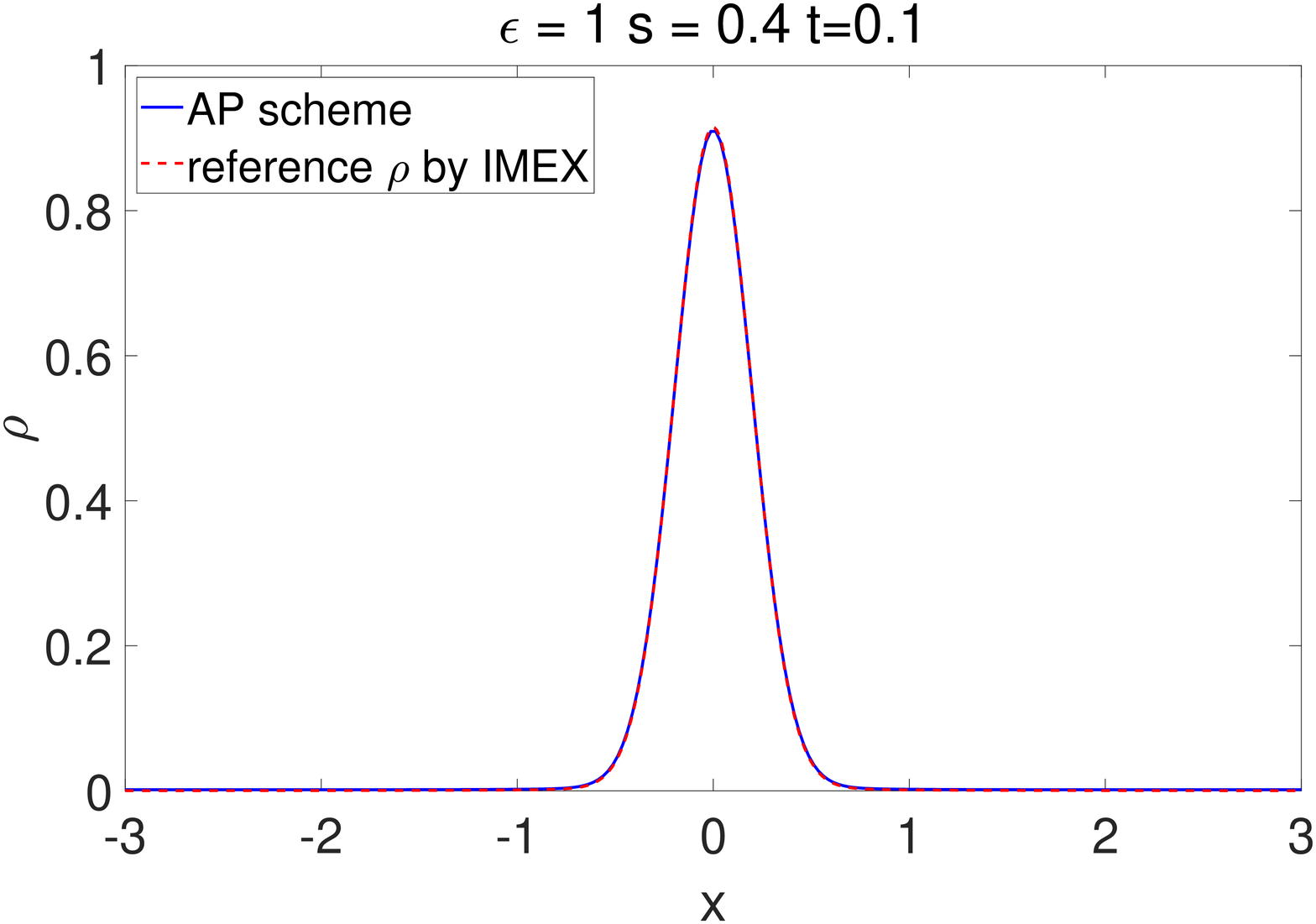}}
{\includegraphics[width=0.32\textwidth]{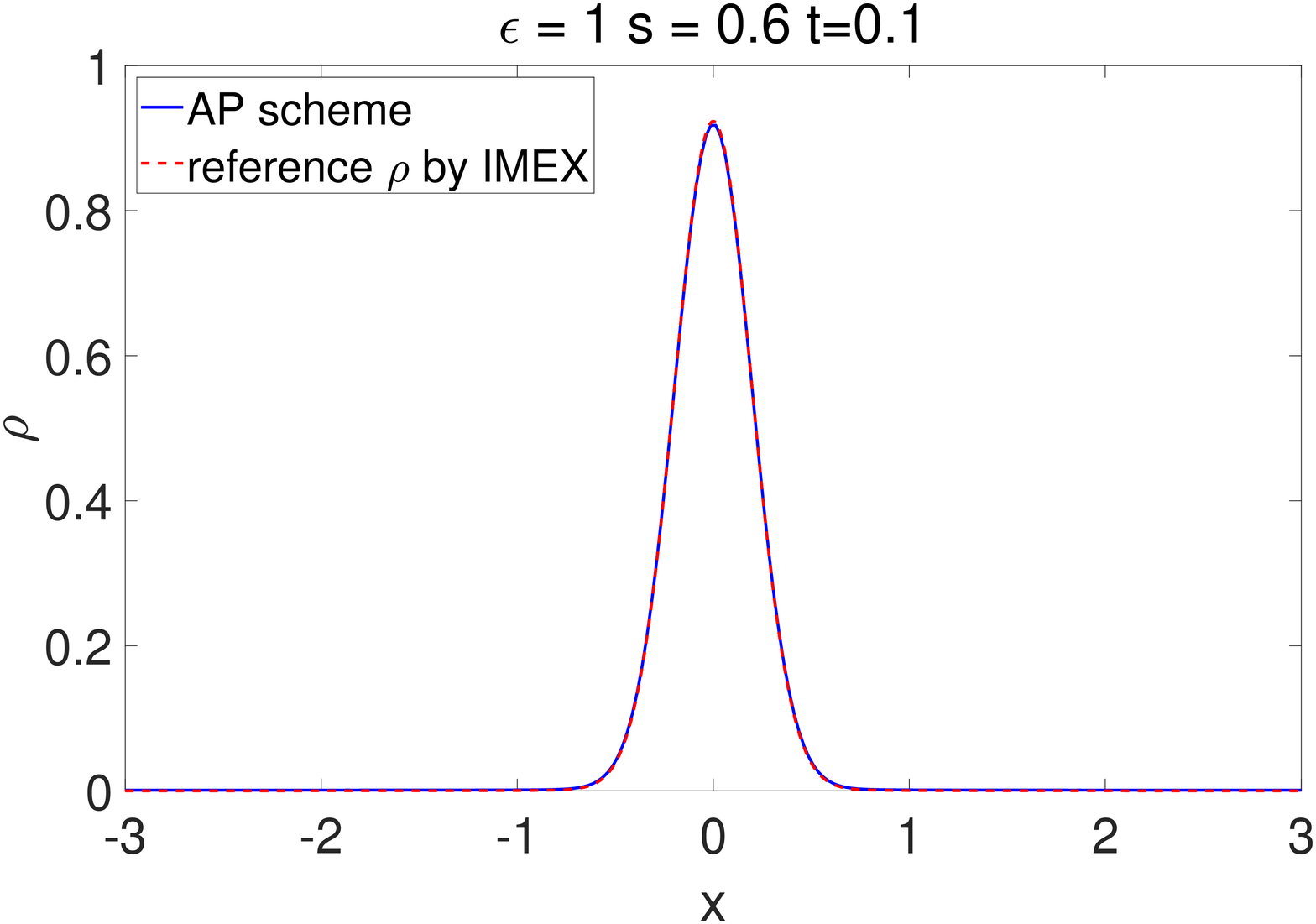}}
{\includegraphics[width=0.32\textwidth]{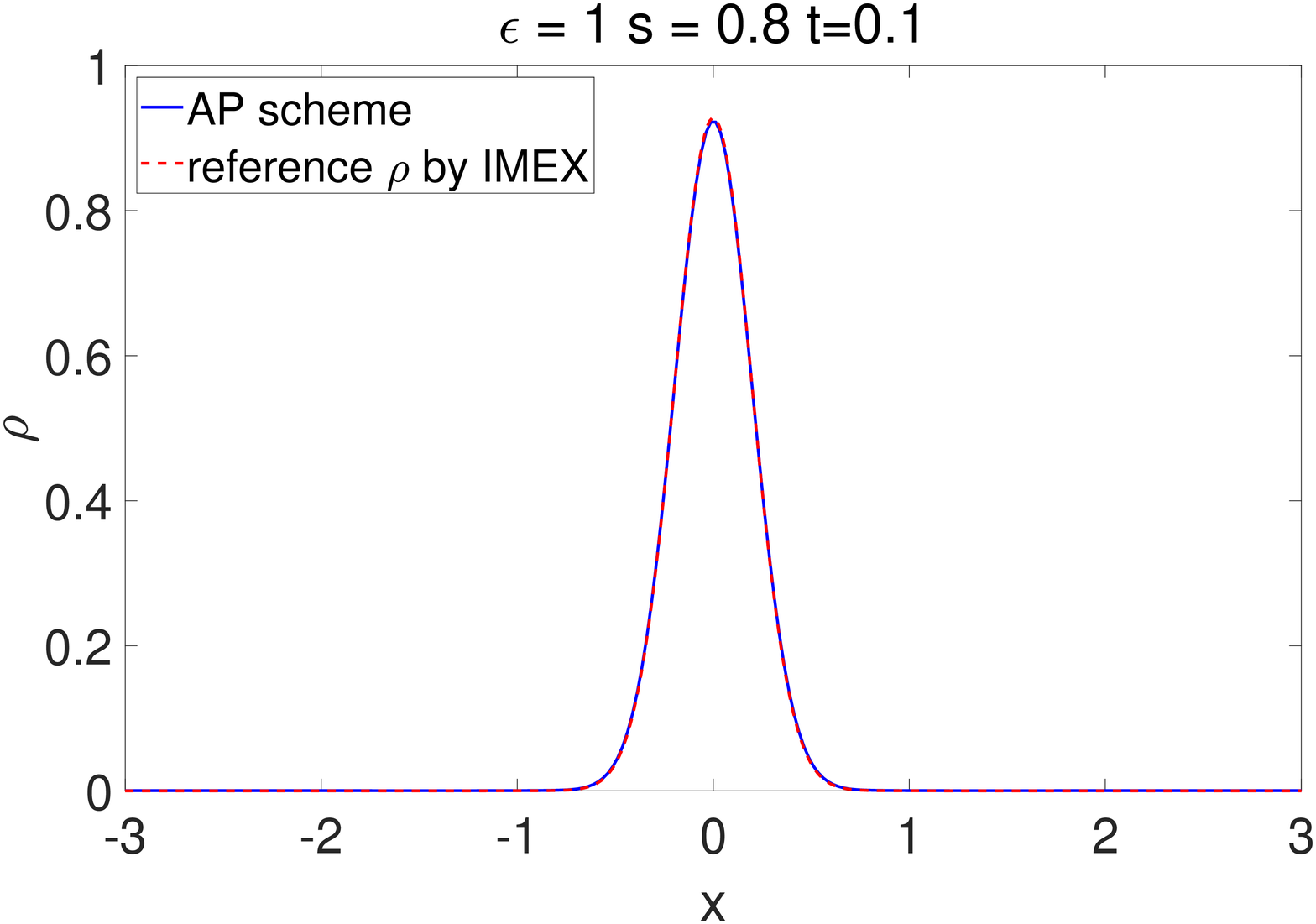}}
\caption{Plot of solution at $t=0.5$ for \eqref{eqn:111} with initial condition \eqref{eqn: IC2}. $L_x=5$. For our AP scheme, $N_x = 200$, $N_v=128$, $\Delta t = 0.01$. For reference solution, $N_x = 800$, $N_v=256$, $\Delta t = 1e-4$.}
\label{two_scheme_comp_IC_1}
\end{figure}

\subsubsection{AP property and diffusive regime}
In this section, we check the AP property of the scheme and test its performance in the diffusive regime. To check the AP property, we compute the following asymptotic error 
\begin{equation} \label{APerror}
Error = \|\rho^{\eps}\equilibrium - {f}^{\eps}\|_1  = \sum_{i=1}^{N_x} \sum_{j=1}^{N_v} | \rho^{\eps}_i \equilibrium_j - {f^{\eps}_{i,j}}| w_j  \Delta q \Delta x\,,
\end{equation}
where $\equilibrium$ is the equilibrium and $\rho^{\eps} = \int_{\RR} f^{\eps} \rd v$.

When $\eps = 1e-5$, we compare the solution to our AP scheme with the solution to the diffusive equation \eqref{eqn: limit_system}, which is computed using the Fourier spectral method. 

The results are collected in Fig.~\ref{fig: ap_check_IC_1} and Fig.~\ref{fig: ap_check_IC_2}, for initial data \eqref{eqn: IC1} and \eqref{eqn: IC2}, respectively. The left column of each figure represents the asymptotic error \eqref{APerror} in time with different choice of $\eps$. It is clearly that the error decreases with vanishing $\eps$, and it is at a magnitude of approximately $\mathcal O(\eps)$. On the right, a good match between the solution to the AP scheme with the solution to the fractional limit is observed. Two different $s$ are tested for each cases.  
\begin{figure}[!ht]
\centering
{\includegraphics[width=0.45\textwidth]{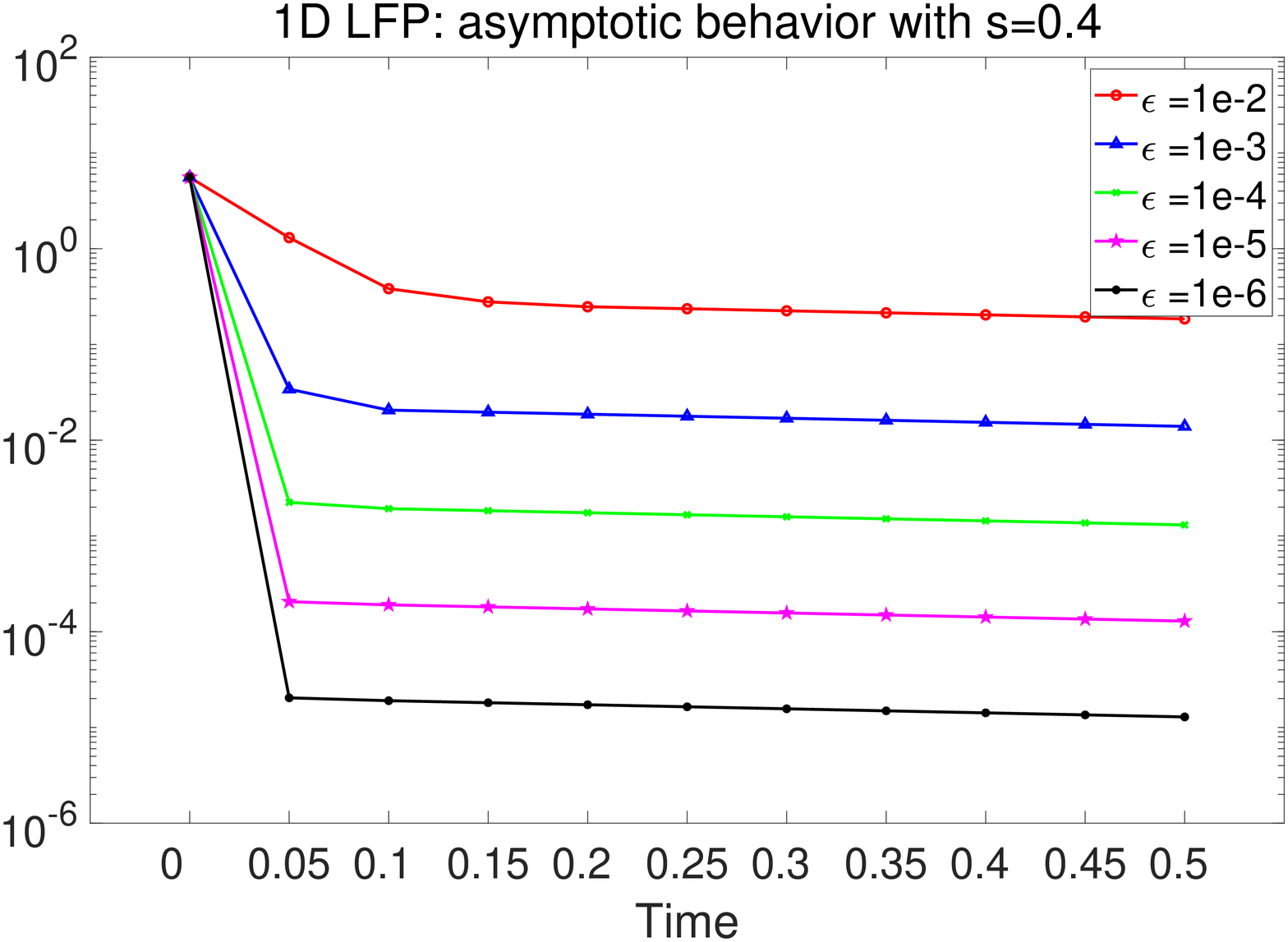}}
{\includegraphics[width=0.45\textwidth]{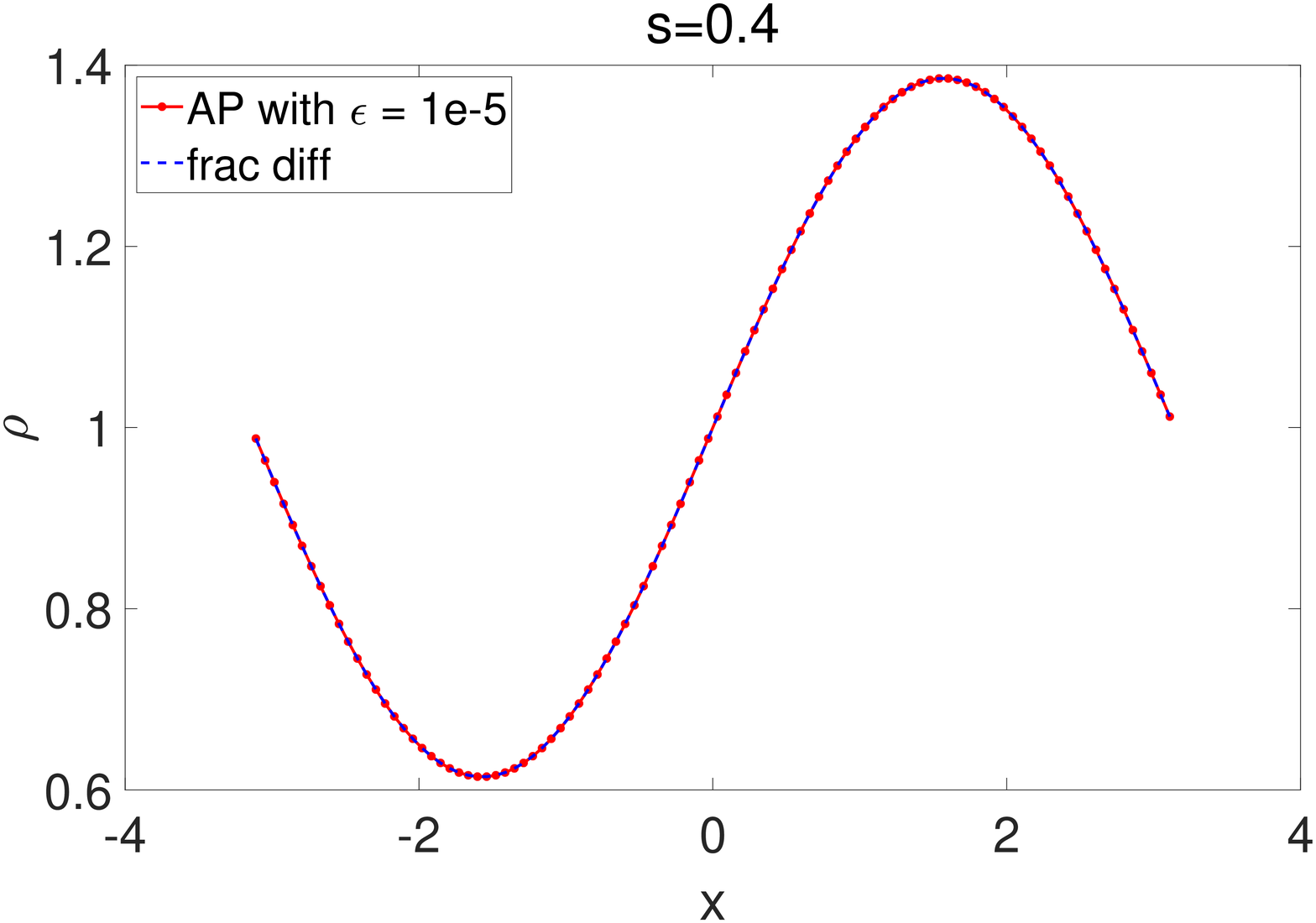}}
{\includegraphics[width=0.45\textwidth]{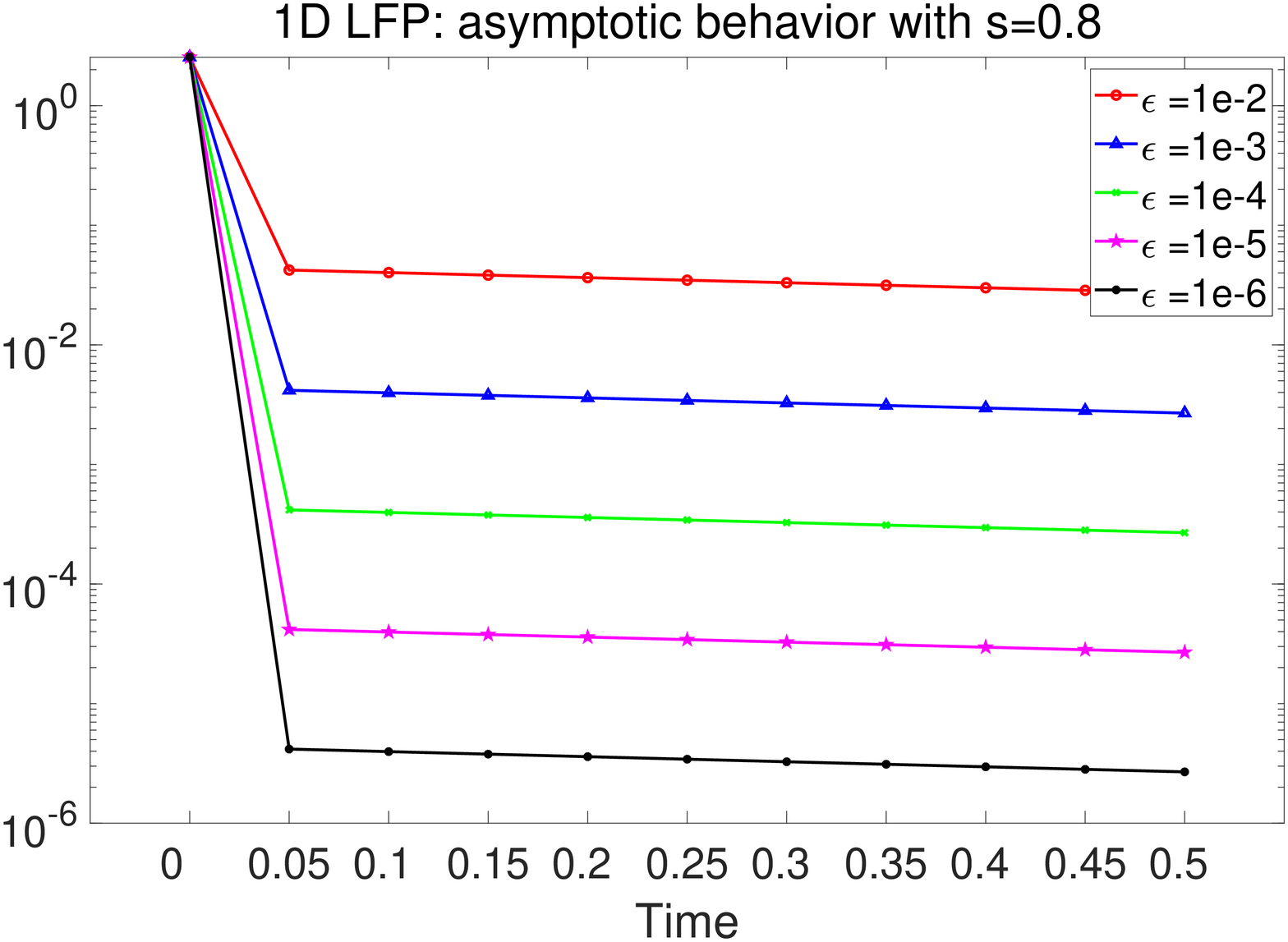}}
{\includegraphics[width=0.45\textwidth]{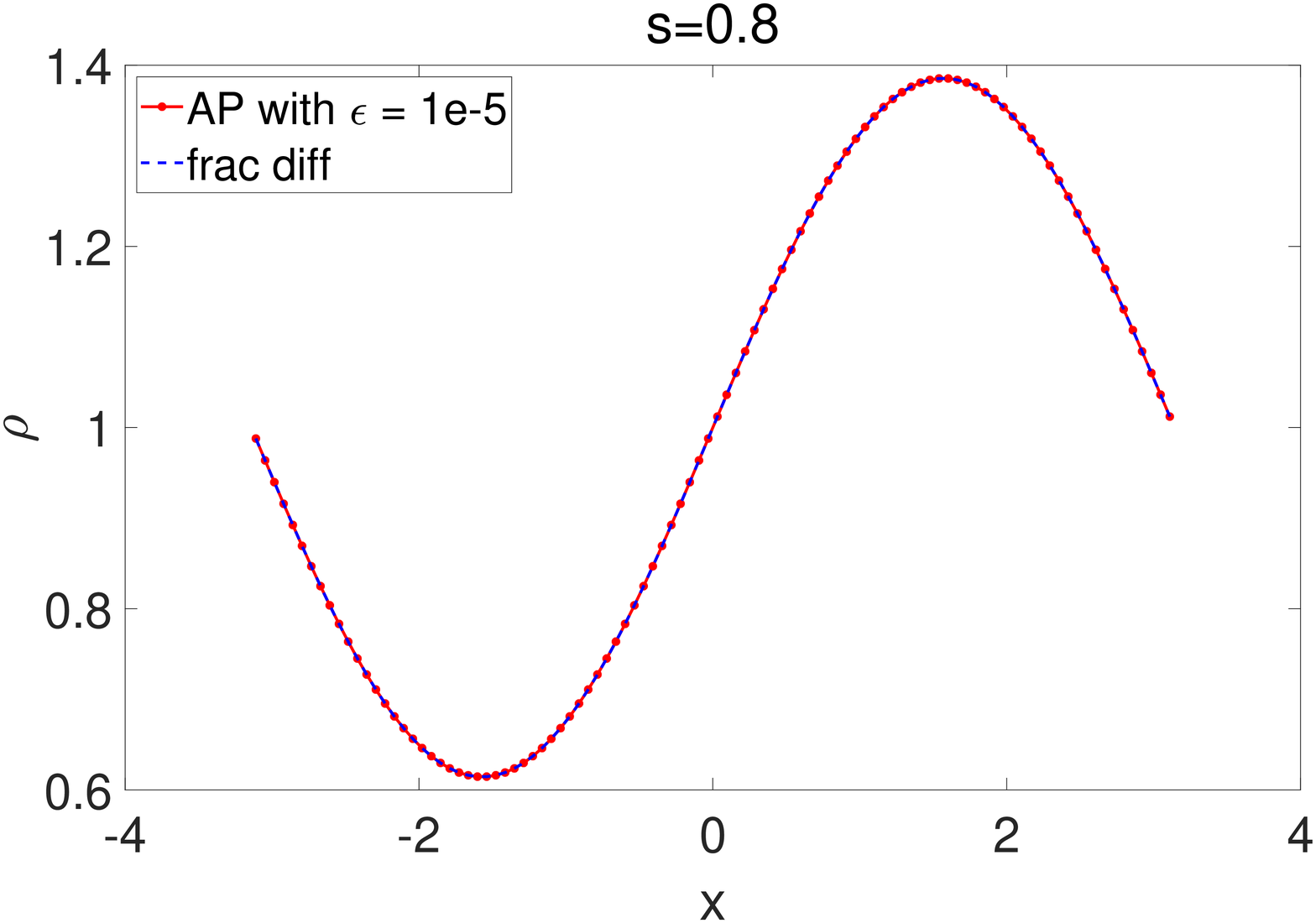}}
\caption{Computation of \eqref{eqn:111} with initial condition \eqref{eqn: IC1}. Left: asymptotic error \eqref{APerror} versus time. Right: plot of solution at $T=1$. $L_x = \pi$, $N_x=100$, $\Delta t = 0.1$ are used in both AP scheme and the Fourier spectral method in computing $\eqref{eqn: limit_system}$. $N_v=128$, $L_v=3$ are used for velocity variable.}
\label{fig: ap_check_IC_1}
\end{figure}

\begin{figure}[!ht]
\centering
{\includegraphics[width=0.45\textwidth]{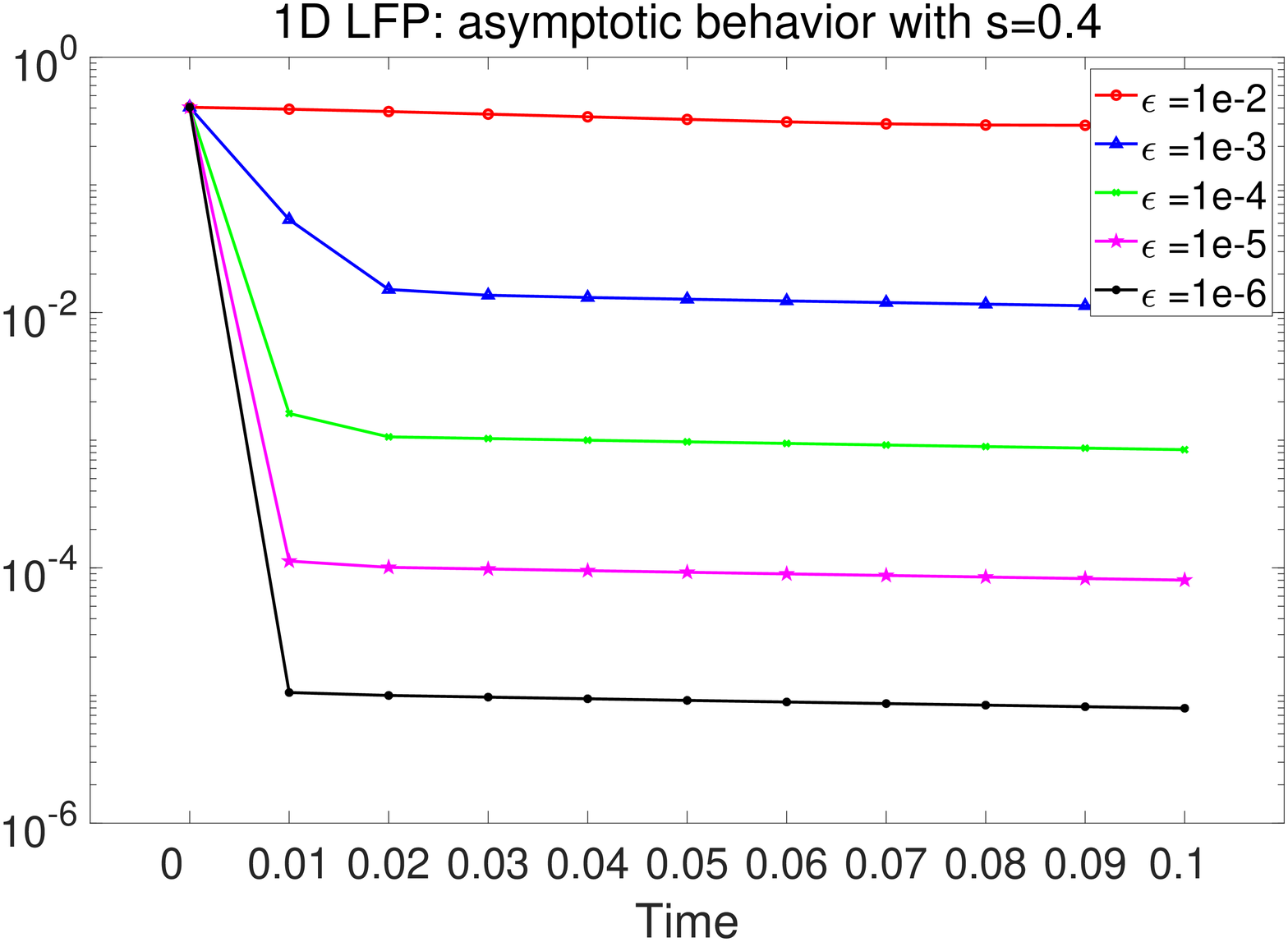}}
{\includegraphics[width=0.45\textwidth]{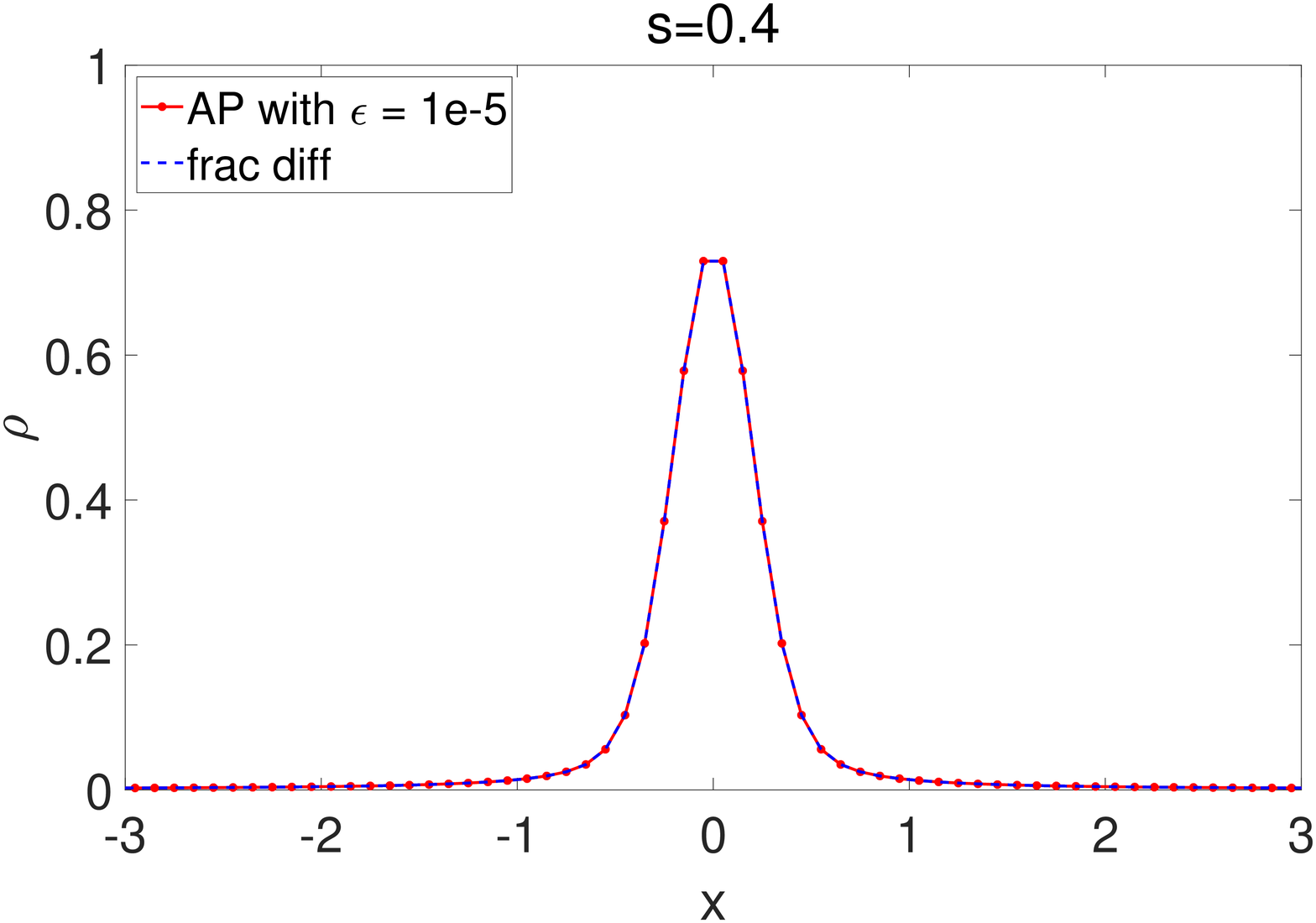}}
{\includegraphics[width=0.45\textwidth]{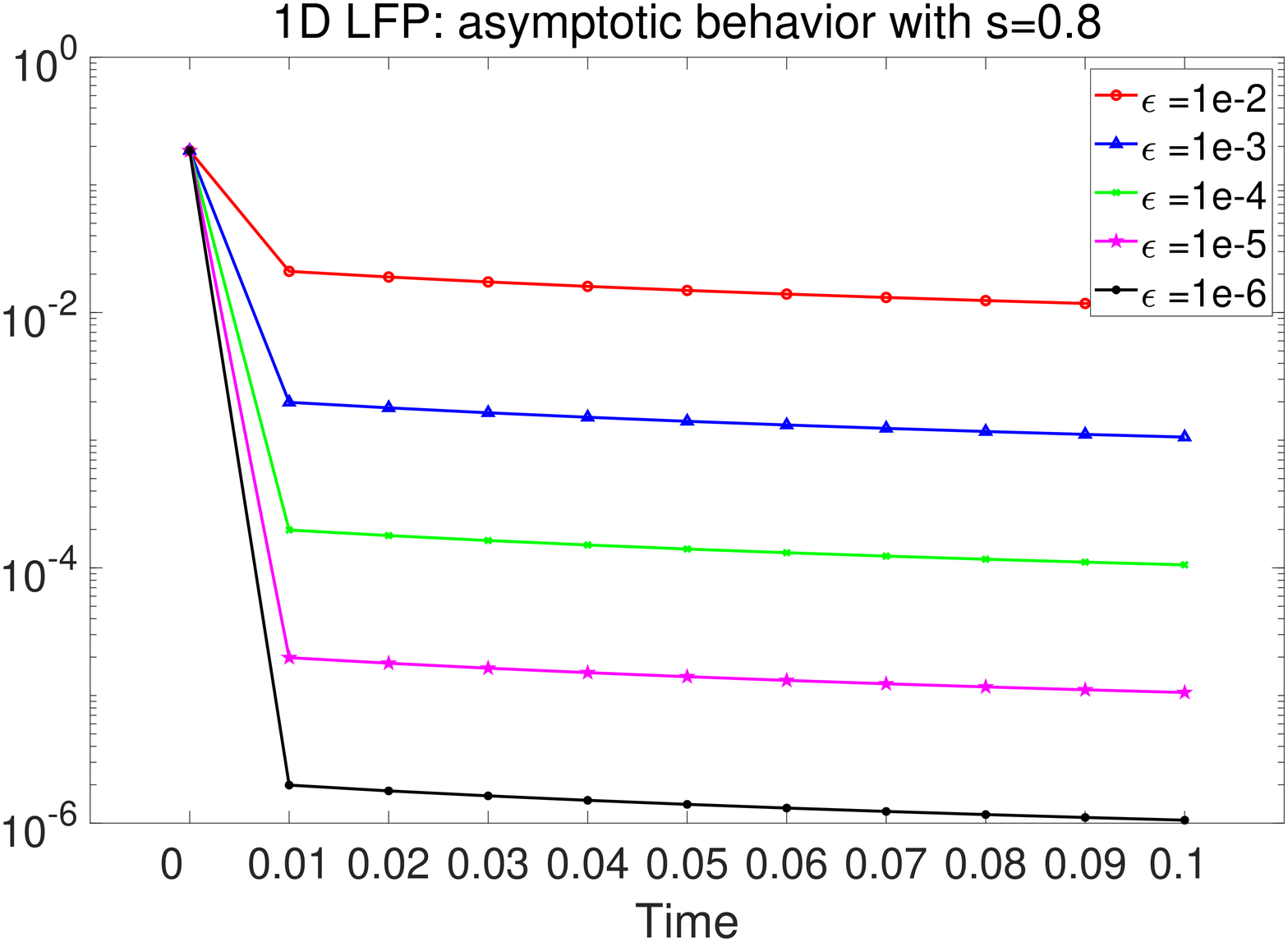}}
{\includegraphics[width=0.45\textwidth]{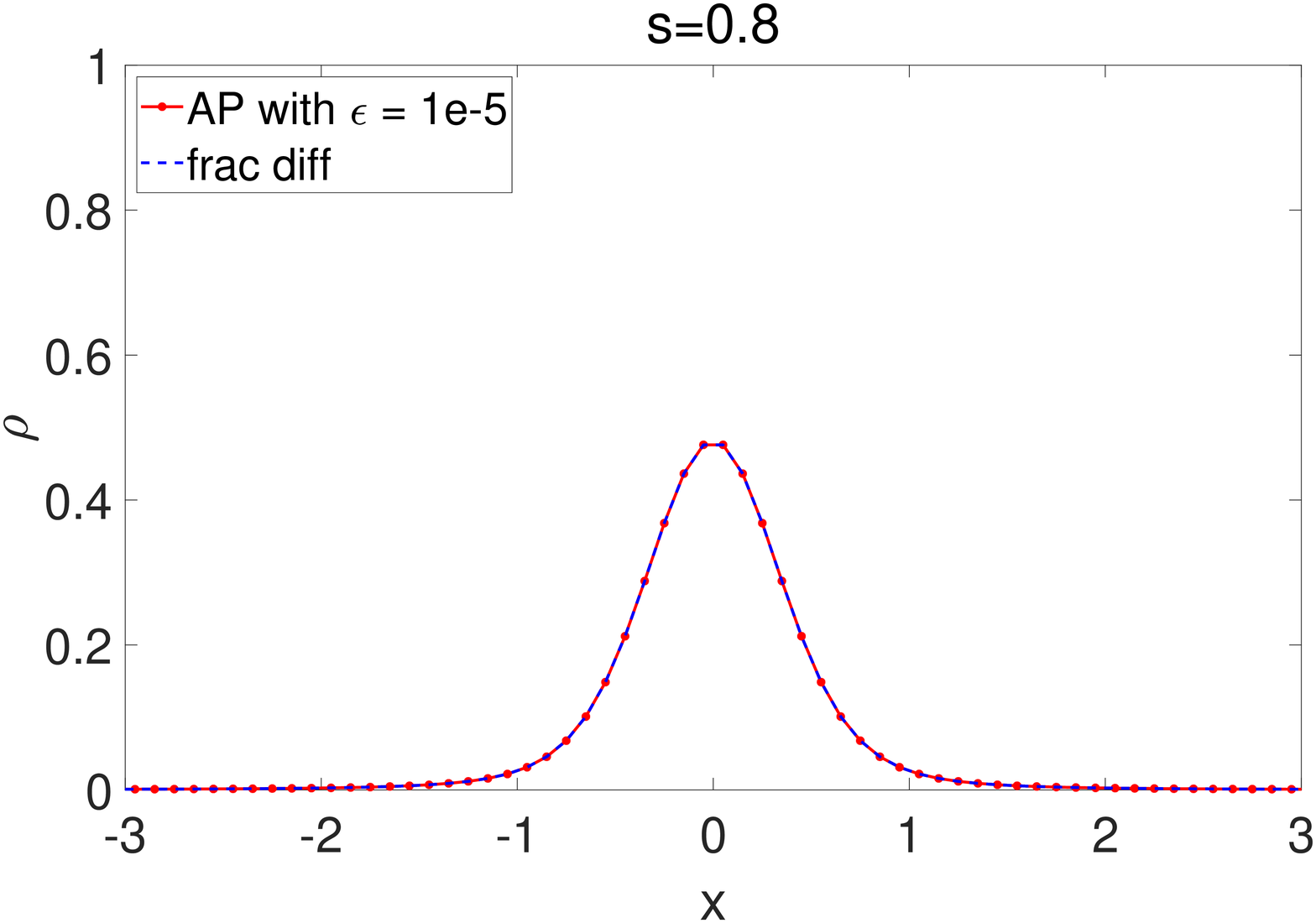}}
\caption{Computation of \eqref{eqn:111} with initial condition \eqref{eqn: IC2}. Left: asymptotic error \eqref{APerror} versus time. Right: plot of solution at $T=0.1$. $L_x = 5$, $N_x=100$, $\Delta t = 0.01$ are used in both AP scheme and the Fourier spectral method in computing $\eqref{eqn: limit_system}$. $N_v=128$, $L_v=3$ are used for velocity variable.}
\label{fig: ap_check_IC_2}
\end{figure}

\section{Conclusion}
We designed an asymptotic preserving scheme for L\'evy-Fokker-Planck equation with fractional diffusion limit. This limit emerges due to the fat tail equilibrium of the L\'evy-Fokker-Planck operator, which breaks down the classical diffusion limit as it renders the diffusion matrix unbounded. Similar anomalous diffusion was considered for the linear Boltzmann case \cite{mellet2011fractional, abdallah2011fractional}, for which asymptotic preserving schemes have been designed \cite{wang2016asymptotic, crouseilles2016numerical, CHL16, wang2019asymptotic}. 
Comparing to the linear Boltzmann case, there are two major difficulties here in constructing numerical schemes. One is that the fat tail equilibrium does not appear explicitly in the collision operator, but exits implicitly as the kernel of the collision operator. Therefore, the idea of truncating the infinite domain into a finite computational one with a tail compensation can not be directly apply, as the tail behavior is not known unless the solution reaches the equilibrium. The other comes from the derivation of the fractional diffusion limit. In the linear Boltzmann case, a reshuffled Hilbert expansion is performed to show the strong convergence of the kinetic equation to the anomalous diffusion limit and it is the building block of the design of AP scheme. In contrast, only a weak convergence is known for our case. To resolve the first difficulty, we adopt a pseudo spectral method based on rational Chebyshev polynomial, which transforms an infinite domain into a finite one, and therefore no domain decomposition is needed. For the second difficulty, we propose a novel macro-micro decomposition, with a unique macro part that is inspired by the special choice of of the test function in proving the weak convergence. The stability of the split system is obtained. We also propose an operator splitting discretization to the split system, which removes the ill-posedness due to the stiffness and reduces the computational cost from a direct implicit treatment. A rigorous asymptotic preserving property of our scheme is established.  Several numerical results are carried out to demonstrate the properties of our scheme, including asymptotic-preservation, uniform accuracy and energy dissipation.

\vspace{1cm}

{\bf Acknowledgment}: This work is partially supported by NSF grant DMS-1846854. L.W. would like to thank Dr. Min Tang and Dr. Jingwei Hu for the discussion on computing the fractional Laplacian operator.

\bibliography{reference}
\bibliographystyle{siam}

\end{document}